\newtheorem{lemma}{Lemma}
\newtheorem{theorem}{Theorem}
\newtheorem{prop}{Proposition}
\newtheorem*{cor}{Corollary}
\theoremstyle{remark}
\newtheorem{remark}{Remark}[section]
\title{Discrete maximal operators over surfaces of higher codimension}
\author[T.C. Anderson]{Theresa C. Anderson}
\address{Department of Mathematics, Purdue University, 150 N. University Street, West Lafayette, IN 47906}
\email{tcanderson@purdue.edu}
\author[A.V. Kumchev]{Angel V. Kumchev}
\address{Department of Mathematics, Towson University, 8000 York Road, Towson, MD 21252}
\email{akumchev@towson.edu}
\author[E.A. Palsson]{Eyvindur A. Palsson}
\address{Department of Mathematics, Virginia Tech, 225 Stanger Street, Blacksburg, VA 24061}
\email{palsson@vt.edu}
\date{\today}
\numberwithin{equation}{section}
\newcommand{\dalpha}{\mathrm d\bm\alpha}
\newcommand{\dbeta}{\mathrm d\bm\beta}
\newcommand{\eps}{\varepsilon}
\DeclareMathOperator{\lcm}{lcm}
\begin{document}

\begin{abstract}
Integration over curved manifolds with higher codimension and, separately, discrete variants of continuous operators, have been two important, yet separate themes in harmonic analysis, discrete geometry and analytic number theory research. Here we unite these themes to study discrete analogues of operators involving higher (intermediate) codimensional integration.  We consider a maximal operator that averages over triangular configurations and prove several bounds that are close to optimal. A distinct feature of our approach is the use of multilinearity to obtain nontrivial $\ell^1$-estimates by a rather general idea that is likely to be applicable to other problems. 
\end{abstract}

\maketitle

\section{Introduction}

Operators involving integration along a curved smooth manifold have been a central theme in harmonic analysis and related fields.  Curvature adds subtlety to the analysis of such operators; for example, celebrated bounds for the spherical maximal function by Stein \cite{Stein76} and Bourgain \cite{Bour85} are significantly more delicate than the respective bounds for the classical Hardy--Littlewood maximal operator on Euclidean space. Operators involving integration over a curved manifold of codimension 1 or $d-1$ in $\mathbb R^d$ have been extensively studied in a variety of contexts, already providing a wide range of challenges; see for example \cite{TaoWright} and the references therein. When the integration involves a manifold of intermediate codimension, the analysis becomes even more intricate and involved and the problem of bounding such operators turns into a much more difficult problem. It is therefore not surprising that results for such operators are more scarce in the literature.

Another area of extensive study involves discrete variants of continuous operators. Initiated by work of Bourgain \cite{Bourgain_ergodic} in ergodic theory, research in this direction has continued to evolve into a standalone subfield of harmonic analysis following the pivotal work of Magyar, Stein and Wainger \cite{MSW}, where they considered the discrete analogue of the spherical maximal function. Several authors have proved maximal and/or improving inequalities for discrete operators over lattice points on surfaces of arithmetic interest; see \cite{ACHK2, ACHK, BMSW, Cook19, HeHu19, Hugh17a, Hugh17b, Ione04, KeLa20, Magy97, Magy02, Pierce_mean_values} for some such results. A distinctive feature of such work is the interplay between analysis and number theory, as the arithmetic properties of the underlying discrete set play a central role when the analogous continuous operator involves curvature. Indeed, in almost all cases, even the asymptotics for the size of the underlying set of lattice points lead to number-theoretic problems with a long and rich history.    

In this paper, we consider a problem that belongs to both of these bodies of research. We study a discrete averaging operator, where we average over equilateral triangles with vertices in $\mathbb Z^d$: namely,
\begin{equation}\label{eqi.1}
(\#\mathcal V_\lambda)^{-1} \sum_{(\mathbf{u,v}) \in \mathcal V_\lambda} f(\mathbf x-\mathbf u)g(\mathbf x-\mathbf v),
\end{equation}
where the summation is over the point set
\begin{align*}
\mathcal V_\lambda &= \big\{ (\mathbf{u, v}) \in \mathbb Z^d \times \mathbb Z^d : |\mathbf u|^2 = |\mathbf v|^2 = |\mathbf u - \mathbf v|^2 = \lambda \big\} \\
&= \big\{ (\mathbf{u, v}) \in \mathbb Z^d \times \mathbb Z^d : |\mathbf u|^2 = |\mathbf v|^2 = 2\mathbf u\cdot \mathbf v = \lambda \big\}, 
\end{align*} 
$| \cdot |$ being the Euclidean norm on $\mathbb R^d$. It is clear from the second representation of $\mathcal V_\lambda$ that $\mathcal V_\lambda = \varnothing$ for odd $\lambda$. On the other hand, when $\lambda$ is a large even integer and the dimension $d$ is not too small, one expects that $\#\mathcal V_\lambda \asymp \lambda^{d-3}$. This bound certainly holds in the dimensions we consider, for example from the results of Raghavan \cite{Ragh59} (or from Theorem~\ref{thm2} below). Thus, we may replace the operator \eqref{eqi.1} with 
\begin{equation}\label{eqi.2}
T_\lambda(f,g)(\mathbf x) = \lambda^{3-d} \sum_{(\mathbf{u,v}) \in \mathcal V_\lambda} f(\mathbf x-\mathbf u)g(\mathbf x-\mathbf v),
\end{equation}
which is slightly more convenient to work with. 

The motivation for studying this particular operator comes from point configuration questions that generalize the Erd\H{o}s distance problem and its continuous analogue, the Falconer distance problem. In the continuous setting, specific bounds on such averaging operators have been used to establish Falconer type theorems for triangles \cite{GI12,GGIP15}, as well as having been studied independently \cite{PS20}. In the setting of $\mathbb{Z}^d$, a precursor to the operator we study appeared in the work of Magyar \cite{Magy09}, where he established a Ramsey type theorem for simplices by building on his earlier work for distances \cite{Magy08}.  We also mention that we have recently learned of forthcoming related independent work \cite{CLM}.

Our main results---Theorem \ref{thm1} below and its corollary---establish that the corresponding maximal operator is bounded from $\ell^p(\mathbb Z^d) \times \ell^q(\mathbb Z^d)$ to $\ell^r(\mathbb Z^d)$ for a range of choices for $p,q,r$. To the best of our knowledge, these are the first examples of discrete maximal inequalities where the underlying continuous manifold has codimension greater than 1. In analogy with the classical theory of interpolation of operators between $L^p$ spaces, we say that a bounded operator $T$ that maps $\ell^{p}(\mathbb{Z}^d)\times\ell^{q}(\mathbb{Z}^d)$ into $\ell^{r}(\mathbb{Z}^d)$ is of \emph{type $(p,q; r)$}. In this terminology, we prove the following.

\begin{theorem}\label{thm1}
Let $d\geq 9$ and $p > p_0(d) = \max\big( \frac {32}{d+8}, \frac {d+4}{d-2} \big)$. Then the maximal operator
\[ T^*(f,g) = \sup_{\lambda \in \mathbb N} |T_\lambda(f,g)| \] 
is of type $(p, \infty; p)$.
\end{theorem}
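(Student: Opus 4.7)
The natural first move is to decouple $g$ from $f$ via the trivial bound $|g(\mathbf x-\mathbf v)| \le \|g\|_\infty$, yielding the pointwise domination
\[
T^*(f,g)(\mathbf x) \le \|g\|_\infty\cdot \sup_{\lambda\in\mathbb N}\big(K_\lambda*|f|\big)(\mathbf x),\qquad K_\lambda(\mathbf u) = \lambda^{3-d}N(\mathbf u,\lambda)\,\mathbf 1_{|\mathbf u|^2=\lambda},
\]
where $N(\mathbf u,\lambda) = \#\{\mathbf v\in\mathbb Z^d:(\mathbf u,\mathbf v)\in\mathcal V_\lambda\}$ is the fiber count. The asymptotic $\#\mathcal V_\lambda\asymp \lambda^{d-3}$ (Theorem~\ref{thm2}) gives $\sum_{\mathbf u}K_\lambda(\mathbf u)\asymp 1$, so $K_\lambda$ is a weighted probability measure on the lattice sphere $\{|\mathbf u|^2=\lambda\}$. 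The $(p,\infty;p)$-bound for $T^*$ therefore reduces to the linear discrete maximal inequality $\big\|\sup_\lambda|K_\lambda*f|\big\|_{\ell^p}\lesssim\|f\|_{\ell^p}$ for $p>p_0(d)$.

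Next I would analyze the Fourier multiplier
\[
\widehat K_\lambda(\boldsymbol\alpha) = \lambda^{3-d}\!\!\sum_{(\mathbf u,\mathbf v)\in\mathcal V_\lambda}\!e(-\boldsymbol\alpha\cdot\mathbf u)
\]
by the Hardy--Littlewood circle method with three dual parameters $\boldsymbol\beta=(\beta_1,\beta_2,\beta_3)\in[0,1)^3$ detecting the three defining quadratic constraints. After detection, the inner sum factors across coordinates into one-dimensional Gauss sums governed by the ternary form $\beta_1u^2+\beta_2v^2+2\beta_3uv$. Partition $[0,1)^3$ into major arcs $\mathfrak M$ and minor arcs $\mathfrak m$. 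On $\mathfrak M$, standard Gauss-sum/Gaussian-integral approximations express $\widehat K_\lambda^{\mathfrak M}$ as a singular-series arithmetic multiplier composed with a smooth multiplier approximating the Fourier transform of the uniform measure on the continuous sphere of radius $\lambda^{1/2}$ in $\mathbb R^d$. Magyar--Stein--Wainger transference then controls the major-arc maximal operator by combining Stein's spherical maximal theorem on $\mathbb R^d$ with $\ell^p$ bounds on the arithmetic factor (analyzed via Gauss sums attached to the ternary form); the restriction $p>(d+4)/(d-2)$ emerges from balancing these two pieces, and is stricter than the MSW threshold $d/(d-2)$ due to the triple-arc structure of the singular series.

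The minor-arc contribution is controlled by Weyl-type decay of the quadratic exponential sum, giving $|\widehat K_\lambda^{\mathfrak m}(\boldsymbol\alpha)|\lesssim \lambda^{-\delta}$ on $\mathfrak m$ for some $\delta>0$; combined with Plancherel on $\ell^2$ and the trivial $\ell^\infty$ bound, this yields via interpolation and dyadic summation the maximal bound in the range $p>32/(d+8)$. The main obstacle lies in the degeneracy of the ternary form, whose discriminant $\beta_1\beta_2-\beta_3^2$ vanishes along a codimension-one subvariety of $[0,1)^3$ where Weyl bounds afford no direct savings. Overcoming this requires a stratification of $\mathfrak m$ by proximity to the degenerate locus, together with moment-type estimates on each stratum. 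I expect this to be the arena of the multilinear $\ell^1$-technique advertised in the abstract: one reintroduces the bilinear structure of the original operator $T_\lambda$ at the analytic level---using the extra degree of freedom in $\mathbf v$ to average out the worst-case weights $N(\mathbf u,\lambda)$---to supply estimates the weighted linear operator cannot afford on its own. Once both arc contributions are assembled and controlled uniformly in $\lambda$, standard vector-valued/Littlewood--Paley maximal-function machinery yields the full $(p,\infty;p)$-bound.
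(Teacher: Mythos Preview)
Your overall architecture---reduce to the linear operator $T_\lambda f=T_\lambda(f,1)$ via $|g|\le\|g\|_\infty$, express the multiplier as a three-variable circle-method integral, and handle the major arcs by Magyar--Stein--Wainger transference to the continuous spherical maximal function---is exactly what the paper does. However, two of your key claims are incorrect, and together they leave a real gap.

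First, your attribution of the two thresholds is wrong. In the paper the major-arc main term $M_\lambda$ and the full major-arc piece $T_\lambda(f;\mathfrak M)$ are both controlled for every $p>d/(d-3)$ (Propositions~\ref{p3} and~\ref{p4}), which is strictly weaker than either component of $p_0(d)$; the major arcs are never the bottleneck. Both $32/(d+8)$ and $(d+4)/(d-2)$ arise from a \emph{single} interpolation inequality on the minor arcs (Proposition~\ref{p1}); which of the two is larger simply depends on whether $d\le 12$ or $d\ge 12$. The degeneracy of the discriminant $a_1a_3-a_2^2$ that you flag is real, but it is dispatched by an elementary divisor-sum estimate (Lemma~\ref{l2}) and is not where the multilinear idea enters.

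Second, your minor-arc plan---Plancherel on $\ell^2$ interpolated against the trivial $\ell^\infty$ bound---goes in the wrong direction: we need $p\le 2$, so the second endpoint must be $\ell^1$, not $\ell^\infty$. With the \emph{trivial} $\ell^1$ bound $\|T^*_{\Lambda,\mathfrak m}f\|_1\lesssim\Lambda^3\|f\|_1$ one obtains only $p>(d+16)/(d+4)$, as the paper explicitly notes. Reaching $p_0(d)$ requires a genuine power saving $\|T^*_{\Lambda,\mathfrak m}f\|_1\lesssim_\eps\Lambda^{3-\kappa+\eps}\|f\|_1$ with $\kappa=\min(d,12)/8$. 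The mechanism is not ``averaging out the fiber weights $N(\mathbf u,\lambda)$'' as you suggest; rather, on $\ell^1$ one dominates the maximal operator by
\[
\|f\|_1\cdot\Lambda^{3-d}\int_{\mathfrak m}\bigg(\sum_{|x|\le N}\Big|\sum_{|y|\le N}e(\alpha_3y^2+2\alpha_2xy)\Big|\bigg)^{d}\,\dalpha,
\]
and the inner sum over $y$ is itself a quadratic Weyl sum to which Vaughan's sharp approximations apply. The bilinearity thus enters as an \emph{extra exponential-sum variable} surviving at the $\ell^1$ endpoint, and the saving comes from summing that Weyl sum over the two-dimensional major arcs in $(\alpha_2,\alpha_3)$.
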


By symmetry, $T^*$ is of course also of type $(\infty, p; p)$. Interpolation between these two results shows that $T^*$ is of type $(p, q; r)$ whenever $r > p_0(d)$ and $\frac 1p + \frac 1q = \frac 1r$. Recalling that $\ell^p(\mathbb Z^d)$ spaces increase with $p$, we obtain the following corollary on the boundedness of~$T^*$.

\begin{cor}
Let $d \ge 9$ and $p_0(d)$ be as above. The maximal operator $T^*$ is of type $(p, q; r)$ whenever $r > p_0(d)$ and $1 \le p,q \le \infty$ with $\frac 1p + \frac 1q \ge \frac 1r$.
\end{cor}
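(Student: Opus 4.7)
The corollary is a formal consequence of Theorem~\ref{thm1}, and my plan is three steps: symmetrize the bilinear operator, apply bilinear interpolation at each target exponent $r>p_0(d)$, and then use the nesting of $\ell^p$-spaces on the counting-measure space $\mathbb Z^d$. Because the substantive analytic work is entirely contained in Theorem~\ref{thm1}, I do not expect any serious obstacle here; the only mild care is in how one justifies bilinear interpolation for the sublinear maximal object $T^{*}$.

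First I would observe that the summand in \eqref{eqi.2} is invariant under the involution $(\mathbf u,\mathbf v)\mapsto(\mathbf v,\mathbf u)$, which fixes $\mathcal V_\lambda$. Hence $T_\lambda(f,g)=T_\lambda(g,f)$, and consequently $T^{*}(f,g)=T^{*}(g,f)$. Reading Theorem~\ref{thm1} with $f$ and $g$ swapped therefore gives the companion bound
\[
\|T^{*}(f,g)\|_{\ell^p}\le C_p\|f\|_{\ell^\infty}\|g\|_{\ell^p},\qquad p>p_0(d),
\]
alongside the original estimate $\|T^{*}(f,g)\|_{\ell^p}\le C_p\|f\|_{\ell^p}\|g\|_{\ell^\infty}$.

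Next, fix any $r>p_0(d)$ and interpolate bilinearly between the two endpoint estimates at the common target exponent $r$. To reduce to a genuinely bilinear object, I would linearize by choosing an arbitrary function $\lambda:\mathbb Z^d\to\mathbb N$ and considering $\widetilde T_\lambda(f,g)(\mathbf x):=T_{\lambda(\mathbf x)}(f,g)(\mathbf x)$, which is bilinear in $(f,g)$, is pointwise dominated by $T^{*}(f,g)$, and inherits both endpoint bounds with constants uniform in~$\lambda$. Bilinear complex (Riesz--Thorin) interpolation with parameter $\theta\in[0,1]$ between $(r,\infty;r)$ and $(\infty,r;r)$ yields
\[
\|\widetilde T_\lambda(f,g)\|_{\ell^r}\le C_r\|f\|_{\ell^{P}}\|g\|_{\ell^{Q}},\qquad \tfrac{1}{P}=\tfrac{1-\theta}{r},\ \tfrac{1}{Q}=\tfrac{\theta}{r},
\]
so in particular $\tfrac{1}{P}+\tfrac{1}{Q}=\tfrac{1}{r}$. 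Taking the supremum over all linearizing $\lambda$ upgrades the bound to $T^{*}$ itself, proving the corollary in the equality case.

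To relax $\tfrac{1}{p}+\tfrac{1}{q}=\tfrac{1}{r}$ to $\tfrac{1}{p}+\tfrac{1}{q}\ge\tfrac{1}{r}$, I would invoke the inclusion $\ell^{a}(\mathbb Z^d)\hookrightarrow\ell^{b}(\mathbb Z^d)$ with $\|\cdot\|_{\ell^{b}}\le\|\cdot\|_{\ell^{a}}$ whenever $a\le b$, valid because the underlying measure is counting. Given $1\le p,q\le\infty$ with $\tfrac{1}{p}+\tfrac{1}{q}\ge\tfrac{1}{r}$, pick $\alpha\le 1/p$ and $\beta\le 1/q$ with $\alpha+\beta=1/r$ and $0\le\alpha,\beta\le 1$ (always possible under the stated hypothesis); setting $p'=1/\alpha\ge p$ and $q'=1/\beta\ge q$, the equality case applied to $(p',q';r)$ together with the embeddings $\|f\|_{\ell^{p'}}\le\|f\|_{\ell^{p}}$ and $\|g\|_{\ell^{q'}}\le\|g\|_{\ell^{q}}$ delivers the desired bound. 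This completes the outline.
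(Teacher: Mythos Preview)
Your proposal is correct and follows essentially the same three-step route sketched in the paper just before the corollary: symmetry to obtain the $(\infty,p;p)$ bound, bilinear interpolation at fixed target $r$ to reach the line $\tfrac1p+\tfrac1q=\tfrac1r$, and then the nesting $\ell^a\hookrightarrow\ell^b$ for $a\le b$ to pass to $\tfrac1p+\tfrac1q\ge\tfrac1r$. Your linearization of $T^*$ via a measurable selector $\lambda(\mathbf x)$ is a standard and appropriate way to make the interpolation step rigorous for the sublinear maximal operator, a detail the paper leaves implicit.
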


The full range of triples $(p,q,r)$ for which this corollary establishes the boundedness of $T^*$ is depicted on Figure \ref{fig1}. Each triple $(p,q,r)$ is represented by the point $(\frac 1p, \frac 1q, \frac 1r)$ in the unit cube. The corollary applies to all the triples $(p,q,r)$ for which the respective point lies in the displayed solid polyhedron, with exception of its top face (colored red).  

\begin{figure}\label{fig1}
\tdplotsetmaincoords{70}{110}
\begin{tikzpicture}[tdplot_main_coords]
\draw[very thick,dotted,->] (0,0,0) -- (6,0,0) node[anchor=east]{$\frac 1p$};
\draw[very thick,dotted,->] (0,0,0) -- (0,5.5,0) node[anchor=west]{$\frac 1q$};
\draw[very thick,dotted,->] (0,0,0) -- (0,0,4) node[anchor=west]{$\frac 1r$};
\fill[fill=gray!20!white, fill opacity=0.7] (0,5,3) -- (0,3,3) -- (0,0,0) -- (0,5,0) -- (0,5,3);
\fill[fill=gray!20!white, fill opacity=0.7] (5,0,3) -- (3,0,3) -- (0,0,0) -- (5,0,0) -- (5,0,3);
\draw[thick, dashed, fill=gray!20!white, fill opacity=0.7] (0,0,0) -- (0,3,3) -- (3,0,3) -- (0,0,0); 
\fill[fill=gray!20!white, fill opacity=0.9] (5,5,0) -- (0,5,0) -- (0,0,0) -- (5,0,0) -- (5,5,0);
\draw[thick, dashed] (5,0,0) -- (0,0,0) -- (0,5,0);
\fill[thin, fill=gray!20!white, fill opacity=0.7] (5,0,3) -- (5,0,0) -- (5,5,0) -- (0,5,0) -- (0,5,3) -- (5,5,3) -- (5,0,3);
\draw[thick, red, fill=red!10!white, fill opacity=0.7] (0,5,3) -- (0,3,3) -- (3,0,3) -- (5,0,3) -- (5,5,3) -- (0,5,3);
\draw[thick] (5,0,3) -- (5,0,0) -- (5,5,0) -- (0,5,0) -- (0,5,3) node[anchor=west] {$\big(0,1,\frac 1{p_0}\big)$};
\draw[thick] (5,5,3) -- (5,5,0);
\draw (0,3,3) node[anchor=south] {$\big(0,\frac 1{p_0},\frac 1{p_0}\big)$};
\draw (3,0,3) node[anchor=south east] {$\big(\frac 1{p_0},0,\frac 1{p_0}\big)$};
\end{tikzpicture}
\caption{Points $(\frac{1}{p},\frac{1}{q}, \frac 1r)$ with $T^{*}$ of type $(p, q; r)$.} 
\end{figure}
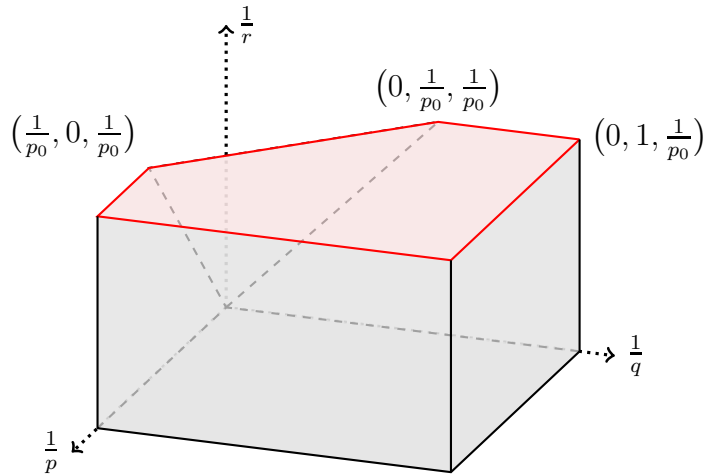

It is natural to ask how close these results are to being best possible. The condition $p > d/(d-3)$ appears at several places in our argument in ways that suggest that it may be a natural barrier for the problem. We have more to say about this, but we defer such discussion to the last section of the paper. If one accepts this restriction, however, and also insists that the range of $p$ include $p=2$, then the condition $d \ge 7$ on the dimension quickly emerges. 

We should point out that to reach the full strength of the results stated above we rely substantially on the multilinearity of the operator $T_\lambda$. In particular, unlike much of the existing work on discrete maximal operators, we leverage this multilinearity to obtain non-trivial $\ell^1$-bounds, which we combine with more traditional $\ell^2$-bounds. Without this idea, we would have to increase the value of $p_0(d)$ to $(d+16)/(d+4)$.  

\medskip

As in past work on discrete averages over surfaces of codimension 1, bounds on $\ell^2(\mathbb Z^d)$ play a central role in our arguments. To that end, we analyze the Fourier multiplier of $T_\lambda$, 
\[ \widehat{T_\lambda}(\bm\xi, \bm\eta) = \lambda^{3-d}\sum_{(\mathbf{u,v}) \in \mathcal V_\lambda} e(\bm\xi \cdot \mathbf u + \bm\eta \cdot \mathbf v), \] 
where $e(x) = e^{2\pi ix}$. Raghavan \cite{Ragh59} used the theory of Siegel modular forms to prove general results on simultaneous representations of integers by positive definite quadratic forms. Raghavan's work yields an asymptotic formula for $\widehat{T_\lambda} (\bm0, \bm0)$ when $d \ge 7$. His results were later improved on by Kitaoka in a series of papers during the 1980s. In particular, Kitaoka \cite{Kita88} showed that the asymptotic formula for $\widehat{T_\lambda} (\bm0, \bm0)$ holds when $d = 6$. The reader interested in this topic should see also important work of Hsia, Kitaoka, and Kneser \cite{HKK} and Ellenberg and Venkatesh \cite{EV} that uses $p$-adic methods. More recently, Dietmann and Harvey \cite{DiHa13} and Brandes \cite{Bran15} applied a version of the circle method pioneered by Davenport \cite{Dave59} and Birch \cite{Birc61} to generalize Raghavan's theorem to forms of arbitrary degree $k \ge 2$; their work gives an asymptotic formula for $\widehat{T_\lambda} (\bm0, \bm0)$ when $d \ge 13$ (see \cite[Theorem 1.1]{Bran15}). In this paper, we apply the Hardy--Littlewood circle method directly to the Diophantine equations defining $\mathcal V_\lambda$. This allows us to make use of moment estimates for exponential sums and to extend Raghavan's asymptotic to the general multiplier $\widehat{T_\lambda}(\bm\xi, \bm\eta)$ for all $d \ge 7$. More importantly, when $d \ge 9$, we are able to leverage our approximation for the multiplier to an approximation for the operator in $\ell^p(\mathbb Z^d)$, $p > p_0(d)$. 

In order to state our approximation results, we need to introduce some notation. For vectors $\mathbf{x, y} \in \mathbb R^k$, we write
\[ s(\mathbf x) = x_1 + x_2 + \dots + x_k, \quad  \bm\phi(\mathbf{ x,y}) = \big( |\mathbf x|^2, 2\mathbf x \cdot \mathbf y, |\mathbf y|^2  \big). \]
When $q, m, n \in \mathbb N$, $\mathbf a \in \mathbb Z^3$, $\bm\alpha \in \mathbb T^3$, $\xi,\eta \in \mathbb T$, we define
\begin{gather}
g(q; \mathbf a, m, n) = q^{-2} \sum_{r=1}^q \sum_{s=1}^q e_q( \mathbf a \cdot \bm\phi(r,s) + mr + ns), \label{eq1.3}\\
V_N(\bm\alpha; \xi, \eta) = \int_{-N}^N \int_{-N}^N e(\bm\alpha \cdot \bm\phi(x,y) + \xi x + \eta y) \, \mathrm dx\mathrm dy, \label{eq1.4}
\end{gather}
where $e_q(x) = e(x/q)$. Finally, we fix a smooth cutoff function $\Phi$ on $\mathbb R^d$ so that $\Phi(\bm\xi) = 1$ when $\max_j |\xi_j| \le \frac 18$ and $\Phi(\bm\xi) = 0$ when $\max_j |\xi_j| \ge \frac 14$. 

The next theorem states our asymptotic formula for the multiplier $\widehat{T_\lambda}(\bm\xi, \bm\eta)$. While we do not need this result directly in the proof of Theorem \ref{thm1}, such approximations are of independent interest: see \cite{ACHK, Cook19, Hugh17a, Magy02, MSW}. We include this theorem here, since its proof requires little work beyond what is needed to prove our main results. 

\begin{theorem}\label{thm2}
Let $d \ge 7$ and $\lambda \in \mathbb N$ be large. Then, for all $\bm\xi, \bm\eta \in \mathbb R^d$ and any fixed $\eps > 0$, one has
\begin{equation}\label{eqi.3}
\widehat{T_\lambda}(\bm\xi, \bm\eta) = \sum_{q=1}^{\infty} \sum_{\mathbf{m,n} \in \mathbb Z^d} G_\lambda(q; \mathbf{m,n})\Phi_q(\bm\xi_{q,\mathbf m})\Phi_q(\bm\eta_{q,\mathbf n}) I_\lambda(\bm\xi_{q,\mathbf m}, \bm\eta_{q,\mathbf n}) + O_\eps\big(\lambda^{-1/14 + \eps}\big),     
\end{equation} 
the series on the right being absolutely convergent. Here, $\Phi_q(\bm\xi) = \Phi(q\bm\xi)$, $\bm\xi_{q,\mathbf m} = \bm\xi - q^{-1}\mathbf m$, 
\begin{gather*}
G_\lambda(q; \mathbf{m,n}) = \sum_{\substack{1 \le \mathbf a \le q\\ (q,a_1,a_2,a_3) = 1}} e_q(-\lambda s(\mathbf a)) \prod_{j=1}^d g(q; \mathbf a, m_j, n_j), \\
I_\lambda(\bm\xi, \bm\eta) = \int_{\mathbb R^3} \bigg\{ \prod_{j=1}^d V_1 \big( \bm\beta; \lambda^{1/2} \xi_j, \lambda^{1/2} \eta_j \big) \bigg\} e(-s(\bm\beta)) \, \dbeta.
\end{gather*}
Moreover, if $\lambda$ is even, one has
\[ 1 \lesssim \widehat{T_\lambda}(\bm0, \bm0) \lesssim 1. \]
\end{theorem}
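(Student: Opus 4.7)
The plan is to apply the Hardy--Littlewood circle method directly to the system of three Diophantine equations $|\mathbf u|^2 = |\mathbf v|^2 = 2\mathbf u \cdot \mathbf v = \lambda$ defining $\mathcal V_\lambda$. Using orthogonality on $\mathbb T^3$, one writes
\[ \widehat{T_\lambda}(\bm\xi, \bm\eta) = \lambda^{3-d} \int_{\mathbb T^3} \bigg\{ \prod_{j=1}^d f_\lambda(\bm\alpha; \xi_j, \eta_j) \bigg\} e(-\lambda s(\bm\alpha)) \, \dalpha, \]
where the factorization across coordinates $j$ uses $\bm\phi(\mathbf u, \mathbf v) = \sum_j \bm\phi(u_j, v_j)$, and
\[ f_\lambda(\bm\alpha; \xi, \eta) = \sum_{|u|, |v| \le \sqrt\lambda} e\big(\bm\alpha \cdot \bm\phi(u, v) + \xi u + \eta v\big). \]
I then introduce a Farey dissection of $\mathbb T^3$: major arcs $\mathfrak M(Q)$ as neighborhoods of radius $\asymp Q \lambda^{-1}$ around rationals $\mathbf a/q$ with $q \le Q$ and $(q, a_1, a_2, a_3) = 1$, and minor arcs $\mathfrak m(Q)$ as the complement, for a parameter $Q$ to be optimized.

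The minor arc estimate is where the exponent $-1/14$ arises, and I expect this to be the main obstacle. Weyl differencing applied to the two-variable quadratic form inside $f_\lambda$ yields a pointwise bound $|f_\lambda(\bm\alpha; \xi, \eta)| \ll \lambda^{1-\delta+\eps}$ on $\mathfrak m(Q)$, uniformly in the linear parameters $\xi, \eta$, with savings $\delta > 0$ depending on $Q$. Combined with the $L^2$ estimate $\int_{\mathbb T^3} |f_\lambda|^2 \, \dalpha \ll \lambda$ (obtained by counting diagonal pairs), bounding $d - 2$ factors of $F_\lambda = \prod_j f_\lambda(\cdot; \xi_j, \eta_j)$ pointwise and two via Cauchy--Schwarz yields
\[ \int_{\mathfrak m} |F_\lambda| \, \dalpha \ll \big(\sup_{\mathfrak m} |f_\lambda|\big)^{d-2} \cdot \lambda. \]
A careful choice of $Q$ balancing the Weyl savings against the measure of $\mathfrak m$, combined if needed with higher-moment estimates, produces a minor arc bound $O(\lambda^{d-3-1/14+\eps})$ when $d \ge 7$, which becomes the claimed $O(\lambda^{-1/14+\eps})$ after multiplication by $\lambda^{3-d}$.

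On the major arcs, I write $\bm\alpha = \mathbf a/q + \bm\beta$ with $|\bm\beta| \ll Q \lambda^{-1}$ and insert the smooth partition provided by the cutoffs $\Phi_q(\bm\xi - \mathbf m/q)$ and $\Phi_q(\bm\eta - \mathbf n/q)$ to localize each linear parameter near a rational with denominator $q$. Completion of sums---writing $u = qu' + r$, $v = qv' + r'$, and approximating the smooth factor by an integral---factors $f_\lambda(\bm\alpha; \xi_j, \eta_j)$ as the Gauss sum $g(q; \mathbf a, m_j, n_j)$ times an oscillatory integral that, after rescaling $\bm\beta \mapsto \lambda^{-1}\bm\beta$, produces the $V_1$ factor in $I_\lambda$. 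Summing over $\mathbf a \bmod q$ with $(q, a_1, a_2, a_3) = 1$ yields $G_\lambda(q; \mathbf m, \mathbf n)$, and extending the $\bm\beta$-integral from $|\bm\beta| \ll Q$ to all of $\mathbb R^3$ contributes only a negligible tail. Absolute convergence of the resulting series follows from standard Gauss sum bounds on $G_\lambda(q; \mathbf m, \mathbf n)$ together with rapid decay of $I_\lambda$ in the frequency variables.

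For the second claim, the upper bound $\widehat{T_\lambda}(\bm 0, \bm 0) \lesssim 1$ is immediate from the asymptotic formula. The lower bound requires showing the main term $\mathfrak S(\lambda) \cdot I_\lambda(\bm 0, \bm 0) \gtrsim 1$ for even $\lambda$, where $\mathfrak S(\lambda) = \sum_q G_\lambda(q; \mathbf 0, \mathbf 0)$ factors as an Euler product $\prod_p \sigma_p(\lambda)$ of local densities. For $d \ge 7$ and $\lambda$ even---the latter being exactly the condition for solubility of $2\mathbf u \cdot \mathbf v \equiv \lambda \pmod 2$---Hensel's lemma at odd primes and direct residue counting at $p = 2$ yield $\sigma_p(\lambda) \gtrsim 1$ uniformly, while $\sigma_p(\lambda) = 1 + O(p^{-d/2+1})$ for $p \nmid \lambda$ ensures the Euler product converges to a positive constant. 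Positivity of the singular integral $I_\lambda(\bm 0, \bm 0)$ follows by a change of variables reducing it to a non-degenerate oscillatory integral.
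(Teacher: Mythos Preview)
Your overall architecture matches the paper's: circle method on $\mathbb T^3$, Weyl-type pointwise bound on the minor arcs, standard Gauss-sum-times-integral approximation on the major arcs, and an Euler product argument for the lower bound. The major-arc and singular-series portions of your sketch are essentially what the paper does.

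The genuine gap is in your minor-arc estimate. With $N = \lambda^{1/2}$ and the Weyl bound $\sup_{\mathfrak m}|f_\lambda| \lesssim_\eps N^{2+\eps}Q^{-1/2}$, the constraint $Q \le N^{2/7}$ (needed so that the error in the local approximation on the intermediate arcs $\mathfrak N \setminus \mathfrak M$ does not dominate) limits you to $\sup_{\mathfrak m}|f_\lambda| \lesssim N^{13/7+\eps}$. Your proposed second-moment bound $\int_{\mathbb T^3}|f_\lambda|^2 \, \dalpha \ll \lambda$ then gives, for $d=7$,
\[
\lambda^{3-d}\int_{\mathfrak m}|F_\lambda|\,\dalpha \;\lesssim\; \lambda^{-4}\big(N^{13/7}\big)^{5}\cdot \lambda \;=\; \lambda^{23/14},
\]
which diverges rather than decaying like $\lambda^{-1/14}$. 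No rebalancing of $Q$ rescues this, because the pointwise saving is capped at $N^{-1/7}$. What is actually required is a \emph{sixth} moment estimate
\[
\int_{\mathbb T^3} |f_\lambda(\bm\alpha;\xi,\eta)|^6 \, \dalpha \;\lesssim_\eps\; N^{6+\eps}
\]
uniformly in $\xi,\eta$; using this on six factors and the pointwise bound on the remaining $d-6$ yields exactly $N^{(6-d)/7+\eps} \le \lambda^{-1/14+\eps}$ for $d \ge 7$. This sixth-moment bound is not elementary: it amounts to counting solutions of a $3\times 3$ quadratic system and in the paper is deduced from a result of Bourgain and Demeter on representations by ternary quadratic forms (their Siegel-mass-formula preprint). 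Your phrase ``combined if needed with higher-moment estimates'' is hiding precisely this nontrivial input, and without it the argument does not close for any $d$ in the stated range.
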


Before we state our main approximation to $T_\lambda$, we pause for a moment to observe that since
\begin{equation}\label{eqi.4}
T^*(f,g) \leq \|g\|_{\infty}\cdot T^*(|f|,1),
\end{equation}
we may, for the proof of Theorem \ref{thm1}, focus on the restriction of $T_\lambda$ to its first argument, 
\[ T_\lambda f = T_\lambda(f,1). \]
In particular, we establish our main approximation formula, given by the next theorem, for $T_\lambda f$ only.

\begin{theorem}[Approximation formula]\label{thm3}
Let $d\geq 9$ and $p > p_0(d)$. When $\lambda\in\mathbb{N}$, one has 
\begin{equation}\label{eqi.5} 
T_\lambda f = M_\lambda f + E_\lambda f, 
\end{equation}
where: 
\begin{itemize}
\item [(i)] $M_\lambda$ is the convolution operator with Fourier multiplier 
\[ \widehat{M_\lambda}(\bm\xi) = c_d\sum_{q=1}^{\infty} \sum_{\mathbf m \in \mathbb Z^d} G_\lambda(q; \mathbf{m,0})\Phi(q\bm\xi - \mathbf m) \widetilde{\mathrm dS}\big( \lambda^{1/2}(\bm\xi - q^{-1}\mathbf m) \big), \] 
with $c_d > 0$ and $\widetilde{\mathrm dS}(\bm\xi)$ being the Fourier transform of the Euclidean surface measure on the unit sphere in $\mathbb R^d$ (see \eqref{eq2.16} below).
\item [(ii)] There exists an exponent $\delta_p = \delta_p(d) > 0$ such that the error term operator $E_\lambda$ satisfies the maximal inequality
\begin{equation}\label{eqi.6}
\Big\| \sup_{\lambda \in [\Lambda/2,\Lambda)}|E_\lambda f| \Big\|_p \lesssim_\eps \Lambda^{-\delta_p + \eps} \| f \|_p
\end{equation}
for any fixed $\eps > 0$; in particular, one can choose $\delta_2 = \min\big( \frac 14, \frac 1{8}(d-8) \big)$. 
\end{itemize}
\end{theorem}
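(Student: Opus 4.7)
The plan is to derive the decomposition $T_\lambda = M_\lambda + E_\lambda$ by applying the Hardy--Littlewood circle method to the three quadratic equations that define $\mathcal V_\lambda$, and then to combine an $\ell^2$ estimate for $E_\lambda$ with a non-trivial bilinear $\ell^1$-bound in order to reach the full range $p > p_0(d)$. Starting from the standard identity
\[
\mathbf 1\big[\bm\phi(\mathbf u,\mathbf v) = \lambda\mathbf 1\big] = \int_{\mathbb T^3} e\bigl(\bm\alpha\cdot\bm\phi(\mathbf u,\mathbf v) - \lambda s(\bm\alpha)\bigr)\,\dalpha,
\]
I would express $\widehat{T_\lambda}(\bm\xi,\bm0)$ as $\lambda^{3-d}\int_{\mathbb T^3}\prod_{j=1}^d F(\bm\alpha;\xi_j,0)\,e(-\lambda s(\bm\alpha))\,\dalpha$, where $F(\bm\alpha;\xi,\eta)$ is a smoothly truncated two-dimensional Weyl sum in $(u,v)$ with phase $\bm\alpha\cdot\bm\phi(u,v)+\xi u+\eta v$. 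Setting $Q_0 = \lambda^\theta$ for a small $\theta > 0$, I would dissect $\mathbb T^3$ into Farey major arcs $\mathfrak M(q,\mathbf a) \subset \{|\bm\alpha - \mathbf a/q| \le Q_0\lambda^{-1}\}$ with $q \le Q_0$, and their complement $\mathfrak m$. On each major arc, Poisson summation in $u$ and $v$ factorizes $F(\mathbf a/q + \bm\beta;\xi_j,0)$ as the Gauss sum $g(q;\mathbf a,m_j,0)$ times the oscillatory integral $V_N(\bm\beta;\lambda^{1/2}\xi_j,0)$ with a small error; reassembling over $(q,\mathbf a)$, extending the $\bm\beta$-integration to $\mathbb R^3$, and identifying $I_\lambda(\bm\xi,\bm 0)$ with $c_d\,\widetilde{\mathrm dS}(\lambda^{1/2}\bm\xi)$ (by evaluating each inner $v_j$-integral as a Fourier transform against the surface measure on the unit sphere in $\mathbb R^d$) recovers the multiplier of $M_\lambda$ declared in item~(i).

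For the minor arcs, a classical Weyl bound applied to a Dirichlet approximant with denominator larger than $Q_0$ yields $|F(\bm\alpha;\xi,\eta)| \lesssim \lambda^{1-\sigma+\eps}$ for a fixed $\sigma > 0$; inserting this bound into $d - s$ of the factors and handling the remaining $s$ factors by a Hua-type mean value estimate produces the pointwise multiplier bound $|\widehat{T_\lambda^{\mathfrak m}}(\bm\xi,\bm 0)| \lesssim \lambda^{-\delta_2 + \eps}$, with $\delta_2 = \min\bigl(\tfrac14,\tfrac18(d-8)\bigr)$ matching the constant claimed in the theorem. A parallel analysis of the major-arc truncation errors (tails in $q$, truncation of $\bm\beta$, and the Poisson error) gives the same $\ell^2$ operator-norm decay for $E_\lambda^{\mathfrak M}$. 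To pass from the single-scale Plancherel bound to the maximal inequality over $\lambda \in [\Lambda/2,\Lambda)$, I would run a square-function argument in the spirit of Magyar--Stein--Wainger: the arithmetic (Gauss-sum) part of the multiplier is bounded by a maximal inequality on the dual lattice, while the analytic (oscillatory) part is treated by a continuous variation estimate, producing $\|\sup_\lambda|E_\lambda f|\|_2 \lesssim \Lambda^{-\delta_2+\eps}\|f\|_2$.

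The extension to $p > p_0(d)$ is obtained by interpolating this maximal $\ell^2$ bound against a non-trivial bilinear $\ell^1$-estimate. The naive bound $\|E_\lambda(f,1)\|_1 \lesssim \|f\|_1$ carries no decay in $\lambda$, so one has to exploit the bilinear structure of $T_\lambda(f,g)$ directly: rewriting the kernel of $T_\lambda$ as an iterated summation of $f$ over the codimension-two affine spheres parametrized by the partner variable $\mathbf v$ and taking advantage of this inner-shell structure, I plan to establish a bound of the form $\|\sup_\lambda|E_\lambda(f,g)|\|_1 \lesssim \Lambda^{-\eta+\eps}\|f\|_1\|g\|_\infty$ with some $\eta > 0$. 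Interpolating this estimate with the maximal $\ell^2$ bound from the previous paragraph then yields a positive $\delta_p$ for every $p > p_0(d)$, as required in item~(ii).

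I expect this last step to be the main obstacle. A direct $\ell^1$ bound simply propagates the trivial normalization, so saving a power of $\lambda$ requires reorganizing the sum over $\mathcal V_\lambda$ as an iterated sum in which the inner sum, over lattice points on a codimension-two affine sphere, can be controlled in a norm stronger than $\ell^\infty$; tracking how this reorganization interacts with the minor-arc Weyl bound and how the resulting interpolation exponent matches the threshold $p_0(d) = \max\bigl(\tfrac{32}{d+8},\tfrac{d+4}{d-2}\bigr)$ is the most delicate part of the argument.
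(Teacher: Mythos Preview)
Your broad plan---circle method, major/minor arc split, an $\ell^2$ bound for the error combined with a non-trivial bilinear $\ell^1$ bound, and interpolation---matches the paper. But two of your intermediate steps are set up in a way that would not close as written.

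\textbf{The maximal function for the error.} You propose to get a pointwise multiplier bound $|\widehat{T_\lambda^{\mathfrak m}}(\bm\xi)|\lesssim\lambda^{-\delta_2+\eps}$ and then upgrade to the dyadic maximal inequality by ``a square-function argument in the spirit of Magyar--Stein--Wainger,'' splitting the multiplier into arithmetic and analytic parts. That MSW machinery applies to $M_\lambda$, whose multiplier has the product structure (Gauss sum)$\times$(surface-measure transform); it does not apply to the minor-arc contribution, which has no such factorization. A bare square function over $\lambda\in[\Lambda/2,\Lambda)$ costs $\Lambda^{1/2}$ and destroys the saving. The paper avoids this entirely: since the multiplier is $\lambda^{3-d}\int_{\mathfrak m}\mathcal F_N(\bm\alpha;\bm\xi)e(-\lambda s(\bm\alpha))\,\dalpha$ with the $\lambda$-dependence confined to a unimodular factor, one has $\sup_\lambda|T_\lambda(f;\mathfrak m)(\mathbf x)|\le\Lambda^{3-d}\int_{\mathfrak m}|(\mathcal F_N(\bm\alpha;\cdot))^\vee*f(\mathbf x)|\,\dalpha$ pointwise, and then Minkowski plus Plancherel give the maximal $\ell^2$ norm bounded by $\Lambda^{3-d}\int_{\mathfrak m}\sup_{\bm\xi}|\mathcal F_N(\bm\alpha;\bm\xi)|\,\dalpha$ (this is the paper's Lemma~\ref{l10}). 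The same device handles the major-arc error in Proposition~\ref{p2}. You should replace the square-function step by this Minkowski trick. Note also that the integral this produces carries a supremum over $\bm\xi$ \emph{inside} the $\bm\alpha$-integral; to feed it into a mean-value estimate one needs the sup-removal Lemma~\ref{l11}, and the mean value actually used is the Bourgain--Demeter bound $J_{4,2,2}(N)\lesssim_\eps N^{8+\eps}$, not a classical Hua inequality.

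\textbf{The $\ell^1$ mechanism and the interpolation structure.} Your description of the $\ell^1$ bound in terms of ``codimension-two affine spheres'' is not how the saving is obtained. Because $g=1$, the $\ell^1$ kernel norm of $F_N(\bm\alpha;\cdot,1)$ factors exactly as
\[
\sum_{|\mathbf u|\le N}\bigg|\sum_{|\mathbf v|\le N}e(\bm\alpha\cdot\bm\phi(\mathbf u,\mathbf v))\bigg|=\bigg\{\sum_{|x|\le N}\bigg|\sum_{|y|\le N}e(\alpha_3y^2+2\alpha_2xy)\bigg|\bigg\}^d,
\]
a $d$-th power of a one-dimensional object in $(\alpha_2,\alpha_3)$. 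The paper then treats this inner one-variable Weyl sum by Vaughan's major-arc approximation and a careful case analysis, producing a non-trivial $\ell^1$ bound with exponent $\kappa=\min(d,12)/8$. Finally, the $\ell^1$--$\ell^2$ interpolation is used \emph{only} for the minor-arc error (this is what forces $p>p_0(d)$); the major-arc error is instead interpolated between the $\ell^2$ bound of Proposition~\ref{p2} and an $\ell^p$ bound near $p=d/(d-3)$ coming from Propositions~\ref{p3} and~\ref{p4}. Your proposal to run a single $\ell^1$--$\ell^2$ interpolation for all of $E_\lambda$ would require an $\ell^1$ saving on the major-arc truncation errors as well, which you have not indicated how to obtain.
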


In view of \eqref{eqi.4}, Theorem \ref{thm1} is a direct consequence of Theorem \ref{thm3} and Proposition \ref{p3} below, which establishes the boundedness on ${\ell^p(\mathbb Z^d)}$ of the maximal operator
\[ M^*f = \sup_{\lambda \in \mathbb N} |M_\lambda f|. \]

The outline of the remainder of the paper is as follows. In Section \ref{s2}, we demonstrate several technical lemmas, mostly from number theory, to be used later on. Section \ref{s3} provides an outline of the proof of Theorem \ref{thm3}, breaking it up into several key propositions. The key idea there is the application of the Hardy--Littlewood circle method to decompose the operator $T_\lambda$ and its Fourier multiplier into major and minor arc contributions. We analyze those contributions separately in Sections~\ref{s4} and \ref{s5}, using the results developed in Section \ref{s2} as well as various new techniques described therein. In Section \ref{s6}, we sketch the proof of Theorem~\ref{thm2}. Since that proof tracks closely the proof of Theorem \ref{thm3}, we focus primarily on explaining the necessary modifications. Section \ref{s7} contains some remarks on connections between our results and questions about the distribution of equilateral triangles with vertices in $\mathbb Z^d$. We close the paper, in Section \ref{s8}, with some discussion in support of the conjecture we made above that the optimal ranges for $d$ and $p$ in Theorem \ref{thm1} should be $d \ge 7$ and $p > d/(d-3)$. In particular, we demonstrate that a hypothetical bound for the exponential sum $S_N(\bm\alpha; \xi, \eta)$ below will yield the conclusions of Theorems \ref{thm1} and \ref{thm3} for $d \ge 7$ and $p > d/(d-3)$.

\subsection*{Acknowledgments}
The first author was supported in part by NSF grant DMS-1502464. The second author thanks Towson University for sabbatical support that allowed this work to be completed. The third author was supported in part by Simons Foundation Grant \#360560. Last but not least, the first two authors thank Trevor Wooley for several helpful discussions and generous advice.
 
\section{Background material}
\label{s2}

Most of the work in this section concerns the analysis of the exponential sum
\[ S_N(\bm\alpha; \xi, \eta) = \sum_{|x| \leq N} \sum_{|y| \le N} e(\bm\alpha \cdot \bm\phi(x,y) + \xi x + \eta y), \]
which is the cornerstone of our application of the circle method. 

The first two lemmas provide bounds for the exponential sum $g(q; \mathbf a, m, n)$ defined in \eqref{eq1.3}. Henceforth, we abbreviate $\gcd(a,b,\dots)$ and $\lcm[a,b,\dots]$ as $(a,b,\dots)$ and $[a,b,\dots]$, respectively. 

\begin{lemma}\label{l1}
Suppose that $(q,a_1,a_2,a_3) = 1$. Then 
\[ |g(q; \mathbf a, m, n)| \lesssim q^{-1}(q, a_1a_3-a_2^2)^{1/2} =: q^{-1}w_q(\mathbf a). \]
\end{lemma}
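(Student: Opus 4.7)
The plan is to bound the complete Gauss sum $S := q^2\, g(q;\mathbf a, m, n)$ by $q\,(q,D)^{1/2}$, with $D = a_1 a_3 - a_2^2$; the lemma then follows from the $q^{-2}$ normalisation in \eqref{eq1.3}. By the Chinese Remainder Theorem, factoring $q = q_1 q_2$ with $(q_1, q_2) = 1$ and splitting $r, s$ accordingly decouples $S$ into a product of analogous sums modulo $q_1$ and $q_2$ (with $\mathbf a$ twisted by units); since $(\cdot\,,D)$ is multiplicative, it suffices to treat prime powers $q = p^k$ satisfying $(p, a_1, a_2, a_3) = 1$.

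For such $q$, the first case is $(a_1, p) = 1$ (and $(a_3, p) = 1$ is symmetric). Since $a_1$ is a unit modulo $q$ for every prime $p$, including $p = 2$, the $r$-translation that completes the square is a bijection of $\mathbb Z/q\mathbb Z$ and transforms the exponent to
\[ a_1 r^2 + a_1^{-1} D\, s^2 + mr + (n - m a_1^{-1} a_2)\, s \pmod q. \]
Consequently $S$ factors as a product of two one-dimensional quadratic Gauss sums. The $r$-sum has leading coefficient coprime to $q$ and is thus bounded by $\lesssim q^{1/2}$ via the classical Gauss sum evaluation; the $s$-sum has leading coefficient $a_1^{-1} D$, whose $p$-adic valuation equals that of $D$, and one further completion of the square modulo $q/(q,D)$ gives $\lesssim q^{1/2}(q,D)^{1/2}$. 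Multiplying yields the target.

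The remaining case is $p \mid a_1$ and $p \mid a_3$, forcing $(a_2, p) = 1$ and hence $D \equiv -a_2^2 \not\equiv 0 \pmod p$, so $(q, D) = 1$ and the goal reduces to $|S| \lesssim q$. If $p$ is odd, the substitution $(r, s) = ((r' + s')/2,\, (r' - s')/2)$ produces a form whose new leading coefficient $(a_1 + a_3 + 2a_2)/4 \equiv a_2/2 \pmod p$ is a unit, reducing the subcase to the previous one. The genuine obstacle is $p = 2$, since no linear change of variables over $\mathbb Z/2^k\mathbb Z$ can make either diagonal coefficient odd. To sidestep this I would compute $|S|^2$ by the translation $(r, s) \mapsto (r+u, s+v)$ and perform the inner sum over the shifted variables $(r, s)$ first; orthogonality forces the conditions $2^{k-1} \mid a_1 u + a_2 v$ and $2^{k-1} \mid a_2 u + a_3 v$, which in turn—since $a_2$ is a unit and $D$ is odd—imply $u \equiv v \equiv 0 \pmod{2^{k-1}}$. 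Only four residue pairs $(u,v)$ survive, giving $|S|^2 \le 4 q^2$ and completing the proof.
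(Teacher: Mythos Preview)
Your proof is correct, but it takes a different route from the paper's. The paper applies the squaring trick \emph{uniformly}: it expands $q^4|g|^2$, sums the inner variables to obtain the bound $q^2\nu(q;2\mathbf a)$, where $\nu(q;\mathbf a)$ counts solutions of the linear system $a_1h+a_2k\equiv a_2h+a_3k\equiv 0\pmod q$, and then shows $\nu(p^r;\mathbf a)\le (p^r,a_1a_3-a_2^2)$ by elementary congruence manipulation. No diagonalisation, no case split on which $a_i$ is a unit, and no separate treatment of $p=2$ are needed.

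Your argument instead reduces to prime powers via CRT, then diagonalises the binary form by completing the square whenever a diagonal coefficient is a unit, factoring $S$ into two one-dimensional Gauss sums; only in the residual case $p=2$, $2\mid a_1,a_3$ do you fall back on the $|S|^2$ computation, which is precisely the paper's method specialised to that case. What your approach buys is an explicit factorisation into classical Gauss sums (and hence, in principle, exact evaluations rather than just upper bounds) in the generic case. What the paper's approach buys is brevity and uniformity: a single computation covers all primes and all configurations of $(a_1,a_2,a_3)$ at once, with no change of variables and no appeal to one-dimensional Gauss-sum bounds. Both yield the same implied constant up to bounded factors.
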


\begin{proof}
We have
\[ q^4|g(q; \mathbf a, m, n)|^2 = \sum_{h,k = 1}^q \sum_{x,y=1}^q e_q(F(x,y,h,k)), \]
where
\begin{align*}
F(x,y,h,k) &= a_1h(2x+h) + 2a_2(xk + yh + hk) + a_3k(2y+k) + mh + nk\\
&= 2x(a_1h+a_2k) + 2y(a_2h + a_3k) + f(h,k), \quad \text{say.}
\end{align*} 
Thus,
\begin{align} 
q^4|g(q; \mathbf a, m, n)|^2 &= \sum_{h,k = 1}^q e_q \big( f(h,k) \big) \sum_{x,y=1}^q e_q\big( 2x(a_1h+a_2k) + 2y(a_2h + a_3k) \big) \notag\\
&\le q^2\nu(q; 2\mathbf a), \label{eql1.1}
\end{align} 
where $\nu(q; \mathbf a)$ denote the number of solutions $(h,k) \in \mathbb Z_q^2$ of the pair of congruences
\begin{equation}\label{eql1.2}
a_1h + a_2k \equiv a_2h + a_3k \equiv 0 \pmod q.    
\end{equation}  
The arithmetic function $\nu(q; \mathbf a)$ is multiplicative in $q$ and satisfies
\[ \nu(q; 2\mathbf a) = \begin{cases} \nu(q; \mathbf a) & \text{if } q \text{ is odd}, \\
4\nu(q/2; \mathbf a) & \text{if } q \text{ is even}.
\end{cases} \]
Therefore, the lemma will follow from \eqref{eql1.1}, if we show that, for $p^r \mid q$,
\begin{equation}\label{eql1.3} 
\nu(p^r; \mathbf a) \le (p^r, a_1a_3-a_2^2).     
\end{equation}  

Consider \eqref{eql1.2} with $q=p^r$ and write $p^s = (p^r, a_1a_3-a_2^2)$. By hypothesis, we have $(a_i,p)=1$ for some $1 \le i \le 3$: say, $(a_1, p) = 1$. Let  $\overline{a_1}$ denote the multiplicative inverse of $a_1$ modulo~$p^r$. Then \eqref{eql1.2} gives
\[ h \equiv -\overline{a_1}a_2k \pmod{p^r}, \qquad (a_1a_3-a_2^2)k \equiv 0 \pmod {p^r}. \]
The latter congruence determines $k$ modulo $p^{r-s}$, so there are $p^s$ possibilities for $k$; and for each of those $k$, there is a single choice for $h$. Hence, \eqref{eql1.2} with $q = p^r$ has exactly $p^s$ solutions. This establishes \eqref{eql1.3}.
\end{proof}

\begin{lemma}\label{l2}
Let $w_q(\mathbf a)$ be the function appearing in the statement of Lemma \ref{l1}. Then, for real $s \ge 2$, one has  
\begin{equation}\label{eql2.1}
\sum_{\substack{1 \le \mathbf a \le q\\ (q, a_1, a_2, a_3) = 1}} w_q(\mathbf a)^s \le \tau(q)^2 q^{s/2 + 2},     
\end{equation} 
where $\tau(q)$ is the number of positive divisors of $q$.
\end{lemma}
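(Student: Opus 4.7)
The plan is to stratify the sum on the left of \eqref{eql2.1} by the common divisor $d = (q, a_1a_3 - a_2^2)$, reducing the estimate to a purely combinatorial counting problem. Specifically, since $w_q(\mathbf a)^s = d^{s/2}$ on the level set where $d = (q, a_1a_3-a_2^2)$, dropping the coprimality condition $(q,a_1,a_2,a_3)=1$ at the outset (which only enlarges the sum) gives
\[ \sum_{\substack{1 \le \mathbf a \le q\\ (q, a_1, a_2, a_3) = 1}} w_q(\mathbf a)^s \le \sum_{d \mid q} d^{s/2} N(d), \qquad N(d) := \#\{ \mathbf a \in [1,q]^3 : d \mid a_1a_3 - a_2^2\}. \]
Most of the work then goes into bounding $N(d)$.

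To count $N(d)$, I would fix $a_1, a_2 \in [1,q]$ and count admissible $a_3$. The linear congruence $a_1 a_3 \equiv a_2^2 \pmod d$ in the unknown $a_3$ either has no solution or has exactly $(a_1, d)$ solutions modulo $d$, so it has at most $q(a_1, d)/d$ solutions in $[1,q]$. Summing over $a_2$ contributes a factor of $q$, and summing over $a_1$ uses the standard estimate
\[ \sum_{a_1 = 1}^{q} (a_1, d) = \sum_{e \mid d} e \cdot \#\{ a_1 \le q : (a_1, d) = e\} \le \sum_{e \mid d} e \cdot \frac{q}{e} = q\tau(d), \]
which together yield $N(d) \le q^3 \tau(d)/d$.

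Substituting this into the stratification and using $d \le q$ with the hypothesis $s \ge 2$ (so that the exponent $s/2 - 1$ is non-negative and monotone in $d$), I obtain
\[ \sum_{\mathbf a} w_q(\mathbf a)^s \le q^3 \sum_{d \mid q} d^{s/2 - 1}\tau(d) \le q^{s/2 + 2} \sum_{d \mid q} \tau(d). \]
The proof then concludes with the multiplicative identity $\sum_{d \mid q} \tau(d) = \tau_3(q) \le \tau(q)^2$, which by multiplicativity reduces to the prime-power check $\binom{r+2}{2} \le (r+1)^2$, valid for $r \ge 0$.

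The argument is short and largely mechanical; the only delicate step is the counting of solutions to $a_1a_3 \equiv a_2^2 \pmod d$, where overlooking the dependence on $(a_1, d)$ and replacing it by $1$ on average would cost a factor of $\tau(d)$ and break the bound. The hypothesis $s \ge 2$ is used exactly once, in bounding $d^{s/2-1} \le q^{s/2-1}$; this suggests the bound is essentially sharp at $s = 2$ (up to divisor factors).
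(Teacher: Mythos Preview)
Your proof is correct and takes a genuinely different route from the paper's. The paper first reduces to prime powers $q=p^m$ via multiplicativity of both sides, then stratifies by the exact $p$-adic valuation $k$ of $a_1a_3-a_2^2$, and uses the coprimality condition $(p,a_1,a_2,a_3)=1$ in an essential way: when $k>0$ and $p\mid a_1,a_3$, coprimality forces $(p,a_2)=1$, which is inconsistent with $p\mid a_1a_3-a_2^2$, so that case is vacuous; the remaining cases give $\nu(p^m;k)\le 2p^{3m-k}$ and the sum over $k$ finishes. By contrast, you discard the coprimality condition at the outset, work globally without any multiplicative reduction, and count solutions to the linear congruence $a_1a_3\equiv a_2^2\pmod d$ directly, absorbing the bad $(a_1,d)>1$ cases into the factor $(a_1,d)$ and the averaged bound $\sum_{a_1\le q}(a_1,d)\le q\tau(d)$. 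Your closing step $\sum_{d\mid q}\tau(d)=\tau_3(q)\le\tau(q)^2$ replaces the paper's explicit inequality $p^{3m}+2mp^{m(s/2+2)}<(m+1)^2p^{m(s/2+2)}$. Your argument is shorter and more elementary (no multiplicativity, no case analysis at primes), while the paper's localized count gives slightly sharper control of each $\nu(p^m;k)$; for the stated inequality both arrive at the same bound.
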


\begin{proof}
Both sides of \eqref{eql2.1} are multiplicative in $q$, so it suffices to consider the case $q = p^m$, with $p$ prime. In this case, the left side of \eqref{eql2.1} becomes
\[ \sum_{k = 0}^m p^{sk/2} \nu(p^m; k), \]
where $\nu(p^m; k)$ is the number of triples $a_1, a_2, a_3$ with
\begin{equation}\label{eql2.2} 
1 \le a_i \le p^m, \quad (p, a_1, a_2, a_3) = 1, \quad (p^m, a_1a_3 - a_2^2) = p^k. 
\end{equation} 

Let $k > 0$ and suppose that $(p,a_1) = 1$. Then the congruence 
\[ a_1a_3 \equiv a_2^2 \pmod{p^k}, \] 
which is implicit in \eqref{eql2.2}, has $p^{m-k}$ solutions $a_3$ for each choice of $a_1, a_2$. By symmetry, a similar conclusion holds also when $(p,a_3)=1$. Hence,
\[ \nu(p^m; k) \le 2p^{3m-k} + \nu_0(p^m; k), \]
where $\nu_0(p^m; k)$ is the number of triples $a_1, a_2, a_3$ with
\[ 1 \le a_i \le p^m, \quad (p, a_1, a_3) = p, \quad (p, a_2) = 1, \quad (p^m, a_1a_3 - a_2^2) = p^k. \]
Since the last three conditions are inconsistent when $k > 0$, we conclude that
\begin{equation}\label{eql2.3} 
\nu(p^m; k) \le 2p^{3m-k}. 
\end{equation} 

Combining \eqref{eql2.3} and the trivial bound $\nu(p^m;0) \le p^{3m}$, we deduce that
\begin{align*} 
\sum_{k = 0}^m p^{sk/2} \nu(p^m; k) &\le p^{3m} + 2p^{3m}\sum_{k=1}^m p^{k(s/2-1)} \\
&\le p^{3m} + 2mp^{m(s/2+2)} < \tau(p^m)^2p^{m(s/2+2)}. \qedhere 
\end{align*}
\end{proof}

The next lemma bounds for the exponential integral $V_N(\bm\beta; \xi, \eta)$ defined in \eqref{eq1.4}. It is an immediate consequence of Theorem 1.5 in \cite{ACK04}.

\begin{lemma}\label{l3}
One has
\[ |V_N(\bm\beta; \xi, \eta)| \lesssim N^2\Delta(1 + N^2|\bm\beta| + N|\xi| + N|\eta|), \]
where $\Delta(x) = x^{-1/2}\log(x+1)$.
\end{lemma}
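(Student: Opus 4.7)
The plan is to reduce to the case $N=1$ by rescaling and then directly invoke the cited Theorem 1.5 of \cite{ACK04}. First, I would substitute $x = Nu$, $y = Nv$ in the double integral defining $V_N$. Since $\bm\phi$ is homogeneous of degree 2, we have $\bm\phi(Nu, Nv) = N^2 \bm\phi(u,v)$, so
\[
V_N(\bm\beta;\xi,\eta) = N^2 \int_{-1}^1\int_{-1}^1 e\big(N^2\bm\beta\cdot\bm\phi(u,v) + N\xi\,u + N\eta\, v\big)\, du\, dv = N^2 V_1(N^2\bm\beta;\, N\xi,\, N\eta).
\]
Because the argument $1 + N^2|\bm\beta| + N|\xi| + N|\eta|$ of $\Delta$ on the right-hand side of the claimed bound coincides with $1 + |N^2\bm\beta| + |N\xi| + |N\eta|$, it suffices to prove the special case $N=1$, namely
\[
|V_1(\bm\alpha; s, t)| \lesssim \Delta\big(1 + |\bm\alpha| + |s| + |t|\big)
\]
for arbitrary $\bm\alpha \in \mathbb R^3$ and $s,t \in \mathbb R$.

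At this point $V_1$ is a two-dimensional oscillatory integral over the unit square with quadratic phase $\alpha_1 u^2 + 2\alpha_2 uv + \alpha_3 v^2 + su + tv$, which is exactly the setting in which Theorem 1.5 of \cite{ACK04} delivers the claimed estimate; inserting that bound and unwinding the rescaling completes the proof. The genuine analytic content, which is handled once and for all in \cite{ACK04}, is the treatment of the \emph{degenerate} case in which the Hessian of the phase is singular (for example $\alpha_1\alpha_3 = \alpha_2^2$, or the extreme sub-case $\alpha_1 = \alpha_3 = 0$ with $\alpha_2$ large). An orthogonal change of variables that diagonalises the quadratic form separates the integral into a one-dimensional Fresnel-type integral, which supplies the $\tau^{-1/2}$ decay via van der Corput's lemma after completing the square, and a one-dimensional integral with linear phase in the transverse direction, whose $O(\log(\tau+1))$ contribution is the source of the logarithmic loss built into $\Delta$. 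The non-degenerate case is standard two-variable stationary phase, so the only real obstacle, if one wished to avoid the citation, would be arranging the two regimes so that the bounds match seamlessly in the transition zone where the Hessian determinant is comparable to $1$.
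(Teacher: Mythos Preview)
Your approach is correct and matches the paper's: the paper simply states that the lemma ``is an immediate consequence of Theorem 1.5 in \cite{ACK04}'' without any further argument, and your explicit rescaling $V_N(\bm\beta;\xi,\eta)=N^2 V_1(N^2\bm\beta;N\xi,N\eta)$ just makes that reduction transparent. The additional commentary on degenerate versus non-degenerate Hessians is a helpful gloss on what the cited theorem does, but is not needed for the proof itself.
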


For our analysis of the exponential sum $S_N(\bm\alpha; \xi, \eta)$, we need to define sets of major and minor arcs. When $1 \le P \le N$ and $a,q \in \mathbb N$ with $1 \le a \le q \le P$, we define the one-dimensional major arc ${\mathfrak M}(q,a)$ as the closed interval
\begin{equation}\label{eq2.7}  
{\mathfrak M}(q,a) = {\mathfrak M}(N,P; q,a) = \left[ \frac aq - \frac P{qN^{2}},\frac aq + \frac P{qN^{2}} \right].     
\end{equation} 
Major arcs in $\mathbb T^3$ are then defined as Cartesian products of single-dimensional ones in two ways. Given a rational point
\[ \mathbf r = \left( \frac {a_1}{q_1}, \frac {a_2}{q_2}, \frac {a_3}{q_3} \right) = \left( \frac {b_1}{q}, \frac {b_2}{q}, \frac {b_3}{q} \right), \]
where
\[ (a_1,q_1) = (a_2,q_2) = (a_3, q_3) = (b_1,b_2,b_3,q) = 1, \]
we consider two major arcs centered at $\mathbf r$:
\[ \mathfrak N(\mathbf{q;a}) = {\mathfrak M}(q_1,a_1) \times {\mathfrak M}(q_2,a_2) \times {\mathfrak M}(q_3,a_3) \]
and
\[ \mathfrak M(q; \mathbf{b}) = {\mathfrak M}(q,b_1) \times {\mathfrak M}(q,b_2) \times {\mathfrak M}(q,b_3). \]
We then define the respective sets of major and minor arcs as
\begin{equation}\label{eq2.8n} 
 \mathfrak N = \mathfrak N(P) = \bigcup_{\substack{1 \le \mathbf a \le \mathbf q \le P\\ (a_i,q_i) = 1}} \mathfrak N(\mathbf{q; a}), \quad \mathfrak n = \mathfrak n(P) = \mathbb T^3 \setminus {\mathfrak N},     
\end{equation} 
and
\begin{equation}\label{eq2.8m} 
 \mathfrak M = \mathfrak M(P) = \bigcup_{\substack{1 \le \mathbf a \le q \le P\\ (a_1, a_2, a_3, q) = 1}} \mathfrak M(q; \mathbf{a}), \quad \mathfrak m = \mathfrak m(P) = \mathbb T^3 \setminus \mathfrak M.     
\end{equation} 

When $\bm\alpha$ is in the set of minor arcs $\mathfrak n(P)$, we bound $S_N(\bm\alpha; \xi, \eta)$ using the following lemma.

\begin{lemma}\label{l5}
Let the set of minor arcs $\mathfrak n = \mathfrak n(P)$ be given by \eqref{eq2.8n} with $1 \le P \le N$. Then for all $\xi,\eta \in \mathbb T$,
\[ \sup_{\bm\alpha \in \mathfrak n} |S_N(\bm\alpha; \xi, \eta)| \lesssim_\eps N^{2+\eps}P^{-1/2}. \]
\end{lemma}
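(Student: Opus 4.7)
The set $\mathfrak n(P)$ consists of $\bm\alpha \in \mathbb T^3$ for which at least one coordinate $\alpha_i$ lies in the one-dimensional minor arcs $\mathfrak m^{(1)}(P) = \mathbb T \setminus \bigcup_{(a,q)=1,\, q\le P} [a/q - P/(qN^2),\, a/q + P/(qN^2)]$. I first note that, by Dirichlet's theorem applied to $2\alpha_i$ with denominator bound $N^2/P$ (and a quick check of lowest terms), any such $\alpha_i \in \mathfrak m^{(1)}(P)$ admits a rational approximation $|2\alpha_i - a/q| \le 1/q^2$ with $(a,q)=1$ and $P/2 < q \le N^2/P$; the factor of $2$ accommodates the $2\alpha_2 xy$ cross term in $\bm\alpha\cdot\bm\phi(x,y)$.

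The plan is to bound $|S_N|^2$ via a Cauchy--Schwarz step in one of $x, y$, followed by a Weyl differencing of the resulting square. Writing $S_N = \sum_{|x|\le N} e(\alpha_1 x^2 + \xi x) T_x$ with $T_x = \sum_{|y|\le N} e(\alpha_3 y^2 + (2\alpha_2 x + \eta)y)$, Cauchy--Schwarz in $x$ yields $|S_N|^2 \le (2N+1) \sum_{|x|\le N}|T_x|^2$. Expanding $|T_x|^2$ via the change of variable $y = y' + k$, interchanging the $x$- and $k$-summations, and summing the resulting $x$- and $y'$-progressions geometrically, I expect to arrive at
\[ |S_N|^2 \lesssim N \sum_{|k|\le 2N} \min(N, \|2\alpha_2 k\|^{-1})\,\min(N, \|2\alpha_3 k\|^{-1}), \]
and, by symmetry (Cauchy--Schwarz in $y$ instead),
\[ |S_N|^2 \lesssim N \sum_{|h|\le 2N} \min(N, \|2\alpha_1 h\|^{-1})\,\min(N, \|2\alpha_2 h\|^{-1}). \]
Between them, these bounds cover every configuration, since whichever $\alpha_i$ happens to lie in $\mathfrak m^{(1)}(P)$ appears in at least one of them (the cross-term coefficient $\alpha_2$ appearing in both).

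To conclude, I bound the $\min$-factor attached to any non-minor coordinate trivially by $N$, reducing the task to estimating $\sum_{h=1}^{2N} \min(N, \|2\alpha_i h\|^{-1})$ for some $\alpha_i \in \mathfrak m^{(1)}(P)$. The classical estimate $\sum_{h=1}^H \min(M, \|\beta h\|^{-1}) \lesssim (H/q + 1)(M + q\log q)$ (Vaughan, \emph{The Hardy--Littlewood Method}, Lemma~2.2), applied with $\beta = 2\alpha_i$, $H = 2N$, $M = N$, and $q \in (P/2, N^2/P]$, together with $P \le N$, gives $\sum \lesssim N^{2+\eps}/P$. Therefore $|S_N|^2 \lesssim N^{4+\eps}/P$, i.e.\ $|S_N| \lesssim N^{2+\eps}P^{-1/2}$, as required. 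The only mildly delicate point is coordinating the choice of Cauchy--Schwarz direction with the location of the minor-arc coordinate; no genuine obstacle arises.
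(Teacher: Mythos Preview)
Your proposal is correct and follows essentially the same route as the paper: Cauchy--Schwarz in one variable, Weyl differencing in the other, then Vaughan's Lemma~2.2 applied to the coordinate lying on the minor arcs. The only cosmetic differences are that the paper performs the substitution $k=2h$ to remove the factor of $2$ (so Dirichlet is applied to $\alpha_i$ rather than $2\alpha_i$), and it handles the two Cauchy--Schwarz directions by a one-line appeal to the $x\leftrightarrow y$ symmetry rather than writing both bounds explicitly.
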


\begin{proof}
By Dirichlet's theorem on Diophantine approximation, there exist rational approximations $a_i/q_i$, $i=1,2,3$, such that
\begin{equation}\label{eql5.1}
|q_i\alpha_i - a_i| \le PN^{-2}, \quad (a_i, q_i) = 1, \quad 1 \le q_i \le N^2P^{-1}.    
\end{equation}  
Due to our assumption that $\bm\alpha \in \mathfrak n$, we must have $q_i > P$ for at least one index $i$, and by symmetry, we may assume that $i=1$ or $2$. 

We have
\begin{align*}
|S_N(\bm\alpha; \xi, \eta)|^2 & \le N \sum_{|y| \le N} \bigg| \sum_{|x| \le N} e\big( \alpha_1x^2 + 2\alpha_2xy + \xi x\big) \bigg|^2 \\   
& \le N \sum_{|y| \le N} \sum_{|h| \le 2N} \sum_{x \in I(h)} e\big( \alpha_1h(2x+h) + 2\alpha_2hy + \xi h\big) \\   
& \le N  \sum_{|k| \le 4N} \bigg| \sum_{x \in I(k/2)} e( \alpha_1kx ) \bigg| \cdot \bigg| \sum_{|y| \le N} e(\alpha_2ky) \bigg| \\   
& \lesssim N^3 + N \sum_{k \le 4N} \prod_{j=1}^2 \min \big( N, \| \alpha_j k \|^{-1} \big),   
\end{align*}  
where $I(h)$ is a subinterval of $[-N,N]$ that depends on $h$ and $\|x\| = \min\{ |x-n| : n \in \mathbb Z \}$. Since we have \eqref{eql5.1}, we can now apply Lemma 2.2 in Vaughan \cite{Vaug97} to deduce that, for $i=1,2$, 
\[ |S_N(\bm\alpha; \xi, \eta)|^2 \lesssim N^{4} \big( q_i^{-1}+ N^{-1} + q_iN^{-2} \big) \log N. \] 
The lemma follows on recalling that $P < q_i \le N^2/P$ for at least one of $i=1$ or $2$.
\end{proof}

Next, we establish a local approximation for $S_N(\bm\alpha; \xi, \eta)$ when $\bm\alpha$ is on a major arc $\mathfrak M(q; \mathbf{a})$. 

\begin{lemma}\label{l4}
Let $\bm\alpha \in \mathbb T^3$, $\xi,\eta \in \mathbb T$, $q \in \mathbb N$, $\mathbf a \in \mathbb Z^3$ with $(q,a_1,a_2,a_3) = 1$, $m,n \in \mathbb Z$, and suppose that
\[ \left| \xi - \frac {m}q \right| \le \frac 1{2q}, \quad \left| \eta - \frac {n}q \right| \le \frac 1{2q}. \]
Then 
\[ S_N(\bm\alpha; \xi, \eta) = g(q; \mathbf a, m, n)V_N(\bm\beta; \theta_1, \theta_2) + O \big( qN( 1 + N^2|\bm\beta|) \big), \] 
where $\bm\beta = \bm\alpha - q^{-1}\mathbf a$, $\theta_1 = \xi - m/q$, $\theta_2 = \eta - n/q$.
\end{lemma}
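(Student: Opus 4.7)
The plan is the standard Hardy--Littlewood residue decomposition: separate the arithmetic exponential sum modulo $q$ from the smooth oscillatory factor. Writing each summation variable as $x = qu + r$ with $r \in \{1,\ldots,q\}$ and $u \in \mathbb Z$, and similarly $y = qv + s$, the $q$-periodicity of $\mathbf a \cdot \bm\phi(x,y) + mx + ny \pmod q$ gives the factorization
\[
S_N(\bm\alpha;\xi,\eta) = \sum_{r,s=1}^q e_q\bigl(\mathbf a \cdot \bm\phi(r,s) + mr + ns\bigr) \sum_{(u,v)} F(qu+r,\,qv+s),
\]
where $F(x,y) = e(\bm\beta \cdot \bm\phi(x,y) + \theta_1 x + \theta_2 y)$ and $(u,v)\in\mathbb Z^2$ is subject to $|qu+r|, |qv+s| \le N$. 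Here one uses that $\bm\alpha \cdot \bm\phi(x,y) + \xi x + \eta y \equiv q^{-1}(\mathbf a \cdot \bm\phi(r,s) + mr + ns) + (\bm\beta \cdot \bm\phi(x,y) + \theta_1 x + \theta_2 y) \pmod 1$.

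Next I would approximate the inner lattice sum by $q^{-2} V_N(\bm\beta;\theta_1,\theta_2)$ by two iterated applications of Euler--Maclaurin (or Abel) summation, first in $u$ for fixed $v$ and then in $v$. The single-variable step yields
\[
\sum_u F(qu+r, qv+s) = q^{-1}\int_{-N}^{N} F(x, qv+s)\,dx + O\!\left(1 + \int_{-N}^N |\partial_x F(x, qv+s)|\, dx\right),
\]
and repeating in $v$ produces, after the change of variables $x = qu+r$, $y = qv+s$, the integral $q^{-2} V_N(\bm\beta;\theta_1,\theta_2)$ plus boundary and derivative errors. The gradient of the smooth phase $f(x,y) = \bm\beta\cdot\bm\phi(x,y)+\theta_1 x+\theta_2 y$ satisfies $|\partial_x f|, |\partial_y f| \ll N|\bm\beta| + |\theta_1|+|\theta_2|$ on the box $|x|,|y|\le N$, and the hypothesis $|\theta_i| \le 1/(2q)$ keeps the linear-phase contribution from dominating. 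This gives a per-$(r,s)$ deviation of order $O\bigl(q^{-1}N(1 + N^2|\bm\beta|)\bigr)$.

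Substituting back, the main term assembles into $\bigl(\sum_{r,s=1}^q e_q(\mathbf a\cdot\bm\phi(r,s)+mr+ns)\bigr) \cdot q^{-2}V_N(\bm\beta;\theta_1,\theta_2) = g(q;\mathbf a,m,n)\, V_N(\bm\beta;\theta_1,\theta_2)$, matching the claimed leading term, while summing the error across the $q^2$ residue pairs (using $|e_q(\cdot)|\le 1$) produces the total error $O\bigl(qN(1+N^2|\bm\beta|)\bigr)$.

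The main technical obstacle is the bookkeeping in the Euler--Maclaurin step: one must split the error cleanly into a boundary piece (from the finite summation range along each of the two variables) and a derivative piece (from replacing the Riemann sum with the integral), and show that both contribute with the correct dependence on $q$, $N$, and $|\bm\beta|$. The hypothesis $|\theta_i| \le 1/(2q)$ is essential here, preventing the linear-phase gradient from generating spurious error terms that would not fit inside the compact bound $qN(1+N^2|\bm\beta|)$; once this calibration is carried out in each variable, the proof is routine.
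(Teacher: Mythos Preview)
Your residue decomposition and the overall structure match the paper's argument exactly; the issue is in the error accounting for the sum-to-integral step. The first-order Euler--Maclaurin bound you invoke,
\[
\sum_u F(qu+r,qv+s)=q^{-1}\int_{-N}^N F(x,qv+s)\,\mathrm dx+O\Big(1+\int_{-N}^N|\partial_x F|\,\mathrm dx\Big),
\]
together with $|\partial_x f|\ll N|\bm\beta|+|\theta_1|$, gives a per-$v$ error of size $O(1+N^2|\bm\beta|+N/q)$, since $|\theta_1|\le 1/(2q)$ only yields $N|\theta_1|\le N/(2q)$. Summing over the $\sim N/q$ values of $v$ and then over the $q^2$ residue pairs produces a total error $O\big(qN(1+N^2|\bm\beta|)+N^2\big)$. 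The extra $N^2$ is \emph{not} absorbed by the stated bound when $q$ is small and $\bm\beta$ is near $0$ (take $q=1$, $\bm\beta=\mathbf 0$: the claimed error is $O(N)$ but your method only gives $O(N^2)$, which is the trivial bound). So the sentence ``the hypothesis $|\theta_i|\le 1/(2q)$ keeps the linear-phase contribution from dominating'' is precisely where the argument breaks: that hypothesis controls the size of $\theta_i$ but not enough to make the gradient term harmless under crude Euler--Maclaurin.

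The fix, which is what the paper does, is to separate the linear phase from the quadratic one. For $|\theta|\le 1/(2q)$ the geometric-series identity
\[
\sum_{\substack{X<n\le Y\\ n\equiv a\,(\mathrm{mod}\,q)}} e(\theta n)=\frac1q\int_X^Y e(\theta x)\,\mathrm dx+O(1)
\]
holds with error $O(1)$ uniformly in $\theta$ (this is exactly where the hypothesis $|\theta|\le 1/(2q)$ is used, and it is sharper than what first-order Euler--Maclaurin sees). One then applies partial summation against the quadratic weight $e(\beta_1x^2+2\beta_2xy)$, whose total variation in $x$ is $O(N^2|\bm\beta|)$, to obtain the per-variable error $O(1+N^2|\bm\beta|)$ with no $N/q$ term. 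Iterating in the second variable and summing over residues then yields the stated $O\big(qN(1+N^2|\bm\beta|)\big)$. Your outline becomes correct once this sharper single-variable input replaces the generic gradient bound.
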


\begin{proof}
The result follows by partial summation from the asymptotic formula
\begin{equation}\label{eql4.1}
\sum_{\substack{X < n \le Y\\ n \equiv a \; (\mathrm{mod} \, q)}} e(\theta n) = \frac 1q \int_X^Y e(\theta x) \, \mathrm dx + O(1), 
\end{equation}
where $a,q \in \mathbb N$ and $|\theta| \le (2q)^{-1}$. 

Let
\[ S_{r,s}(\bm\alpha; \xi, \eta) = \sum_{\substack{|x| \le N\\x \equiv r \; (\mathrm{mod} \, q)}} \sum_{\substack{|y| \le N\\y \equiv s \; (\mathrm{mod} \, q)}} e\big( \bm\alpha \cdot \bm\phi(x,y) + \xi x + \eta y \big). \]
By splitting the terms in $S_N(\bm\alpha; \xi, \eta)$ according to their residues modulo $q$, we get
\begin{align}\label{eql4.2}
S_N(\bm\alpha; \xi, \eta) &= \sum_{r,s=1}^{q}  S_{r,s}(\bm\alpha; \xi, \eta) \notag\\
&= \sum_{r,s=1}^{q} e_q( \mathbf a \cdot \bm\phi(r,s) + mr + ns)  S_{r,s}(\bm\beta; \theta_1, \theta_2).
\end{align}
For a fixed $x$, $|x| \le N$, partial summation over $y$ and \eqref{eql4.1} yield
\[ \sum_{\substack{|y| \le N\\y \equiv s \; (\mathrm{mod} \, q)}} e\big( 2\beta_2xy + \beta_3y^2 + \theta_2y \big) = \frac 1q \int_{-N}^N e\big( 2\beta_2xy + \beta_3y^2 + \theta_2y \big) \, \mathrm dy + O\big( 1 + N^2|\bm\beta| \big). \]
Similarly, for a fixed $y$, $|y| \le N$, we get
\[ \sum_{\substack{|x| \le N\\x \equiv r \; (\mathrm{mod} \, q)}} e\big( \beta_1x^2+ 2\beta_2xy + \theta_1x \big) = \frac 1q \int_{-N}^N e\big( \beta_1x^2+ 2\beta_2xy + \theta_1x \big) \, \mathrm dx + O\big( 1 + N^2|\bm\beta| \big). \]
Together, these two approximations give
\[ S_{r,s}(\bm\beta; \theta_1, \theta_2) = q^{-2} V_N(\bm\beta; \theta_1, \theta_2) + O\big( q^{-1}N(1 + N^2|\bm\beta|) \big). \]
The claim of the lemma follows from this approximation and \eqref{eql4.2}. 
\end{proof}

When $P$ is not too large, Lemmas \ref{l5} and \ref{l4} can be combined to extend the bound of Lemma~\ref{l5} to the wider set of minor arcs $\mathfrak m(P)$ defined by \eqref{eq2.8m}. The next lemma provides the details. 

\begin{lemma}\label{l6}
Let the set of minor arcs $\mathfrak m = \mathfrak m(P)$ be given by \eqref{eq2.8m} with $1 \le P \le N$. Then for all $\xi,\eta \in \mathbb T$,
\begin{equation}\label{eq2.12} 
\sup_{\bm\alpha \in \mathfrak m} |S_N(\bm\alpha; \xi, \eta)| \lesssim_\eps N^{2+\eps}P^{-1/2} + NP^3.      
\end{equation} 
In particular, if $1 \le P \le N^{2/7}$, 
\[ \sup_{\bm\alpha \in \mathfrak m} |S_N(\bm\alpha; \xi, \eta)| \lesssim_\eps N^{2+\eps}P^{-1/2}. \]
\end{lemma}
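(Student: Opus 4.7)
My plan is to combine Dirichlet's theorem with Lemmas \ref{l5} and \ref{l4} to address the two (overlapping) ways in which $\bm\alpha \in \mathfrak m(P)$ can fail to lie on a major arc. For such $\bm\alpha$, I apply Dirichlet's theorem to each coordinate, producing approximations $a_i/q_i$ with $(a_i, q_i) = 1$, $1 \le q_i \le N^2/P$, and $|q_i\alpha_i - a_i| \le PN^{-2}$ for $i=1,2,3$. If some $q_i > P$, then $\bm\alpha \in \mathfrak n(P)$ and Lemma \ref{l5} immediately yields $|S_N(\bm\alpha;\xi,\eta)| \lesssim_\eps N^{2+\eps}P^{-1/2}$. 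So the substantive case is when all $q_i \le P$, i.e., $\bm\alpha \in \mathfrak N(P) \cap \mathfrak m(P)$, which I plan to handle with Lemma \ref{l4}.

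In that case I set $q = [q_1, q_2, q_3]$ and $b_i = a_i q/q_i$. A $p$-adic check---for each prime $p \mid q$, pick an index $i$ with $v_p(q_i) = v_p(q)$, whereupon $(a_i, q_i) = 1$ forces $p \nmid b_i$---shows $(q, b_1, b_2, b_3) = 1$, so Lemma \ref{l4} applies with this common denominator $q$, after choosing $m, n \in \mathbb Z$ with $|\xi - m/q|, |\eta - n/q| \le (2q)^{-1}$. Writing $\bm\beta = \bm\alpha - \mathbf b/q$, one has $|\beta_i| \le P/(q_iN^2)$. The pivotal arithmetic observation driving the estimate is
\[ \frac{q}{q_i} = \frac{[q_j, q_k]}{\bigl(q_i,\,[q_j, q_k]\bigr)} \le q_jq_k \le P^2 \]
for the complementary indices $j, k$. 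This gives $q \le P^3$, and more importantly
\[ qN\bigl(1 + N^2|\bm\beta|\bigr) \le qN + NP \cdot \max_i (q/q_i) \le NP^3 + NP^3, \]
so the Lemma \ref{l4} error term is $O(NP^3)$.

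For the main term $g(q; \mathbf b, m, n) V_N(\bm\beta; \theta_1, \theta_2)$, Lemma \ref{l1} yields $|g| \lesssim q^{-1/2}$. If $q > P$, the trivial bound $|V_N| \le 4N^2$ already produces a main-term contribution of size $N^2P^{-1/2}$. If instead $q \le P$, then the membership $\bm\alpha \in \mathfrak m \subset \mathbb T^3 \setminus \mathfrak M(q; \mathbf b)$ forces some $|\beta_i| > P/(qN^2)$, hence $N^2|\bm\beta| > P/q$; Lemma \ref{l3} then gives $|V_N| \lesssim N^2(q/P)^{1/2}\log N$, and multiplying by $|g| \lesssim q^{-1/2}$ again recovers $\lesssim N^{2+\eps}P^{-1/2}$ for the main term. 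Assembling the cases proves \eqref{eq2.12}, and the ``in particular'' assertion drops out since $NP^3 \le N^{2+\eps}P^{-1/2}$ exactly when $P \le N^{2/7}$ (up to the $N^\eps$). The subtle point---and the only real obstacle---is the inequality $q/q_i \le P^2$; without it one would only get $q/q_i \le P^3$, ballooning the error to $O(NP^4)$ and breaking the argument.
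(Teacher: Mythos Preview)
Your proof is correct and follows essentially the same route as the paper's: split $\mathfrak m$ into $\mathfrak n$ (handled by Lemma~\ref{l5}) and $\mathfrak N \cap \mathfrak m$, then on the latter apply Lemma~\ref{l4} with common denominator $q=[q_1,q_2,q_3]$, bounding the error by $O(NP^3)$ via $q/q_i \le P^2$ and the main term by $N^{2+\eps}P^{-1/2}$ via Lemmas~\ref{l1} and~\ref{l3}. The only cosmetic difference is that the paper packages your two sub-cases for the main term ($q>P$ versus $q\le P$) into the single observation $(q+qN^2|\bm\beta|)^{-1/2}\le P^{-1/2}$.
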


\begin{proof}
Let $\mathfrak N$ and $\mathfrak n$ be the sets of major and minor arcs defined by \eqref{eq2.8n}. When $\bm\alpha \in \mathfrak n$, the bound \eqref{eq2.12} follows from Lemma \ref{l5}, so we may focus on the case when $\bm\alpha \in \mathfrak N \cap \mathfrak m$. Suppose that $\bm\alpha \in \mathfrak N(\mathbf{q;a})$ and define 
\[ b_i = \frac {a_iq}{q_i}, \quad \beta_i = \alpha_i - \frac {a_i}{q_i} = \alpha_i - \frac {b_i}q  \qquad (1 \le i \le 3), \] 
where $q = [q_1, q_2, q_3]$. We remark that since $(a_i,q_i)=1$ for all $i$, one has $(q, b_1, b_2, b_3) = 1$ and $\mathfrak M(q; \mathbf b) \subseteq \mathfrak N(\mathbf{q;a})$. Since $\bm\alpha \in \mathfrak m$, we must have 
\[ q \ge P \quad \text{ or } \quad \bm\alpha \in \mathfrak N(\mathbf{q;a}) \setminus \mathfrak M(q; \mathbf b), \]
and hence,
\begin{equation}\label{eql6.0} 
(q + qN^2|\bm\beta|)^{-1/2} \le P^{-1/2}. 
\end{equation} 

Choose integers $m,n$ such that 
\[ |q\xi - m| \le \frac 12, \quad |q\eta - n| \le \frac 12. \] 
Lemma \ref{l4} gives 
\begin{equation}\label{eql6.1}
S_N(\bm\alpha; \xi, \eta) =  g(q; \mathbf b, m, n)V_N( \bm\beta; \theta_1, \theta_2) + O \big( qN( 1 + N^2|\bm\beta|) \big), 
\end{equation} 
where $|\theta_i| \le (2q)^{-1}$. We have
\[ q(1 + N^2|\bm\beta|) \le q_1q_2q_3(1 + Pq_1^{-1} + Pq_2^{-1} + Pq_3^{-1}) \lesssim P^3. \]
We now use Lemmas \ref{l1} and \ref{l3} to bound the main term in the approximation \eqref{eql6.1} and obtain
\begin{equation}\label{eql6.2} 
S_N(\bm\alpha; \xi, \eta) \lesssim_\eps N^{2+\eps}(q + qN^2|\bm\beta|)^{-1/2} + NP^3. 
\end{equation} 
The lemma follows from \eqref{eql6.2} and \eqref{eql6.0}. 
\end{proof}

Our next lemma provides an upper bound for $S_N(\bm\alpha; \xi, 0)$ for $\bm\alpha \in \mathfrak N(\mathbf{q;a})$ that is stronger than \eqref{eql6.2} above. It leads immediately to a stronger version of Lemma \ref{l6} in the special case $\eta = 0$. Without this result, the range of $p$ in Theorems \ref{thm1} and \ref{thm3} would be significantly reduced. The proof of this lemma is too technical to include here and will appear as a part of a forthcoming work of the second author \cite{Kumc20}. It is based on the results of Vaughan \cite{Vaug09} on quadratic Weyl sums and uses a more sophisticated version of the ideas behind the proof of Proposition \ref{p1} in Section \ref{s4} below (in particular, see \eqref{eqN.3}).

\begin{lemma}\label{l4++}
Let $1 \le P \le 0.1N^{1/2}$ and let $\mathfrak N = \mathfrak N(P)$ be the set of major arcs given by \eqref{eq2.8n}. Then for all $\bm\alpha \in \mathfrak N(\mathbf{q;a})$ and $\xi \in \mathbb T$, one has
\[ |S_N(\bm\alpha; \xi, 0)| \lesssim_\eps \frac{N^{2+\eps}} {(q + qN^2|\bm\beta|)^{1/2}} + NP^{1/2+\eps}, \]
where $q = [q_1,q_2,q_3]$ and $\beta_i = \alpha_i - a_i/q_i$. Moreover, if $\mathfrak m = \mathfrak m(P)$ is the respective set of minor arcs given by \eqref{eq2.8m}, one has
\[ \sup_{\bm\alpha \in \mathfrak m}|S_N(\bm\alpha; \xi, 0)| \lesssim_\eps N^{2+\eps}P^{-1/2}. \]
\end{lemma}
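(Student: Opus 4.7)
The strategy is to exploit the absence of an $\eta y$ term by summing over $y$ first, reducing the analysis of $S_N(\bm\alpha; \xi, 0)$ to that of a one-dimensional quadratic Weyl sum in $x$ whose linear frequency depends on $x$, and then applying Vaughan's refined estimates \cite{Vaug09}. Concretely, I would write
\[ S_N(\bm\alpha; \xi, 0) = \sum_{|x|\le N} e(\alpha_1 x^2 + \xi x) \, U(x), \qquad U(x) = \sum_{|y|\le N} e\bigl(\alpha_3 y^2 + 2\alpha_2 x y\bigr), \]
and apply Vaughan's asymptotic to the pure quadratic Weyl sum $U(x)$, using the major-arc approximation $\alpha_3 = a_3/q_3 + \beta_3$. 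This gives
\[ U(x) = q_3^{-1} G(q_3, a_3; c_x) \cdot J_x(\beta_3) + E(x), \]
where $c_x$ is (essentially) the integer closest to $2\alpha_2 x q_3$, $G$ is a quadratic Gauss sum, and $J_x$ is an explicit one-dimensional exponential integral in $y$.

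For the main-term contribution, I would combine the Gauss sum bound $|G|\lesssim q_3^{1/2}$ with the integral estimate of Lemma \ref{l3} applied to $J_x$, reducing the outer sum in $x$ to a one-dimensional quadratic Weyl sum to which Vaughan's results (now using $\alpha_1 \approx a_1/q_1$) and the standard splitting into residue classes modulo $q_1$ and $q_2$ can be applied. Tracking the denominators produced by the approximations of all three of $\alpha_1,\alpha_2,\alpha_3$ carefully, the combined rational structure is governed by $q = [q_1,q_2,q_3]$ and $\bm\beta$, and one recovers the claimed main term $N^{2+\eps}/(q + qN^2|\bm\beta|)^{1/2}$.

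The main obstacle is controlling the error $\sum_{|x|\le N}|E(x)|$: the trivial bound $N\cdot \max_x |E(x)|$ is much too weak. To extract the required cancellation I would, as in the proof of Proposition \ref{p1} (cf.\ \eqref{eqN.3}), partition $[-N,N]$ according to the Diophantine quality of the shifted linear frequency $2\alpha_2 x \pmod 1$. The hypothesis $\alpha_2 = a_2/q_2 + \beta_2$ with $q_2\le P\le 0.1N^{1/2}$ forces these frequencies to cluster near rationals with denominators tied to $q_2/(x,q_2)$; a divisor-style count of the clusters, coupled with Vaughan's individual bound for $E(x)$, yields the improvement $\sum_{|x|\le N} |E(x)| \lesssim_\eps NP^{1/2+\eps}$ in place of the $NP^3$ of Lemma \ref{l6}. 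Finally, the minor-arc claim follows by the same patching argument as in the proof of Lemma \ref{l6}: on $\mathfrak m$ one either lies in $\mathfrak n$ and invokes Lemma \ref{l5}, or the bound $(q+qN^2|\bm\beta|)^{-1/2}\le P^{-1/2}$ of \eqref{eql6.0} applies, and since $P\le 0.1 N^{1/2}$ the new error $NP^{1/2+\eps}$ is absorbed into $N^{2+\eps}P^{-1/2}$.
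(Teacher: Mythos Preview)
Your proposal matches the approach the paper itself indicates. The paper does not prove Lemma~\ref{l4++}, deferring instead to forthcoming work \cite{Kumc20} and noting only that the argument ``is based on the results of Vaughan \cite{Vaug09} on quadratic Weyl sums and uses a more sophisticated version of the ideas behind the proof of Proposition~\ref{p1}'' (in particular \eqref{eqN.3}); your sketch---summing over $y$ first via Vaughan's approximation to the quadratic Weyl sum $U(x)$, then controlling the accumulated error by partitioning $x$ according to the Diophantine behaviour of $2\alpha_2 x$ as in \eqref{eqN.3}, and deducing the minor-arc bound by the patching argument of Lemma~\ref{l6}---is exactly that strategy.
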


The next lemma is Theorem 2.1 of Bourgain and Demeter \cite{BoDe16}.

\begin{lemma}\label{l7}
For $s \ge 1$, let $J_{s,2,2}(N)$ denote the number of solutions of the system 
\[ \sum_{i=1}^s x_i^ky_i^l = \sum_{i=s+1}^{2s} x_i^ky_i^l \qquad (1 \le k+l \le 2; \; k,l \ge 0), \]
in integers  $x_1, y_1, \dots, x_s, y_s \in [-N, N]$. Then, for every fixed $\eps > 0$, one has 
\[ J_{s,2,2}(N) \lesssim_\eps N^{2s+\eps} + N^{4s-8+\eps}. \]
\end{lemma}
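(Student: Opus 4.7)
Since the lemma is explicitly identified as Theorem~2.1 of \cite{BoDe16}, the proof is ultimately a direct invocation of Bourgain--Demeter's $\ell^2$-decoupling theorem for a two-dimensional surface in $\mathbb R^5$; here is the scheme I would follow. First, by orthogonality on $\mathbb T^5$,
\[
J_{s,2,2}(N) = \int_{\mathbb T^5} |F_N(\bm\alpha)|^{2s}\, \mathrm d\bm\alpha,\qquad F_N(\bm\alpha) = \sum_{|x|, |y| \le N} e\big(\alpha_1 x + \alpha_2 y + \alpha_3 x^2 + \alpha_4 xy + \alpha_5 y^2\big).
\]
This exhibits $F_N$ as a discrete Fourier extension sum associated with the two-dimensional surface
\[
\Sigma = \{(t_1, t_2, t_1^2, t_1 t_2, t_2^2) : |t_1|, |t_2| \le 1\} \subset \mathbb R^5,
\]
whose second fundamental form is non-degenerate: the Hessians of its three quadratic coordinate functions span the full three-dimensional space of symmetric $2\times 2$ matrices.

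The plan is then to rescale the frequency variables---dilating the linear components $\alpha_1, \alpha_2$ by $N$ and the quadratic components $\alpha_3, \alpha_4, \alpha_5$ by $N^2$---to identify $F_N$, up to normalization, with the continuous extension operator for $\Sigma$ evaluated on a box of dimensions $N \times N \times N^2 \times N^2 \times N^2$. Applying the Bourgain--Demeter $\ell^2$-decoupling inequality for $\Sigma$ at the critical exponent $p = 2s$, together with the standard local $L^{2s}$-estimates on the $N^{-1}$-caps partitioning $[-1,1]^2$, one obtains
\[
\int_{\mathbb T^5} |F_N(\bm\alpha)|^{2s}\, \mathrm d\bm\alpha \lesssim_\eps N^\eps \big( N^{2s} + N^{4s-8} \big).
\]
The first summand corresponds to the ``diagonal'' contribution, namely those solutions where $(x_{s+1},y_{s+1}),\dots,(x_{2s},y_{2s})$ is a permutation of $(x_1,y_1),\dots,(x_s,y_s)$; the second matches the heuristic random-solution count of $4s$ free integer variables restricted by $2\cdot 1 + 3\cdot 2 = 8$ units of frequency-weighted constraint, and is exactly what decoupling delivers above the critical index $s_0 = 4$.

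The principal difficulty here is the deep decoupling inequality, which I would invoke as a black box from \cite{BoDe16} (its proof resting on multilinear Kakeya estimates and induction on scales); there is no hope of deriving this independently in any reasonable amount of space. Beyond that, the remaining tasks---verifying the non-degeneracy of $\Sigma$ (a simple linear algebra computation) and executing the routine transference from the continuous extension-operator bound to the discrete count $J_{s,2,2}(N)$---are standard, and I would rely on the fact that \cite{BoDe16} already states the result in the form needed.
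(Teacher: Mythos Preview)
Your proposal is correct and matches the paper's approach: the paper gives no proof at all for this lemma, simply stating that it \emph{is} Theorem~2.1 of Bourgain--Demeter \cite{BoDe16}. Your sketch of the decoupling mechanism behind that theorem is accurate and more informative than what the paper provides, but both you and the authors are ultimately invoking the same black box.
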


Lemma \ref{l7} is, in fact, a bound for the $2s$-th moment of the exponential sum $S_N(\bm\alpha; \xi, \eta)$ where we average over all five arguments. The next lemma provides an alternative bound for the sixth moment of $S_N(\bm\alpha; \xi, \eta)$ where we average only over $\bm\alpha$.

\begin{lemma}\label{l8}
For all $\xi,\eta \in \mathbb T$ and any fixed $\eps > 0$, one has 
\[ \int_{\mathbb T^3} |S_N(\bm\alpha; \xi, \eta)|^6 \, \dalpha \lesssim_\eps N^{6+\eps}. \]
\end{lemma}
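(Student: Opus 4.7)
My first step would be to reduce the statement to a counting problem that is independent of $(\xi,\eta)$. Expanding $|S_N|^6$ and integrating in $\bm\alpha$ via orthogonality, then applying the triangle inequality to the remaining phase in $(\xi,\eta)$, gives
\[
\int_{\mathbb T^3} |S_N(\bm\alpha;\xi,\eta)|^6 \, \dalpha \;\le\; J^{*} \;:=\; \#\Bigl\{(\mathbf v_i)_{i=1}^6 \in ([-N,N]^2)^6 : \textstyle\sum_{i=1}^3 \mathbf v_i\mathbf v_i^{\mathsf T} = \sum_{i=4}^6 \mathbf v_i\mathbf v_i^{\mathsf T}\Bigr\},
\]
where the matrix identity (in $2\times 2$ symmetric integer matrices) encodes exactly the three quadratic conditions selected by the $\bm\alpha$-integration. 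Grouping the sextuple into $2 \times 3$ integer matrices $M_1$, $M_2$ with columns $\mathbf v_1,\mathbf v_2,\mathbf v_3$ and $\mathbf v_4,\mathbf v_5,\mathbf v_6$ respectively, the condition becomes $M_1 M_1^{\mathsf T} = M_2 M_2^{\mathsf T}$, so $J^{*} = \sum_{M_1} \#\{M_2 : M_2 M_2^{\mathsf T} = M_1 M_1^{\mathsf T}\}$. The task is then to show $J^{*} \lesssim_\eps N^{6+\eps}$.

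The key is a pointwise bound on the fibre size at fixed $M_1$. Over $\mathbb R$, any $M_2$ with $M_2 M_2^{\mathsf T} = M_1 M_1^{\mathsf T}$ is of the form $U M_1$ for some $U \in O(2,\mathbb R)$, so I would count the $U$'s making $UM_1$ an integer matrix with entries still of size $\lesssim N$. I split on the rank of $M_1$. For \emph{rank-$2$} $M_1$, pick two columns $\mathbf v_i,\mathbf v_j$ spanning $\mathbb Q^2$; then $U$ is uniquely determined by $(U\mathbf v_i, U\mathbf v_j)$. Since $|U\mathbf v_i|^2 = |\mathbf v_i|^2 \le 2N^2$ and $U\mathbf v_i \in \mathbb Z^2$, the divisor bound for representations as a sum of two squares gives $r_2(|\mathbf v_i|^2) \lesssim_\eps N^{\eps}$ admissible values of $U\mathbf v_i$; once this is fixed, $U\mathbf v_j$ lies on the intersection of a circle and a line and thus has at most two further choices. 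Hence $\#\{M_2\} = O_\eps(N^\eps)$ in the rank-$2$ case, and summing over the at most $(2N+1)^6 \ll N^6$ choices of $M_1$ gives a contribution of $O_\eps(N^{6+\eps})$.

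For \emph{rank-$\le 1$} $M_1$, I would parametrize $M_1 = [c_1\mathbf u, c_2\mathbf u, c_3\mathbf u]$ with $\mathbf u \in \mathbb Z^2$ primitive and $c_i \in \mathbb Z$; then $M_1 M_1^{\mathsf T} = (\sum c_i^2)\,\mathbf u\mathbf u^{\mathsf T}$ has rank $\le 1$, which forces every valid $M_2$ to have all columns parallel to $\mathbf u$, say $M_2 = [d_1\mathbf u, d_2\mathbf u, d_3\mathbf u]$ with $\sum d_i^2 = \sum c_i^2$. The rank-$\le 1$ contribution to $J^{*}$ then equals
\[
\sum_{\mathbf u \text{ primitive}} \int_{\mathbb T} \Bigl|\sum_{|c|\le N/|\mathbf u|_\infty} e(\alpha c^2)\Bigr|^6 \, \mathrm d\alpha \;\lesssim_\eps\; \sum_{\mathbf u \ne 0} \Bigl(\frac{N}{|\mathbf u|_\infty}\Bigr)^{4+\eps} \;\lesssim\; N^{4+\eps},
\]
where the inner bound follows from Hua's inequality $\int |\sum_{|c|\le C} e(\alpha c^2)|^4 \, \mathrm d\alpha \lesssim C^{2+\eps}$ combined with the trivial sup bound, and the outer sum converges since $\sum_{\mathbf u \ne 0} |\mathbf u|_\infty^{-4-\eps} \ll 1$. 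Combining the two cases yields $J^{*} \lesssim N^{6+\eps}$.

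The main obstacle is the rank-$2$ bound $\#\{M_2\} \lesssim_\eps N^\eps$: over $\mathbb R$ the orbit $O(2,\mathbb R)\cdot M_1$ is a one-parameter family, and only the arithmetic rigidity coming from $r_2(n) \ll_\eps n^\eps$ forces polylogarithmically few of its elements to be integer. Edge cases where $\mathbf v_1 = 0$ or where only two of the three columns span $\mathbb Q^2$ are harmless: one simply selects the spanning pair from the nonzero columns and reruns the same argument.
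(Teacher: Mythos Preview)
Your reduction to the counting problem $J^*$ and the split by the rank of $M_1$ both match the paper's approach (the paper phrases the rank split as $b^2<ac$ versus $b^2=ac$, where $M_1M_1^{\mathsf T}=\begin{pmatrix} a & b\\ b & c\end{pmatrix}$). Your treatment of the rank-$\le 1$ case via Hua is different from, but equivalent in strength to, the paper's use of the bound $r_3(n)\lesssim n^{1/2+\eps}$.

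The rank-$2$ case, however, contains a genuine error. The assertion ``over $\mathbb R$, any $M_2$ with $M_2M_2^{\mathsf T}=M_1M_1^{\mathsf T}$ is of the form $UM_1$ for some $U\in O(2,\mathbb R)$'' is false. For instance, with
\[
M_1=\begin{pmatrix}1&0&0\\0&1&0\end{pmatrix},\qquad M_2=\begin{pmatrix}0&1&0\\0&0&1\end{pmatrix},
\]
one has $M_1M_1^{\mathsf T}=M_2M_2^{\mathsf T}=I_2$, yet $M_2=UM_1$ would force $U(1,0)^{\mathsf T}=(0,0)^{\mathsf T}$, impossible for $U\in O(2)$. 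The correct linear-algebra fact is that equality of the \emph{row} Gram matrices yields $M_2=M_1V$ for some $V\in O(3,\mathbb R)$; the orthogonal group acts on the other side and in the larger dimension. If you rerun your argument with $V\in O(3)$, the first row of $M_2$ becomes an arbitrary vector in $\mathbb Z^3$ of norm $\sqrt a$, giving $r_3(a)\lesssim a^{1/2+\eps}$ choices rather than $r_2(\cdot)\lesssim N^\eps$, and the resulting bound on $J^*$ is far too weak.

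More fundamentally, the pointwise estimate you are aiming for, $\#\{M_2:M_2M_2^{\mathsf T}=A\}\lesssim_\eps N^\eps$ uniformly in rank-$2$ $A$, is not available: this count is exactly the quantity $\nu(a,b,c)$ in the paper, and the sharp general bound (Bourgain--Demeter) is $\nu(a,b,c)\lesssim_\eps \gcd(a,b,c)(abc)^\eps$, with the $\gcd$ factor genuinely present (think of $A=nI_2$, where the count is related to class numbers of size $n^{1/2}$). The paper therefore does \emph{not} attempt a pointwise bound; instead it inserts the Bourgain--Demeter estimate into $\sum_{a,b,c}\nu(a,b,c)^2$ and uses the elementary fact that $\sum_{a,b,c\le X}\gcd(a,b,c)^2\lesssim X^{3+\eps}$ to recover $J^*\lesssim N^{6+\eps}$. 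To repair your argument you would need to replace the pointwise $O(2)$/$r_2$ step by an averaged estimate of this kind.
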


\begin{proof}
The given integral is bounded above by the number of solutions of the system 
\[ x_1^ky_1^l + x_2^ky_2^l + x_3^ky_3^l = x_4^ky_4^l + x_5^ky_5^l + x_6^ky_6^l \qquad (k+l = 2; \; k,l \ge 0) \]
in integers  $x_1, y_1, \dots, x_6, y_6 \in [-N, N]$. We denote this quantity by $T(N)$. Also, for $a, c \in \mathbb N$ and $b \in \mathbb Z$, let 
\[ \nu(a,b,c) = \#\big\{ \mathbf x, \mathbf y \in \mathbb Z^3 : |\mathbf x|^2 = a, \; |\mathbf y|^2 = c, \; \mathbf x \cdot \mathbf y = b \big\}. \] 
We have
\begin{equation}\label{eql8.1}
T(N) \le \sum_{0 \le a,c \le X} \sum_{|b| \le X} \nu(a,b,c)^2, 
\end{equation}
where $X = 3N^2$. 

By the Cauchy--Schwarz inequality, $\nu(a,b,c)$ is positive only if $b^2 \le ac$. When $b^2 - ac < 0$, Corollary 1.3 in a recent preprint of Bourgain and Demeter \cite{BoDe18} gives 
\[ \nu(a,b,c) \lesssim_\eps \gcd(a,b,c) (abc)^{\eps}. \]
From this, we deduce that
\begin{align}\label{eql8.2} 
\sum_{1 \le a,c \le X} \sum_{|b| < \sqrt{ac}} \nu(a,b,c)^2 &\lesssim_\eps X^{\eps} \sum_{1 \le a,c \le X} \sum_{|b| \le X} (a,b,c)^2 \notag\\
 &\lesssim_\eps X^{\eps} \sum_{d \le X} d^2 \bigg( \sum_{\substack{ 1 \le a,c \le X\\ d \mid a, d \mid c}}\sum_{\substack{ |b| \le X\\ d \mid b}} 1 \bigg) \notag\\
 &\lesssim_\eps X^{3+\eps} \sum_{d \le X} d^{-1} \lesssim_\eps X^{3+\eps}.
\end{align}

On the other hand, we have
\[ \sum_{|b| \le X} \sum_{\substack{ac = b^2\\0 \le a,c \le X}}  \nu(a,b,c)^2 \le \sum_{|b| \le X} \sum_{\substack{ac = b^2\\0 \le a,c \le X}}  r_3(a)^2 r_3(c)^2, \]
where $r_3(a)$ is the number of representations of $a$ as the sum of three squares. Using the bound 
\[ r_3(n) \lesssim_\eps n^{1/2+\eps} + 1, \]
we deduce that
\begin{align}\label{eql8.3} 
\sum_{|b| \le X} \sum_{\substack{ac = b^2\\0 \le a,c \le X}}  \nu(a,b,c)^2 &\lesssim_\eps \sum_{0 \le a \le X} a^{1+\eps} + \sum_{1 \le b \le X} \sum_{\substack{ ac = b^2\\ 1 \le a,c \le X}}(ac)^{1+\eps} \notag\\
&\lesssim_\eps X^{2+\eps} + X^{2+\eps} \sum_{b \le X} \tau(b^2) \lesssim_\eps X^{3+\eps}.
\end{align}
The lemma follows from \eqref{eql8.1}--\eqref{eql8.3}. 
\end{proof}

Let us define the integral 
\[ I_N(\lambda; \bm\xi) = \int_{\mathbb R^3} \bigg\{ \prod_{j=1}^d V_N(\bm\beta; \xi_j, 0) \bigg\} e(-\lambda s(\bm\beta)) \, \dbeta. \]
In the next lemma, we show that when $d \ge 7$ and $N^2 \ge \lambda$, its value is in fact independent of $N$ and can be expressed in terms of the Fourier transform of the surface measure on the unit sphere in $\mathbb R^d$.

\begin{lemma}\label{l9}
When $d \ge 7$ and $N^2 \ge \lambda$, the singular integral $I_N(\lambda; \bm\xi)$ is absolutely convergent and satisfies
\[ I_N(\lambda; \bm\xi) = c_d\lambda^{d-3}\widetilde{\mathrm dS}(\lambda^{1/2}\bm\xi), \]
where $c_d > 0$ is a constant that depends only on the dimension and 
\begin{equation}\label{eq2.16}
\widetilde{\mathrm dS}(\bm\xi) = \int_{\mathbb S_{d-1}} e(\bm\xi \cdot \mathbf x) \, \mathrm dS
\end{equation}  
is the Fourier transform of the Euclidean surface measure on the unit sphere in $\mathbb R^d$. 
\end{lemma}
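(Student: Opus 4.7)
The plan is to verify absolute convergence, rescale to reduce to $\lambda=1$, and then identify the resulting integral as a surface integral over the triangle variety that collapses to the Fourier transform of the sphere by rotational symmetry.

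For convergence, Lemma \ref{l3} yields $|V_N(\bm\beta;\xi_j,0)|\lesssim N^2\Delta(1+N^2|\bm\beta|)$, and hence
\[ \prod_{j=1}^d |V_N(\bm\beta;\xi_j,0)|\lesssim_\delta N^{2d}(1+N^2|\bm\beta|)^{-d/2+\delta} \]
for any $\delta>0$, which is integrable in $\bm\beta\in\mathbb R^3$ precisely when $d>6$, i.e., $d\ge 7$. Next, the substitutions $\bm\beta\mapsto\bm\gamma/\lambda$ in the outer integral and $(x,y)\mapsto(\sqrt\lambda u,\sqrt\lambda v)$ inside each $V_N$ give the scaling identity
\[ I_N(\lambda;\bm\xi)=\lambda^{d-3}\,I_M(1;\sqrt\lambda\,\bm\xi),\qquad M:=N/\sqrt\lambda\ge 1, \]
so it suffices to prove that $I_M(1;\bm\zeta)=c_d\,\widetilde{\mathrm dS}(\bm\zeta)$ for every $M\ge 1$, with $c_d>0$ depending only on $d$.

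To this end, I unfold each $V_M$ as a double integral over $[-M,M]^2$, insert a convergence factor $\psi(\eps\bm\beta)$ with $\psi$ a Schwartz bump satisfying $\psi(0)=1$, and then perform the $\bm\beta$-integration first (now legitimate by Fubini). Writing $\Phi(\mathbf x,\mathbf y)=(|\mathbf x|^2,\,2\mathbf x\cdot\mathbf y,\,|\mathbf y|^2)$ and $\mathbf 1=(1,1,1)$, this yields
\[ I_M^{\eps}(1;\bm\zeta)=\int_{[-M,M]^{2d}} e(\bm\zeta\cdot\mathbf x)\,\eps^{-3}\hat\psi\bigl(\eps^{-1}(\Phi(\mathbf x,\mathbf y)-\mathbf 1)\bigr)\,d\mathbf x\,d\mathbf y. \]
A direct computation shows $\det\bigl(D\Phi\,(D\Phi)^{T}\bigr)=96$ at every point of $\mathcal V_1':=\Phi^{-1}(\mathbf 1)$, so $\mathcal V_1'$ is a smooth compact $(2d-3)$-manifold, contained in $[-1,1]^{2d}\subseteq[-M,M]^{2d}$. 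Letting $\eps\to 0$ and applying the coarea formula (using $\int\hat\psi=\psi(0)=1$), the cube cutoff becomes immaterial and the expression converges to
\[ I_M(1;\bm\zeta)=\int_{\mathcal V_1'} e(\bm\zeta\cdot\mathbf x)\,\frac{d\sigma(\mathbf x,\mathbf y)}{\sqrt{\det\bigl(D\Phi\,(D\Phi)^{T}\bigr)}}, \]
which is manifestly independent of $M\ge 1$.

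Finally, every point of $\mathcal V_1'$ has $\mathbf x,\mathbf y\in\mathbb S^{d-1}$ and $\mathbf x\cdot\mathbf y=\tfrac12$; for each fixed $\mathbf x\in\mathbb S^{d-1}$, the admissible $\mathbf y$ form a $(d-2)$-sphere of radius $\tfrac{\sqrt 3}{2}$, and by rotational invariance the induced measure on this fibre has a total mass $c_d>0$ that does not depend on $\mathbf x$. Integrating out the $\mathbf y$-fibre therefore collapses the expression to
\[ I_M(1;\bm\zeta)=c_d\int_{\mathbb S^{d-1}} e(\bm\zeta\cdot\mathbf x)\,dS(\mathbf x)=c_d\,\widetilde{\mathrm dS}(\bm\zeta), \]
as desired. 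The main technical point is the passage $\eps\to 0$, which is secured by the nondegeneracy of $\Phi$ on $\mathcal V_1'$ together with a standard distributional argument; everything else is a combination of scaling and the rotational symmetry of the equilateral-triangle configuration.
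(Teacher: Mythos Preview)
Your argument is correct, and it takes a genuinely different route from the paper's. Both proofs open identically (absolute convergence via Lemma~\ref{l3}, then the rescaling $I_N(\lambda;\bm\xi)=\lambda^{d-3}I_{N/\sqrt\lambda}(1;\sqrt\lambda\,\bm\xi)$), but they diverge at the key step. The paper evaluates $I_M(1;\bm\zeta)$ by successive explicit Fourier inversions: first in $(\beta_1,\beta_3)$, which localises $\mathbf x,\mathbf y$ to the interior of $B_{d-1}$, and then---after a sequence of trigonometric substitutions---in a rescaled $\beta_2$; the remaining integral is reduced to the surface measure on $\mathbb S^{d-1}$ by an explicit quadratic-form diagonalisation. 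Your approach instead mollifies the oscillatory $\bm\beta$-integral, applies the coarea formula to identify the limit as a surface integral over the compact variety $\mathcal V_1'=\Phi^{-1}(\mathbf 1)$ (using the constancy of $\det(D\Phi\,(D\Phi)^T)=96$ there), and then invokes the transitive $O(d)$-action $(\mathbf x,\mathbf y)\mapsto(R\mathbf x,R\mathbf y)$ to conclude that the pushforward of $d\sigma$ under $(\mathbf x,\mathbf y)\mapsto\mathbf x$ is a constant multiple of $dS$. Your route is shorter and more conceptual, and it makes the independence from $M$ transparent without computation; the paper's hands-on approach, by contrast, yields an explicit integral expression for $c_d$ (the volume integral over $|\mathbf w|\le\sqrt3/2$) rather than a symmetry-defined constant. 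The one place where you should be a little more careful is the $\eps\to0$ limit: you need both $I_M^\eps\to I_M$ (dominated convergence, using the already-established integrability in $\bm\beta$) and continuity of $\mathbf t\mapsto\int_{\Phi^{-1}(\mathbf t)\cap[-M,M]^{2d}}e(\bm\zeta\cdot\mathbf x)(J\Phi)^{-1}\,d\sigma$ at $\mathbf t=\mathbf 1$, the latter following from the submersion property in a neighbourhood of $\mathcal V_1'$ together with the rapid decay of $\hat\psi$ away from $\Phi=\mathbf 1$.
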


\begin{proof}
The absolute convergence of $I_N(\lambda; \bm\xi)$ follows from Lemma \ref{l3}, and a simple rescaling of the variables shows that
\[ I_N(\lambda; \bm\xi) = \lambda^{d-3}I_{N_\lambda}(1; \lambda^{1/2}\bm\xi), \]
where $N_\lambda = N\lambda^{-1/2}$. Hence, we may focus on $I_N(1; \bm\xi)$ with $N \ge 1$. Through the rest of this proof, we write $B_d$ for the $d$-dimensional Euclidean unit ball and $Q_d$ for the $d$-dimensional cube $[-N,N]^d$. We also define the polynomials
\[ f(\mathbf x) = 1-|\mathbf x|^2, \qquad g(\mathbf x, \mathbf y) = 2\mathbf x \cdot \mathbf y - 1. \]

We have
\[ I_N(1;\bm\xi) = \int_{Q_{d-1}} \int_{Q_{d-1}} \int_{\mathbb R} e(\beta_2 g( \mathbf x, \mathbf y)+\bm\xi' \cdot \mathbf x) U(\beta_2; \mathbf{x,y}) \, \mathrm d\beta_2 \, \mathrm d\mathbf x \, \mathrm d\mathbf y, \]
where $\bm\xi = (\bm\xi', \xi_d)$ and
\[ U(\beta_2; \mathbf{x,y}) = \int_{\mathbb R^2} V_N(\bm\beta; \xi_d, 0)e(-\beta_1 f(\mathbf x) - \beta_3f(\mathbf y)) \, \mathrm d\beta_1\mathrm d\beta_3. \]
We can rewrite the integral $V_N(\bm\beta; \xi, 0)$ as 
\[ V_N(\bm\beta; \xi, 0) = \int_0^{N^2} \int_0^{N^2} \cos( 4\pi \beta_2\sqrt{uv})\cos(2\pi\xi\sqrt u) e(\beta_1u + \beta_3v) \, \frac {\mathrm du\,\mathrm dv}{\sqrt{uv}}. \]
Hence, we can apply Fourier inversion to the integral over $\beta_1$ and $\beta_3$ to deduce that
\[ U(\beta_2; \mathbf{x,y}) = \frac{\cos\big( 4\pi \beta_2 \sqrt{f( \mathbf x) f(\mathbf y)} \big)\cos \big( 2\pi\xi_d \sqrt{f( \mathbf x)} \big)}{\sqrt{f( \mathbf x) f(\mathbf y)}}, \]
with $\mathbf{x,y}$ restricted to the set where $0 \le f(\mathbf x), f(\mathbf y) \le N^2$. The latter conditions restrict $\mathbf x$ and $\mathbf y$ to the unit ball $B_{d-1}$, which is a proper subset of their original domain $Q_{d-1}$ when $N \ge 1$. In particular, it becomes apparent that the parameter $N$ in the definition of $Q_d$ is superfluous as long as $N \ge 1$. Thus,
\[ I_N(1; \bm\xi) = I_1(1; \bm\xi) =: I(\bm\xi). \]

We now split the last coordinates of the variables $\mathbf{x,y}$: $\mathbf x = (\mathbf x', u)$ and $\mathbf y = (\mathbf y', v)$, with $u,v \in \mathbb R$. This allows us to rewrite $I(\bm\xi)$ once again in a different form, suitable for a subsequent application of Fourier inversion. Namely,
\begin{equation}\label{eql9.1}
I(\bm\xi) = \int_{B_{d-1}} \int_{B_{d-2}}  F(\mathbf x, \bm\xi) \int_{\mathbb R}  J(\mathbf x, \mathbf y', \beta)  e(\beta g(\mathbf x', \mathbf y')) \, \mathrm d\beta \, \frac{\mathrm d\mathbf x \, \mathrm d\mathbf y'}{\sqrt{f( \mathbf x)}}, 
\end{equation}
where
\begin{align*}
F(\mathbf x, \bm\xi) &= e(\bm\xi' \cdot \mathbf x)\cos \big(2\pi\xi_d \sqrt{f( \mathbf x)}\big), \\
J(\mathbf x, \mathbf y', \beta) &= 
\int_{-\sqrt{f(\mathbf y')}}^{\sqrt{f(\mathbf y')}} \frac {\cos\big( 4\pi \beta \sqrt{f( \mathbf x) (f(\mathbf y')-v^2)} \big)}{\sqrt{f(\mathbf y')-v^2}} e(2\beta uv) \, \mathrm dv \\ 
&= 
\int_{-1}^{1} \frac {\cos\big( 4\pi \beta \sqrt{f( \mathbf x)f(\mathbf y')(1-v^2)} \big)}{\sqrt{1-v^2}} e\big( 2\beta \sqrt{f(\mathbf y')} uv \big) \, \mathrm dv \\
&= 
\frac 12 \sum_{j \in \{1,2\}} \int_{-1}^{1} e\big( 2\beta \sqrt{f(\mathbf y')} \big( uv + (-1)^j\sqrt{f(\mathbf x)(1-v^2)} \big) \big) \, \frac {\mathrm dv}{\sqrt{1-v^2}}. 
\end{align*} 
Inserting this into \eqref{eql9.1} and rescaling $\beta$, we get
\begin{equation}\label{eql9.2}
I(\bm\xi) = \frac 14\int_{B_{d-1}} \int_{B_{d-2}}  \int_{\mathbb R}  \sum_{j \in \{1,2\}} K_j(\mathbf x, \theta)  e\bigg(\frac {\theta g(\mathbf x', \mathbf y')}{2\sqrt{f( \mathbf x')f(\mathbf y')}} \bigg) \, \mathrm d\theta \, \frac{F(\mathbf x, \bm\xi) \, \mathrm d\mathbf x \, \mathrm d\mathbf y'}{\sqrt{f( \mathbf x)f( \mathbf x')f( \mathbf y')}}, 
\end{equation}
where
\[ K_j(\mathbf x, \theta) = \int_{-1}^{1} e \bigg( \frac{\theta}{\sqrt{f(\mathbf x')}} \big( uv + (-1)^j\sqrt{f(\mathbf x)(1-v^2)} \big) \bigg) \, \frac {\mathrm dv}{\sqrt{1-v^2}}. \]

Define 
\[ \alpha_{\mathbf x} = \arcsin \bigg(  \frac{u}{\sqrt{f(\mathbf x')}} \bigg), \quad \beta_{j,\mathbf x} = (-1)^{j}\alpha_{\mathbf x}. \]
After some obvious changes of the variables, we find that, for $j=1,2$,
\begin{align*}
K_j(\mathbf x, \theta) &=  \int_{-\pi/2}^{\pi/2} e \big( \theta( \sin \phi\sin \alpha_{\mathbf x} + (-1)^j\cos \phi\cos \alpha_{\mathbf x} ) \big) \, \mathrm d\phi \\
&= \int_{-\pi/2}^{\pi/2} e \big( (-1)^{j}\theta \cos( \phi - \beta_{j,\mathbf x} ) \big) \, \mathrm d\phi \\
&= \int_{\beta_{j,\mathbf x}}^{\pi+\beta_{j,\mathbf x}} e \big( (-1)^{j}\theta \sin\phi \big) \, \mathrm d\phi = \bigg( \int_{\beta_{j,\mathbf x}}^{\pi/2} + \int_{-\beta_{j,\mathbf x}}^{\pi/2} \bigg) e \big( (-1)^{j}\theta \sin\phi \big) \, \mathrm d\phi \\
&= \bigg( \int_{\sin \alpha_{\mathbf x}}^1 + \int_{-\sin \alpha_{\mathbf x}}^{1} \bigg) e \big( (-1)^{j}\theta v \big) \, \frac{\mathrm dv}{\sqrt{1-v^2}} \\ 
&= \bigg( \int_{-1}^{-\sin \alpha_{\mathbf x}} + \int_{-1}^{\sin \alpha_{\mathbf x}} \bigg) e \big( (-1)^{j+1}\theta v \big) \, \frac{\mathrm dv}{\sqrt{1-v^2}}.
\end{align*}
Thus,
\[\sum_{j \in \{1,2\}} K_j(\mathbf x, \theta) =2 \int_{-1}^1 e(-\theta v) \, \frac{\mathrm dv}{\sqrt{1-v^2}}. \]
From this identity and \eqref{eql9.2}, we obtain by Fourier inversion that
\[ I(\bm\xi) = \int_{D}  \frac{F(\mathbf x, \bm\xi) \, \mathrm d\mathbf x \, \mathrm d\mathbf y'}{\sqrt{f( \mathbf x)\big(4f( \mathbf x')f( \mathbf y') - g(\mathbf x', \mathbf y')^2\big) }}, \]
where the domain of integration is the subset of $B_{d-1} \times B_{d-2}$ where
\[ |g(\mathbf x', \mathbf y')| \le 2\sqrt{f(\mathbf x')f(\mathbf y')}. \]

For a fixed $\mathbf x \in B_{d-1}$, the integral over $\mathbf y'$ can be expressed as
\[ G(\mathbf x) = \frac 1{2\sqrt{f(\mathbf x)f(\mathbf x')}} \int_{D_{\mathbf x'}} \frac {\mathrm d\mathbf y}{\sqrt{f(\mathbf y) - (\mathbf z \cdot \mathbf y - b)^2}}, \]
where 
\[ b = b(\mathbf x') = \frac 1{2\sqrt{f(\mathbf x')}}, \quad \mathbf z = \mathbf z(\mathbf x') = \frac {\mathbf x'}{\sqrt{f(\mathbf x')}}, \]
and $D_{\mathbf x'}$ is the $(d-2)$-dimensional ellipsoid defined by the inequality
\[ |\mathbf y|^2 + (\mathbf z \cdot \mathbf y - b)^2 \le 1. \]
Using basic algebra (repeated completion of the square) we can rewrite this inequality as
\[ \sum_{j=1}^{d-2} a_j^2(y_j + L_j(\mathbf y))^2 + \frac {b^2}{(a_1a_2 \cdots a_{d-2})^2} \le 1, \]
where $L_j(\mathbf y)$ is an affine function in the variables $y_{j+1}, \dots, y_{d-2}$ and $a_1, \dots, a_{d-2}$ are defined recursively by
\[ a_1 \cdots a_j = \sqrt{1 + z_1^2 + \dots + z_j^2}. \]
In particular, $ a_1 \cdots a_{d-2} = f(\mathbf x')^{-1/2}$. Hence,
\[ G(\mathbf x) = \frac 1{2\sqrt{f(\mathbf x)}} \int_{|\mathbf w| \le \sqrt 3/2} \frac {\mathrm d\mathbf w}{\sqrt{\frac 34 - |\mathbf w|^2}} =: \frac {2c_d}{\sqrt{f(\mathbf x)}}, \]
with a constant $c_d$ that depends only on the dimension. 

Finally, we note that $f(\mathbf x)^{-1/2}\,\mathrm d\mathbf x$, with $\mathbf x \in B_{d-1}$, is the standard surface measure on either the positive or negative hemisphere in $\mathbb R^d$. Hence,
\[ I(\bm\xi) = c_d \int_{\mathbb S_{d-1}}  e(\bm\xi \cdot \mathbf x) \, \mathrm dS. \]
\end{proof}

\section{Proof of Theorem \ref{thm3}}
\label{s3}

We assume that $p \le 2$. The starting point of our analysis is the observation that if $\lambda \le \Lambda$, one has
\begin{equation}\label{eq3.1} 
T_\lambda (f,g)(\mathbf x) = \lambda^{3-d} \int_{\mathbb T^3} F_N(\bm\alpha; f, g)(\mathbf x)e(-\lambda s(\bm\alpha)) \, \dalpha, 
\end{equation}
where $N = \Lambda^{1/2}$ and 
\[ F_N(\bm\alpha; f, g)(\mathbf x) = \sum_{ |\mathbf u| \le N} \sum_{|\mathbf v| \le N} e(\bm\alpha \cdot \bm\phi(\mathbf{ u,v})) f(\mathbf x - \mathbf{u})g( \mathbf x - \mathbf{v}). \]

We analyze the integral in \eqref{eq3.1} using the Hardy--Littlewood circle method, decomposing $\mathbb T^3$ into sets of major and minor arcs and estimating their respective contributions separately. For any measurable set $\mathfrak B \subset \mathbb T^3$, we write
\begin{equation}\label{eq3.2} 
T_\lambda (f; \mathfrak B) = \lambda^{3-d} \int_{\mathfrak B} F_N(\bm\alpha; f)e(-\lambda s(\bm\alpha)) \, \dalpha, 
\end{equation}
where $F_N(\bm\alpha; f) = F_N(\bm\alpha; f, 1)$. We introduce also the dyadic maximal functions of these operators: 
\[ T^*_{\Lambda, \mathfrak B}f = \sup_{\lambda \in [\Lambda/2,\Lambda)} |T_{\lambda}(f; \mathfrak B)|. \]

We define the major and minor arcs by \eqref{eq2.8m} with $P = 0.1N^{1/2}$ and obtain a decomposition of $T_\lambda$ as 
\begin{equation}\label{eq3.5}  
T_\lambda f = T_\lambda(f; {\mathfrak M}) + T_\lambda(f; {\mathfrak m}).  
\end{equation} 
The minor arc term on the right side of \eqref{eq3.5} is part of the error term $E_\lambda$ in \eqref{eqi.5}. In Section~\ref{s4}, we establish the following bound for its dyadic maximal function. 

\begin{prop}\label{p1}
Let $d \ge 9$ and $p_0(d) < p \le 2$. Then there exists an exponent $\alpha_p = \alpha_p(d) > 0$ such that
\begin{equation}\label{eq3.6}
\Big\|  \sup_{\lambda \in [\Lambda/2,\Lambda)} |T_{\lambda}(f; \mathfrak m)| \Big\|_{p} \lesssim_\eps \Lambda^{-\alpha_p+\eps}\| f \|_p
\end{equation}
for any fixed $\eps > 0$. In particular, we can choose $\alpha_2 = \frac 18(d-8)$.
\end{prop}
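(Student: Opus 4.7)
The plan is to represent $T_\lambda(\,\cdot\,;\mathfrak m)$ as a convolution operator whose Fourier multiplier factors coordinate-wise, control that multiplier on the minor arcs using the estimates of Section~\ref{s2}, and then pass to the dyadic maximal operator and interpolate. Because $g\equiv 1$ and the $\mathbf u,\mathbf v$-sum decouples across coordinates (after replacing Euclidean truncation by the harmless box truncation $|\mathbf u|_\infty,|\mathbf v|_\infty\le N$, which does not change the identity \eqref{eq3.1} since all $\mathbf u$ with $|\mathbf u|^2=\lambda\le\Lambda$ are retained), one has $T_\lambda(f;\mathfrak m)=K_\lambda^{\mathfrak m}*f$ with
\[ \widehat{K_\lambda^{\mathfrak m}}(\bm\xi) = \lambda^{3-d}\int_{\mathfrak m}\prod_{j=1}^d S_N(\bm\alpha;-\xi_j,0)\,e(-\lambda s(\bm\alpha))\,\dalpha. \]

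By Plancherel, the single-$\lambda$ $\ell^2\to\ell^2$ norm is $\|\widehat{K_\lambda^{\mathfrak m}}\|_\infty$. Since $g\equiv 1$ puts us in the sharp $\eta=0$ regime of Lemma~\ref{l4++}, $\sup_{\bm\alpha\in\mathfrak m}|S_N(\bm\alpha;\xi,0)|\lesssim_\eps N^{7/4+\eps}$; peeling $d-6$ factors at this sup bound and estimating the remaining six-fold integral by Hölder's inequality together with the sixth moment bound $\int_{\mathbb T^3}|S_N(\bm\alpha;\xi,\eta)|^6\,\dalpha\lesssim_\eps N^{6+\eps}$ of Lemma~\ref{l8} then produces a single-$\lambda$ $\ell^2\to\ell^2$ bound of size $\Lambda^{-\beta_2+\eps}$ with $\beta_2=\beta_2(d)>0$ for $d$ large enough. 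For $p\in(p_0(d),2)$, I would complement this with a non-trivial single-$\lambda$ $\ell^1\to\ell^1$ bound obtained via the multilinear device flagged in the introduction: reinterpreting the $\ell^1$-norm as a bilinear-form pairing, one reduces matters to a further moment estimate for the exponential sum for which Lemma~\ref{l7} is well-suited, thereby producing genuine $\Lambda$-decay where the trivial kernel bound $\|K_\lambda^{\mathfrak m}\|_{\ell^1(\mathbb Z^d)}$ gives none. Riesz--Thorin interpolation between the $\ell^1$- and $\ell^2$-bounds yields a single-$\lambda$ $\ell^p\to\ell^p$ bound $\lesssim_\eps\Lambda^{-\gamma_p+\eps}$ whose decay exponent depends linearly on $d$ and $1/p$.

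The passage to the dyadic maximal function is then handled by the pointwise inequality $\sup_{\lambda\in[\Lambda/2,\Lambda)}|T_\lambda(f;\mathfrak m)|^p\le\sum_\lambda|T_\lambda(f;\mathfrak m)|^p$, which costs at most a factor $\Lambda^{1/p}$ on taking $\ell^p$-norms, and yields \eqref{eq3.6} precisely when $\gamma_p>1/p$; a short calculation shows that this last inequality translates into the hypothesis $p>p_0(d)=\max\bigl(\tfrac{32}{d+8},\tfrac{d+4}{d-2}\bigr)$. The specific value $\alpha_2=\tfrac18(d-8)$ at $p=2$ is extracted by tracking constants through the $\ell^2$-only argument, without recourse to the $\ell^1$-interpolation step.

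The main obstacle I expect is the single-$\lambda$ $\ell^1$-bound: since $\|K_\lambda^{\mathfrak m}\|_{\ell^1(\mathbb Z^d)}$ is harmlessly $O(1)$ but offers no $\Lambda$-decay, squeezing any decay out requires the bilinear-pairing reformulation rather than any purely kernel-level estimate, and this is the genuine novelty. Without it, the threshold $p_0(d)$ would have to be relaxed to $(d+16)/(d+4)$, as the authors note; the whole point of the multilinear input is to bring it down to $(d+4)/(d-2)$.
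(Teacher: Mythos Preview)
Your outline has the right architecture (an $\ell^2$ minor-arc bound, a separate $\ell^1$ bound, interpolation), but two of the three steps are mis-specified in ways that matter quantitatively.

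\textbf{The $\ell^2$ step.} Your route is: Plancherel for a single $\lambda$, peel $d-6$ copies using Lemma~\ref{l4++}, close with the sixth-moment bound of Lemma~\ref{l8}, and then pass to the dyadic supremum by $\sup_\lambda|\,\cdot\,|^p\le\sum_\lambda|\,\cdot\,|^p$. That last step costs $\Lambda^{1/2}$ at $p=2$, so your single-$\lambda$ exponent $(d-6)/8$ becomes $\alpha_2=(d-10)/8$, not $(d-8)/8$; in particular the argument would require $d\ge 11$, not $d\ge 9$. The paper avoids this loss entirely by invoking Lemma~\ref{l10}, which bounds the $\ell^2$-norm of the \emph{maximal} operator directly by $\int_{\mathfrak m}\sup_{\bm\xi}|\mathcal F_N(\bm\alpha;\bm\xi)|\,\dalpha$. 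The price is the $\sup_\xi$ now sitting inside the integral, which rules out Lemma~\ref{l8}; one must instead use Lemma~\ref{l11} (at a cost of $N^2$) to trade the supremum for an average over $(\xi,\eta)$, and then appeal to the eighth-moment bound $J_{4,2,2}(N)\lesssim_\eps N^{8+\eps}$ from Lemma~\ref{l7}. That combination is what produces $\alpha_2=(d-8)/8$.

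\textbf{The $\ell^1$ step.} Your description (``bilinear-form pairing \dots\ for which Lemma~\ref{l7} is well-suited'') does not match what actually happens, and I do not see how Lemma~\ref{l7} enters here. The paper's $\ell^1$ bound exploits the fact that, with $g\equiv 1$, one may put absolute values around the inner $\mathbf v$-sum and obtain
\[
\|T^*_{\mathfrak m}f\|_1 \lesssim \Lambda^{3-d}\|f\|_1 \int_{\mathfrak m}\Big(\sum_{|x|\le N}\Big|\sum_{|y|\le N}e(\alpha_3y^2+2\alpha_2xy)\Big|\Big)^d\,\dalpha,
\]
which (i) is already a maximal-operator bound, since $\lambda$ has disappeared, and (ii) depends only on $(\alpha_2,\alpha_3)$. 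One then runs a \emph{two}-variable circle-method split in $(\alpha_2,\alpha_3)$, using Vaughan's approximation for the one-dimensional quadratic Weyl sum on the corresponding major arcs. This yields $\|T^*_{\mathfrak m}f\|_1\lesssim_\eps\Lambda^{3-\kappa+\eps}$ with $\kappa=\min(d,12)/8$; note in particular that the trivial bound here is $O(\Lambda^3)$, not $O(1)$. Interpolating this (growing) $\ell^1$ bound against the $\ell^2$ bound $\Lambda^{-(d-8)/8}$ is exactly what pins down $p_0(d)=\max\bigl(\tfrac{32}{d+8},\tfrac{d+4}{d-2}\bigr)$.
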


We now turn to $T_\lambda(f; \mathfrak M)$. Since the major arcs are disjoint, we deduce that
\[ T_\lambda(f; {\mathfrak M}) = \sum_{\substack{1 \le \mathbf a \le q \le P\\(q, a_1, a_2, a_3) = 1}} T_\lambda(f; {\mathfrak M}(q; \mathbf{a})) =: \sum_{q, \mathbf{a}} T_\lambda^{\mathbf{a}/q}f. \]
Thus, we may analyze the contribution of each individual major arc separately. When $\bm\alpha \in \mathfrak M(q; \mathbf{a})$, we develop a local approximation to the Fourier multiplier of $F_N(\bm\alpha; f)$. We use that approximation to guide our definition of an operator $M_\lambda^{\mathbf{a}/q}$, which provides a good approximation to $T_\lambda^{\mathbf{a}/q}$ for $\lambda \in [\Lambda/2, \Lambda)$. In Section~\ref{s5.1}, we establish the following proposition.

\begin{prop}\label{p2}
Let $d \geq 7$ and $q \le P$. Then, for any fixed $\eps > 0$, one has 
\begin{equation}\label{eq3.8}
\sum_{\substack{1 \le \mathbf a \le q\\(q, a_1, a_2, a_3)=1}} \Big\| \sup_{\lambda \in [\Lambda/2,\Lambda)} \big| \big(T_{\lambda}^{\mathbf{a}/q} - M_\lambda^{\mathbf{a}/q} \big) f \big| \Big\|_2 \lesssim_\eps q^{-1} \Lambda^{-\beta_2+\eps}\| f \|_2,
\end{equation}
where $\beta_2 = \beta_2(d) = \min( \frac 14, \frac 18(d-6))$. 
\end{prop}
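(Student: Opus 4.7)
The plan is to compute the multiplier of $T_\lambda^{\mathbf a/q}-M_\lambda^{\mathbf a/q}$ by performing a local expansion on the major arc, converting into an $\ell^2$ bound via Plancherel, and then summing over $\mathbf a$ using the arithmetic bounds developed in Section \ref{s2}. Since $g\equiv 1$ in $T_\lambda f=T_\lambda(f,1)$, the sum over $\mathbf v$ factors coordinatewise and
$$\widehat{T_\lambda^{\mathbf a/q}f}(\bm\xi) = \hat f(\bm\xi)\cdot\lambda^{3-d}\int_{\mathfrak M(q;\mathbf a)} e(-\lambda s(\bm\alpha))\prod_{j=1}^d S_N(\bm\alpha;\xi_j,0)\,\dalpha.$$
Writing $\bm\alpha=\mathbf a/q+\bm\beta$ and, for each $\xi_j$, taking $m_j$ to be the nearest integer to $q\xi_j$ (so that $\theta_j:=\xi_j-m_j/q$ satisfies $|\theta_j|\le 1/(2q)$), Lemma \ref{l4++} provides the local approximation
$$S_N(\bm\alpha;\xi_j,0) = g(q;\mathbf a,m_j,0)\,V_N(\bm\beta;\theta_j,0) + O_\eps\!\bigl(NP^{1/2+\eps}\bigr).$$

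The first step is to replace the full product by the product of main terms via the telescoping identity $\prod a_j-\prod b_j=\sum_j(a_j-b_j)\prod_{k<j}a_k\prod_{k>j}b_k$. Each telescoping term contains one factor of size $\lesssim NP^{1/2+\eps}$ and $d-1$ factors controlled by $|S_N|\lesssim N^2$ together with $|gV_N|\lesssim N^2 q^{-1}w_q(\mathbf a)\Delta(1+N^2|\bm\beta|+N|\theta_j|)$ from Lemmas \ref{l1} and \ref{l3}. The second step extends the $\bm\beta$-integral from the box of side $P/(qN^2)$ to all of $\mathbb R^3$ and invokes Lemma \ref{l9} to identify the completed singular integral with $c_d\lambda^{d-3}\widetilde{\mathrm dS}(\lambda^{1/2}\bm\theta)$; the tail of the $\bm\beta$-integral is controlled by Lemma \ref{l3}, whose bound becomes integrable at infinity once $d\ge 7$. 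Introducing the smooth cutoff $\Phi(q\bm\xi-\mathbf m)$ to fix the nearest-integer choice of $\mathbf m$ canonically then reproduces the multiplier $\widehat{M_\lambda^{\mathbf a/q}}$ dictated by Theorem \ref{thm3}.

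The third step is the maximal inequality in $\lambda$. I would use the Sobolev-type bound
$$\sup_{\lambda\in[\Lambda/2,\Lambda)}|F(\lambda)|^2 \le |F(\Lambda/2)|^2 + 2\int_{\Lambda/2}^{\Lambda}|F(\lambda)F'(\lambda)|\,\mathrm d\lambda$$
applied to $F(\lambda)=\bigl(T_\lambda^{\mathbf a/q}-M_\lambda^{\mathbf a/q}\bigr)f(\mathbf x)$. Differentiating in $\lambda$ only multiplies the integrand by the harmless factor $-2\pi is(\bm\alpha)$ and rescales the prefactor $\lambda^{3-d}$. By Plancherel this reduces everything to an $L^\infty$ bound on the multiplier difference of the shape $C_\eps\,q^{-d}w_q(\mathbf a)^d\,\Lambda^{-\beta_2+\eps}$; the exponent $\beta_2=\min\bigl(1/4,(d-6)/8\bigr)$ emerges from balancing the telescoping error ($\sim P^{1/2}/N\sim\Lambda^{-1/4+\eps}$) against the $\bm\beta$-truncation error, which gains one factor of $\Lambda^{-1/8}$ for each of the $d-6$ ``surplus'' factors of $V_N$ produced by Lemma \ref{l3}. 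Lemma \ref{l2} with $s=d$ then gives
$$\sum_{\substack{1\le\mathbf a\le q\\(q,a_1,a_2,a_3)=1}} q^{-d}w_q(\mathbf a)^d \lesssim_\eps q^{2-d/2+\eps},$$
which is $\le q^{-1}$ for $d\ge 7$, yielding the stated bound.

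The main technical obstacle is the third step when $d$ is close to $7$: the telescoping and truncation errors must be calibrated tightly, and the gain of $P^{1/2}$ coming from Lemma \ref{l4++} (rather than the much weaker $P^3$ from Lemma \ref{l4}) is essential for the argument to close in the right $q$-uniform form. A secondary subtlety is to verify that because of the support of $\Phi$ only the single nearest-integer $\mathbf m$ contributes to the sum in $\widehat{M_\lambda^{\mathbf a/q}}$ at each $\bm\xi$, so that the expansion produced on $\mathfrak M(q;\mathbf a)$ matches exactly the prescribed multiplier rather than producing additional spurious terms.
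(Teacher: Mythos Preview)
Your overall architecture---local approximation on the major arc, then extension of the $\bm\beta$-integral to $\mathbb R^3$, insertion of the cutoff $\Phi$, and summation over $\mathbf a$ via Lemma~\ref{l2}---matches the paper's three-step chain $T_\lambda^{\mathbf a/q}\to A_\lambda^{\mathbf a/q}\to B_\lambda^{\mathbf a/q}\to M_\lambda^{\mathbf a/q}$. There are, however, two genuine problems.

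First, your handling of the supremum in $\lambda$ does not work as written. Differentiating $e(-\lambda s(\bm\alpha))$ produces a factor $s(\bm\alpha)=s(\mathbf a)/q+s(\bm\beta)$, and on the major arc $s(\mathbf a)/q$ is of size $O(1)$, not $O(\Lambda^{-1})$. Consequently $\|F'(\lambda)\|_2$ is the same order as $\|F(\lambda)\|_2$, and your Sobolev inequality then loses a factor $\Lambda^{1/2}$ from the $\lambda$-integral, destroying the bound. Even if you first strip the harmless unimodular phase $e_q(-\lambda s(\mathbf a))$ (which is $\bm\xi$-independent), the remaining $\lambda$-derivative only gains $|\bm\beta|$ or $\lambda^{-1/2}$, which is not enough to recover the full $\Lambda^{-1}$ needed. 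The paper sidesteps this entirely via Lemma~\ref{l10}: the multipliers in each step are of the form $\int K(\bm\alpha;\bm\xi)e(-\lambda\Phi(\bm\alpha))\,\dalpha$, so the pointwise supremum over $\lambda$ is controlled by $\int\sup_{\bm\xi}|K(\bm\alpha;\bm\xi)|\,\dalpha$, with no loss.

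Second, your bookkeeping of the error sources is off. Lemma~\ref{l4++} gives only an upper bound for $|S_N|$, not an approximation; the local expansion with remainder is Lemma~\ref{l4}, whose error is $O(qN(1+N^2|\bm\beta|))$, not $O(NP^{1/2})$---and in any case $P^{1/2}/N\sim\Lambda^{-3/8}$, not $\Lambda^{-1/4}$. More importantly, the exponent $1/4$ in $\beta_2$ is \emph{not} produced by the telescoping step. In the paper the telescoping ($T\to A$) contributes $N^{-1}=\Lambda^{-1/2}$ when $d\ge 9$; the bottleneck $\Lambda^{-1/4}$ arises from the cutoff-insertion step ($A\to B$), where on the region $\tfrac18\le|q\xi_j-m_j|\le\tfrac12$ one must bound $V_N(\bm\beta;\theta_j,0)$ via the $N|\theta_j|$-decay in Lemma~\ref{l3}, costing a factor $N^{1/2}$ relative to the main term. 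You dismiss this as a ``secondary subtlety'', but it is precisely what fixes $\beta_2=1/4$ for $d\ge 8$. The $(d-6)/8$ comes, as you say, from the tail of the $\bm\beta$-integral.
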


In Section \ref{s5.2}, we study the operators $M_\lambda^{\mathbf{a}/q}$ further and show that, in fact,
\begin{equation}\label{eq3.9}
\sum_{q = 1}^{\infty} \sum_{\substack{1 \le \mathbf a \le q\\(q, a_1, a_2, a_3)=1}} M_\lambda^{\mathbf{a}/q} = M_\lambda, 
\end{equation}
where $M_\lambda$ is the operator defined in the statement of Theorem \ref{thm3}. We also establish the following result.

\begin{prop}\label{p3}
Let $d \ge 7$, $\frac d{d-1} < p \le 2$, and $q \in \mathbb N$. Then, for any fixed $\eps >0 $, one has 
\[ \sum_{\substack{1 \le \mathbf a \le q\\(q, a_1, a_2, a_3)=1}} \Big\| \sup_{\lambda \in \mathbb N} \big| M_\lambda^{\mathbf{a}/q} f \big| \Big\|_p \lesssim_\eps q^{-d/p'+2+\eps}\| f \|_p,\]
where $\frac 1{p'} = 1 - \frac 1p$. Consequently, the maximal operator
\[ M^*f = \sup_{\lambda \in \mathbb N} |M_\lambda f| \]
is bounded from $\ell^p(\mathbb Z^d)$ to $\ell^p(\mathbb Z^d)$ when $\frac d{d-3} < p \le 2$.
\end{prop}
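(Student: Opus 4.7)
My plan is to prove Proposition \ref{p3} by adapting the Magyar-Stein-Wainger framework for the discrete spherical maximal function to the two-dimensional Gauss sum $g(q;\mathbf a,m,n)$ associated with our binary quadratic form. The identity \eqref{eq3.9} is an unpacking exercise: regrouping the series defining $\widehat{M_\lambda}$ by $q$ and $\mathbf a$, and recalling that the inner sum over admissible $\mathbf a$ of $e_q(-\lambda s(\mathbf a))\prod_j g(q;\mathbf a,m_j,0)$ is exactly $G_\lambda(q;\mathbf m,\mathbf 0)$, one naturally defines the single-arc operator
\[ \widehat{M_\lambda^{\mathbf a/q}}(\bm\xi) = c_d\,e_q(-\lambda s(\mathbf a))\sum_{\mathbf m\in\mathbb Z^d}\Big(\prod_{j=1}^d g(q;\mathbf a,m_j,0)\Big)\Phi(q\bm\xi-\mathbf m)\widetilde{\mathrm dS}\big(\lambda^{1/2}(\bm\xi-q^{-1}\mathbf m)\big), \]
so that $\sum_{q,\mathbf a}M_\lambda^{\mathbf a/q}=M_\lambda$, the absolute convergence being a consequence of the per-arc bound below.

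The heart of the argument will be the single major arc maximal inequality
\[ \Big\|\sup_{\lambda\in\mathbb N}|M_\lambda^{\mathbf a/q}f|\Big\|_{\ell^p(\mathbb Z^d)} \lesssim q^{-d/p'-d/2}w_q(\mathbf a)^d \|f\|_{\ell^p(\mathbb Z^d)}, \qquad \frac{d}{d-1}<p\le 2. \]
To prove it, I would apply the MSW transference principle, factoring $M_\lambda^{\mathbf a/q}$ (after pulling out the unimodular phase $c_d e_q(-\lambda s(\mathbf a))$) as the composition of a convolution operator $\mathcal C_{q,\mathbf a}$ on $\mathbb Z^d$ whose $q$-periodic symbol carries the Gauss sum coefficients $\prod_j g(q;\mathbf a,m_j,0)$, together with a discretized version $\mathcal A_{\lambda^{1/2}}$ of the continuous spherical average of radius $\lambda^{1/2}$ extracted from the multiplier $\Phi(q\bm\xi)\widetilde{\mathrm dS}(\lambda^{1/2}\bm\xi)$. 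The maximal function $\sup_\lambda|\mathcal A_{\lambda^{1/2}}f|$ is bounded on $\ell^p(\mathbb Z^d)$ for $p>d/(d-1)$ by Stein's continuous spherical maximal theorem lifted via sampling. For $\mathcal C_{q,\mathbf a}$, the symbol is separable across the $d$ coordinates, so it suffices to estimate a one-dimensional $q$-periodic multiplier: at $p=2$, Plancherel combined with Lemma \ref{l1} yields the bound $q^{-1}w_q(\mathbf a)$ on each factor; at $p=1$, Cauchy-Schwarz and Parseval on $\mathbb Z_q$ yield $q^{-1/2}w_q(\mathbf a)$ on the corresponding $\ell^1$-kernel. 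Interpolation, the $d$-fold tensor structure, and the standard $q^{-d/p}\cdot q^{d/p'}$ sampling losses in MSW transference then assemble to the displayed bound.

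The remainder of the proof is summation in two steps. First, Lemma \ref{l2} with $s=d$ (which applies since $d\ge 7$) gives $\sum_{\mathbf a}w_q(\mathbf a)^d\lesssim_\eps q^{d/2+2+\eps}$, whence
\[ \sum_{\substack{1\le\mathbf a\le q\\(q,a_1,a_2,a_3)=1}} \Big\|\sup_\lambda|M_\lambda^{\mathbf a/q}f|\Big\|_p \lesssim q^{-d/p'-d/2}\cdot q^{d/2+2+\eps}\|f\|_p = q^{-d/p'+2+\eps}\|f\|_p, \]
which is the first assertion. Summing over $q\ge 1$ then produces a convergent series precisely when $d/p'>3$, i.e., $p>d/(d-3)$, yielding the boundedness of $M^*$. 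The main obstacle will be the single-arc $\ell^p$ inequality: in the one-dimensional spherical MSW setting the Gauss sum is a classical quadratic Gauss sum and the transference is textbook, whereas our two-dimensional Gauss sum coming from a binary quadratic form modulo $q$ forces the extra factor $w_q(\mathbf a)$ from Lemma \ref{l1} into the estimate. Threading this factor cleanly through the MSW factorization, and verifying that the various $q$-losses from sampling, periodization, and interpolation combine to $q^{-d/p'-d/2}$ and not something worse, will be the chief technical point.
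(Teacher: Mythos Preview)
Your approach is essentially the paper's: both invoke the Magyar--Stein--Wainger transference machinery (their Proposition~3.1(a)) with Lemma~\ref{l1} substituted for the classical Gauss-sum bound, and then sum over $\mathbf a$ via Lemma~\ref{l2}. Your per-arc exponent $q^{-d/p'-d/2}w_q(\mathbf a)^d$ differs cosmetically from the paper's $q^{-2d/p'}w_q(\mathbf a)^{2d/p'}$ (you interpolate the arithmetic factor $\mathcal C_{q,\mathbf a}$ between explicit $\ell^1$ and $\ell^2$ kernel bounds rather than against the trivial endpoint bound), but after Lemma~\ref{l2} both collapse to the same $q^{-d/p'+2+\eps}$; the phrase about ``$q^{-d/p}\cdot q^{d/p'}$ sampling losses'' is superfluous and should be dropped, and the factorization $M_\lambda^{\mathbf a/q}=\mathcal C_{q,\mathbf a}\circ\mathcal A_\lambda$ needs an auxiliary cutoff $\tilde\Phi$ with $\tilde\Phi\Phi=\Phi$ to be literally true.
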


Together, Propositions \ref{p1}--\ref{p3} suffice to establish the $\ell^2$-bound for the remainder term in the approximation formula. Indeed, combining Propositions \ref{p2} and \ref{p3}, we get
\begin{equation}\label{eq3.10}
\Big\| \sup_{\lambda \in [\Lambda/2,\Lambda)} \big| T_{\lambda}(f; \mathfrak M) - M_\lambda f \big| \Big\|_2 \lesssim_\eps \Lambda^{-\beta_2+\eps}\| f \|_2.
\end{equation}
To extend this to the full range of $p$ in the theorem, we interpolate between the case $p=2$ and a weaker bound, which we deduce from the following result on the dyadic maximal function $T_{\mathfrak M}^*$.

\begin{prop}\label{p4}
If $d \geq 7$ and $\frac{d}{d-3} < p \le 2$, one has
\[ \Big\|  \sup_{\lambda \in [\Lambda/2,\Lambda)}|T(f; {\mathfrak M})| \Big\|_{p} \lesssim \| f \|_p. \]
\end{prop}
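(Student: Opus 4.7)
The plan is to handle the major-arc maximal operator by splitting it into three pieces: the full main operator $M_\lambda f$, the tail of $M_\lambda$ corresponding to $q > P$, and a local remainder on the finite range $q \le P$. Each piece is then controlled by either Proposition \ref{p3} or Proposition \ref{p2}. Concretely, using \eqref{eq3.9}, I would write
\[ T_\lambda(f; \mathfrak{M}) = M_\lambda f + \sum_{q \le P} \sum_{\mathbf a} \big( T_\lambda^{\mathbf a/q} - M_\lambda^{\mathbf a/q} \big) f \; - \sum_{q > P} \sum_{\mathbf a} M_\lambda^{\mathbf a/q} f, \]
and use the triangle inequality to reduce the proof to $\ell^p$-bounds on the maximal function of each of these three pieces.

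The first is exactly Proposition \ref{p3}. For the tail piece, Proposition \ref{p3} applied $q$-by-$q$ yields
\[ \bigg\| \sup_\lambda \bigg| \sum_{q > P} \sum_{\mathbf a} M_\lambda^{\mathbf a/q} f \bigg| \bigg\|_p \le \sum_{q > P} q^{-d/p'+2+\epsilon} \|f\|_p \lesssim \|f\|_p, \]
because the series $\sum_{q} q^{-d/p'+2+\epsilon}$ converges once $p > d/(d-3)$ and $\epsilon$ is small. (In fact this contribution is $O(P^{-\delta}\|f\|_p)$ for some $\delta > 0$, so it gives a bonus saving.)

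For the local remainder, the case $p = 2$ is immediate from summing Proposition \ref{p2} over $q \le P$, giving a bound of order $(\log P)\Lambda^{-\beta_2+\epsilon}\|f\|_2 \lesssim \|f\|_2$. For $p < 2$, I would interpolate this $\ell^2$-bound (with its strong $\Lambda^{-\beta_2}$ decay) against a crude $\ell^{p_1}$-bound for some $p_1 \in (d/(d-3), p)$. A crude $\ell^{p_1}$-bound follows from $|T_\lambda^{\mathbf a/q} - M_\lambda^{\mathbf a/q}| \le |T_\lambda^{\mathbf a/q}| + |M_\lambda^{\mathbf a/q}|$ together with Proposition \ref{p3} for the $M$-pieces and a rough kernel estimate for the $T$-pieces (Young's inequality, using that $K_\lambda^{\mathbf a/q}$ is supported in a ball of radius $N$); even if this crude estimate has polynomial growth in $\Lambda$, the $\Lambda^{-\beta_2}$ decay in $\ell^2$ will absorb it under interpolation.

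The main obstacle I expect is exactly this $p < 2$ interpolation: quantifying the polynomial growth in $\Lambda$ of the crude $\ell^{p_1}$-bound, and then checking that the interpolated exponent of $\Lambda$ is negative while the resulting constants still sum over $q \le P$ to leave a $\Lambda$-uniform bound. Once all three pieces are controlled uniformly in $\Lambda$, summing them via the triangle inequality yields $\|\sup_{\lambda \in [\Lambda/2,\Lambda)} |T_\lambda(f; \mathfrak M)|\|_p \lesssim \|f\|_p$ throughout the stated range $d/(d-3) < p \le 2$.
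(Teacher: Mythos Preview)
Your decomposition into the three pieces is sensible, and the first two pieces are handled correctly by Proposition~\ref{p3}. The gap is in the third piece, and it is precisely the obstacle you flag but then dismiss too quickly. Suppose your crude $\ell^{p_1}$-bound on the local remainder is $\lesssim \Lambda^{A}\|f\|_{p_1}$ for some $A>0$ (and a trivial kernel estimate via Young gives roughly $A\asymp 1$ here, since $\sum_{q\le P}\sum_{\mathbf a}\|K_\lambda^{\mathbf a/q}\|_1\lesssim P^4\asymp\Lambda$). Interpolating with the $\ell^2$-bound $\Lambda^{-\beta_2}$ from Proposition~\ref{p2} gives, for $\tfrac1p=\tfrac\theta2+\tfrac{1-\theta}{p_1}$, a bound of order $\Lambda^{-\theta\beta_2+(1-\theta)A}$. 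This is $\Lambda$-uniform only when $\theta\ge A/(A+\beta_2)$, which forces $p$ into a neighbourhood of $2$ and certainly fails as $p\downarrow d/(d-3)$ (where $\theta\to 0$). No choice of $p_1$ rescues this: moving $p_1$ close to $p$ makes $\theta$ small, and moving $p_1$ down does nothing to improve the growth exponent $A$.

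What you actually need for your scheme to work is a $\Lambda$-uniform $\ell^{p_1}$-bound on the local remainder for $p_1$ near $d/(d-3)$; but that is exactly the content of \eqref{eq3.11}, which the paper derives \emph{from} Proposition~\ref{p4}. So your argument is circular at this step. The paper avoids this by proving Proposition~\ref{p4} directly, without invoking Propositions~\ref{p2} or~\ref{p3}: one applies Minkowski to pull the sup inside the integral over $\mathfrak M(q;\mathbf a)$, bounds the multiplier $\mathcal F_N(\bm\alpha;\bm\xi)$ pointwise via \eqref{eqp2.1}, uses Plancherel at $p=2$ and the trivial bound at $p=1$ to interpolate $\|F_N(\bm\alpha;f)\|_p$ at each fixed $\bm\alpha$, and then integrates and sums using Lemma~\ref{l2}. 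Because the interpolation is done \emph{before} integrating over the arcs, the $\Psi(\bm\alpha)^{-d/p'}$ decay is preserved and yields a convergent sum precisely when $d/p'>3$, i.e.\ $p>d/(d-3)$.
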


We prove Proposition \ref{p4} in Section \ref{s5.3}. Here, we will use this proposition to complete the proof of Theorem \ref{thm3}. Observe that Propositions \ref{p3} and \ref{p4} give
\begin{equation}\label{eq3.11}
\Big\| \sup_{\lambda \in [\Lambda/2,\Lambda)} \big| T_{\lambda}(f; \mathfrak M) - M_\lambda f \big| \Big\|_p \lesssim \| f \|_p
\end{equation}
for all $p > d/(d-3)$. Thus, for any $r$ in the range $\frac d{d-3} < r < 2$, we can interpolate between \eqref{eq3.10} and the case $p = \frac d{d-3} + \eta$ of \eqref{eq3.11}, with $\eta > 0$ sufficiently small. We get
\[ \Big\| \sup_{\lambda \in [\Lambda/2,\Lambda)} \big| T_{\lambda}(f; \mathfrak M) - M_\lambda f \big| \Big\|_r \lesssim_\eps \Lambda^{-\theta(\beta_2 + \eps)} \| f \|_r, \] 
where $\theta$ is defined by 
\[ \frac 1r = \frac {\theta}2 + \frac {1-\theta}p. \]
In combination with Proposition \ref{p1}, this proves \eqref{eqi.6} with $\delta_p = \min( \alpha_p, \theta\beta_2)$. 
 
\section{Minor arc analysis}
\label{s4}

We begin our minor arc analysis with a reduction step that relates the operator norm of a maximal operator like $T^*_{\mathfrak m}$ to a mean value of an exponential sum. The reduction step uses the following variant of Lemma 7 in~\cite{ACHK}.  

\begin{lemma}\label{l10}
Let $X = \mathbb T^k$ or $\mathbb R^k$, for some $k \in \mathbb N$, and let $T_\lambda$, $\lambda \in \mathcal L$, be convolution operators on $\ell^2(\mathbb Z^d)$ with Fourier multipliers given by
\[ \widehat{T_\lambda}(\bm\xi) = \int_{X} K(\bm\alpha; \bm\xi) e(-\lambda\Phi(\bm\alpha)) \, \dalpha, \]
where $\Phi: X \to \mathbb R$ is continuous and $K( \cdot; \bm\xi) \in L^1(X)$ is a kernel independent of $\lambda$. Further, define the maximal function  
\[ T^* f (\mathbf x) = \sup_{\lambda} |T_\lambda f(\mathbf x)|. \]
Then 
\begin{equation}\label{eq4.1} 
\| T^*f \|_{2} \le \|f\|_2 \int_{X} \sup_{\bm\xi \in \mathbb T^d}  | K(\bm\alpha; \bm\xi) | \, \dalpha. 
\end{equation}
\end{lemma}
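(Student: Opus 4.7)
The approach is to linearize the supremum over $\lambda$ by interchanging the order of integration in the Fourier representation of $T_\lambda f$. The $\lambda$-dependence of the multiplier sits entirely in the unimodular factor $e(-\lambda\Phi(\bm\alpha))$; once the $\dalpha$ integral is pulled outside the Fourier inversion integral, the triangle inequality annihilates this factor uniformly in $\lambda$, reducing matters to a family of $\lambda$-free convolution operators to which Plancherel applies directly.

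Concretely, for each fixed $\bm\alpha \in X$ I would define the convolution operator $S_{\bm\alpha}$ on $\ell^2(\mathbb Z^d)$ with Fourier multiplier $K(\bm\alpha;\bm\xi)$; Plancherel immediately gives $\|S_{\bm\alpha} f\|_2 \le \sup_{\bm\xi}|K(\bm\alpha;\bm\xi)|\cdot \|f\|_2$. Next, combining Fourier inversion on $\mathbb T^d$ with Fubini, I would rewrite
\[ T_\lambda f(\mathbf x) = \int_X e(-\lambda\Phi(\bm\alpha))\, S_{\bm\alpha} f(\mathbf x)\, \dalpha. \]
Passing to absolute values and taking the supremum over $\lambda$ pointwise in $\mathbf x$ yields the $\lambda$-free majorant
\[ T^* f(\mathbf x) \le \int_X |S_{\bm\alpha} f(\mathbf x)|\, \dalpha. \]
Finally, Minkowski's integral inequality in $\ell^2(\mathbb Z^d)$ followed by the Plancherel bound for each $S_{\bm\alpha}$ gives
\[ \|T^* f\|_2 \le \int_X \|S_{\bm\alpha} f\|_2\, \dalpha \le \|f\|_2 \int_X \sup_{\bm\xi \in \mathbb T^d}|K(\bm\alpha;\bm\xi)|\, \dalpha, \]
which is precisely \eqref{eq4.1}.

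The only real technical point is justifying the Fubini interchange. I would first assume, without loss of generality, that the right-hand side of \eqref{eq4.1} is finite (otherwise there is nothing to prove), and work initially with $f$ in the dense subspace of finitely supported sequences, so that $\widehat f$ is bounded and continuous on $\mathbb T^d$. Under these assumptions the joint majorant $\sup_{\bm\xi}|K(\bm\alpha;\bm\xi)|\cdot\|\widehat f\|_\infty$ is integrable on $X\times \mathbb T^d$, justifying the swap; a standard density argument then extends the bound from finitely supported $f$ to all of $\ell^2(\mathbb Z^d)$. Measurability of $T^* f$ is automatic in our intended applications, where the parameter $\lambda$ ranges over the countable set $\mathbb N$.
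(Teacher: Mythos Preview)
Your argument is correct and is exactly the standard proof of this type of result: linearize via Fubini to write $T_\lambda f = \int_X e(-\lambda\Phi(\bm\alpha)) S_{\bm\alpha} f\,\dalpha$, kill the unimodular $\lambda$-dependence by the triangle inequality, then apply Minkowski and Plancherel. The paper does not actually supply a proof of this lemma; it merely states it as a variant of Lemma~7 in \cite{ACHK} and uses it as a black box, so there is no alternative approach to compare against. Your handling of the Fubini justification and the density reduction is also appropriate.
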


In the proof of Proposition \ref{p1}, we apply \eqref{eq4.1} with $X = \mathbb T^3$ and $K = \mathcal F_N \cdot \mathbf 1_{\mathfrak m}$, where
\begin{equation}\label{eq4.2}
\mathcal F_N(\bm\alpha; \bm\xi) = \prod_{j=1}^{d} S_N(\bm\alpha; \xi_j, 0).     
\end{equation} 
The supremum over $\bm{\xi}$ on the right side of \eqref{eq4.1} then stands in the way of a direct application of results from analytic number theory. Our next lemma overcomes this obstacle; its proof is similar to the proof of Lemma 3.2 in \cite{ACHK2}.

\begin{lemma}\label{l11}
If $s \in \mathbb N$ and $\mathfrak B \subseteq \mathbb T^3$ is a measurable set, then
\[ \int_{\mathfrak B} \sup_{\xi,\eta} |S_N(\bm\alpha; \xi, \eta) |^{2s} \, \dalpha \lesssim N^2 \int_{\mathfrak B} \int_{\mathbb T^2} |S_N(\bm\alpha; \xi, \eta) |^{2s} \, {\rm d}\xi{\rm d}\eta \, \dalpha.\]
\end{lemma}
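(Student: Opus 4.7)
My plan is to exploit the fact that, for each fixed $\bm\alpha \in \mathbb T^3$, the exponential sum $S_N(\bm\alpha;\xi,\eta)$ is a trigonometric polynomial in the two real variables $(\xi,\eta)$ whose Fourier support is contained in $[-N,N]^2$. Indeed, from the very definition
\[
S_N(\bm\alpha;\xi,\eta) = \sum_{|x|,|y|\le N} e(\bm\alpha\cdot\bm\phi(x,y))\, e(\xi x + \eta y),
\]
the non-negative function $Q_{\bm\alpha}(\xi,\eta) := |S_N(\bm\alpha;\xi,\eta)|^{2s} = S_N(\bm\alpha;\xi,\eta)^{s}\,\overline{S_N(\bm\alpha;\xi,\eta)}^{\,s}$ is itself a trigonometric polynomial in $(\xi,\eta)$ whose frequencies all lie in $[-2sN,2sN]^2$.

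The second step is a Nikolskii-type $L^\infty$--$L^1$ comparison on $\mathbb T^2$: for any trigonometric polynomial $Q$ on $\mathbb T^2$ whose spectrum lies in $[-M,M]^2$, the reproducing identity $Q = Q * (D_M \otimes D_M)$, where $D_M(\theta) = \sum_{|n|\le M} e(n\theta)$ is the one-dimensional Dirichlet kernel with $\|D_M\|_\infty = 2M+1$, combined with Young's convolution inequality, gives
\[
\|Q\|_{L^\infty(\mathbb T^2)} \le (2M+1)^{2}\,\|Q\|_{L^1(\mathbb T^2)}.
\]
Taking $Q = Q_{\bm\alpha}$ and $M = 2sN$, this produces the pointwise-in-$\bm\alpha$ inequality
\[
\sup_{\xi,\eta \in \mathbb T}|S_N(\bm\alpha;\xi,\eta)|^{2s} \lesssim_{s} N^{2}\int_{\mathbb T^2}|S_N(\bm\alpha;\xi,\eta)|^{2s}\,{\rm d}\xi{\rm d}\eta,
\]
and integrating over $\bm\alpha \in \mathfrak B$ yields the claimed bound. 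The continuity of $|S_N|^{2s}$ in all five variables justifies the use of Fubini's theorem at this last step.

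There is no substantive obstacle: the whole argument is structural, and the set $\mathfrak B$ plays a purely passive role, so the integration over $\mathfrak B$ factors through trivially. The only point that requires a moment's care is verifying that the Fourier support of $|S_N|^{2s}$ is contained in a box of side $4sN$ in each of the variables $\xi,\eta$, so that the Nikolskii inequality delivers the factor $N^{2}$ rather than $N^{4s}$; this works because each of the $2s$ factors of $S_N$ or $\overline{S_N}$ contributes frequencies of magnitude at most $N$, while the implied constant, which depends on $s$, is harmless since $s$ is fixed throughout.
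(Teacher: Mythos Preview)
Your argument is correct and is exactly the standard route: for each fixed $\bm\alpha$, the function $|S_N(\bm\alpha;\xi,\eta)|^{2s}$ is a non-negative trigonometric polynomial in $(\xi,\eta)$ with spectrum in $[-2sN,2sN]^2$, and the Nikolskii/Dirichlet-kernel inequality $\|Q\|_\infty \le (2M+1)^2\|Q\|_1$ gives the pointwise-in-$\bm\alpha$ bound with the right factor $N^2$; integrating over $\mathfrak B$ finishes it. The paper does not spell out a proof but refers to Lemma~3.2 of \cite{ACHK2}, whose argument is precisely this one (phrased there via the equivalent observation that a non-negative trigonometric polynomial has all Fourier coefficients dominated by the zeroth, and there are $O_s(N^2)$ of them), so your approach coincides with the intended one.
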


\subsection*{Proof of Proposition \ref{p1}}

First, we consider the case $p=2$. We may assume that $\| f \|_2 = 1$. Lemma~\ref{l10} and the arithmetic-geometric mean inequality then give  
\begin{align}\label{eqp1.1}
\left\| T^*_{\mathfrak m}f \right\|_{2} &\lesssim \Lambda^{3-d} \int_{{\mathfrak m}} \sup_{\bm\xi \in \mathbb T^d} \left| \mathcal F_N(\bm\alpha; \bm\xi) \right| \, \dalpha \notag\\
&\lesssim \Lambda^{3 - d} \int_{{\mathfrak m}} \sup_{\xi \in \mathbb T} \left| S_N(\bm\alpha; \xi, 0) \right|^d \, \dalpha.
\end{align}

Observe that our choice of major and minor arcs is driven by Lemma \ref{l4++}: by setting $P$ to the maximum value permitted in that lemma, we have
\begin{equation}\label{eqp1.2} 
\sup_{\bm\alpha \in \mathfrak m} |S_N(\bm\alpha; \xi, 0)| \lesssim_\eps N^{7/4+\eps},  
\end{equation} 
for any fixed $\eps > 0$ and all $\xi \in \mathbb T$. We apply \eqref{eqp1.2} to all but eight copies of $S_N(\bm\alpha; \xi, 0)$ on the right side of \eqref{eqp1.1} and obtain
\[ \left\| T^*_{\mathfrak m}f \right\|_{2} \lesssim_\eps \Lambda^{-5}N^{(8-d)/4+\eps} \int_{\mathbb T^3} \sup_{\xi \in \mathbb T} \left| S_N(\bm\alpha; \xi, 0) \right|^8 \, \dalpha . \]
Lemma \ref{l11} now yields
\begin{align}\label{eqp1.3}
\left\| T^*_{\mathfrak m}f \right\|_{2} &\lesssim_\eps \Lambda^{-4}N^{(8-d)/4+\eps} \int_{{\mathbb T^3}} \int_{{\mathbb T^2}} \left| S_N(\bm\alpha; \xi, \eta) \right|^8 \,  {\rm d}\xi{\rm d}\eta \, \dalpha \notag\\
&\lesssim_\eps \Lambda^{-4}N^{(8-d)/4+\eps} J_{4,2,2}(N) \lesssim_\eps N^{(8-d)/4+2\eps},
\end{align}
by Lemma \ref{l7}. 

Next, we bound $T_{\mathfrak m}^*$ on $\ell^1(\mathbb Z^d)$. From \eqref{eq3.2}, we get
\[ \left\| T^*_{\mathfrak m} f \right\|_{1} \lesssim  \Lambda^{3-d} \| f \|_1 \int_{\mathfrak m} \bigg\{ \sum_{|x| \le N} \bigg| \sum_{|y| \le N} e(\alpha_3y^2 + 2\alpha_2 xy) \bigg| \bigg\}^d \, \dalpha. \]
If either $\alpha_2$ or $\alpha_3$ lies in the one-dimensional set of minor arcs $\mathfrak m(P)$, the proof of Lemma~\ref{l5} with the roles of $x$ and $y$ switched yields
\[  \sum_{|x| \le N} \bigg| \sum_{|y| \le N} e(\alpha_3y^2 + 2\alpha_2 xy) \bigg| \lesssim_\eps N^{2+\eps}P^{-1/2}. \]
Hence, by H\"older's inequality,
\begin{equation}\label{eqN.1}
\left\| T^*_{\mathfrak m} f \right\|_{1} \lesssim  \Lambda^{3} \| f \|_1 \bigg\{ P^{-d/2+\eps} + N^{-d-1} \int_{\mathfrak K} \sum_{|x| \le N} \bigg| \sum_{|y| \le N} e(\alpha_3y^2 + 2\alpha_2 xy) \bigg|^d \, \mathrm d\alpha_2\mathrm d\alpha_3 \bigg\},
\end{equation}
where $\mathfrak K$ are the two-dimensional major arcs
\[ \mathfrak K = \mathfrak M(P) \times \mathfrak M(P). \]

When $\alpha_3 = a_3/q_3 + \beta_3 \in \mathfrak M(q_3,a_3)$, with $(a_3,q_3) = 1$ and $1 \le q_3 \le P$, Theorem 8 of Vaughan \cite{Vaug09} gives
\[ \sum_{|y| \le N} e(\alpha_3y^2 + 2\alpha_2 xy) = \frac 1{q_3}\bigg( \sum_{r = 1}^{q_3} e_{q_3}\big( a_3r^2 + m_xr \big) \bigg) \int_{-N}^N e\big( \beta_3y^2 + \theta_x y \big) \, \mathrm dy + O(P^{1/2}), \]
where $m_x$ is the unique integer with 
\[ -\frac 12 \le 2q_3x\alpha_2 - m_x < \frac 12 \]
and $\theta_x = 2\alpha_2 x - m_x/q_3$. If $|\theta_x| \ge 3P/(q_3N)$, we deduce that
\[ \sum_{|y| \le N} e(\alpha_3y^2 + 2\alpha_2 xy) \lesssim_\eps NP^{-1/2}, \]
and so \eqref{eqN.1} yields
\begin{equation}\label{eqN.2}
\left\| T^*_{\mathfrak m} f \right\|_{1} \lesssim  \Lambda^{3} \| f \|_1 \bigg\{ P^{-d/2+\eps} + N^{-d-1} \int_{\mathfrak K} \; \sideset{}{^{(\bm\alpha)}}\sum_{|x| \le N} \bigg| \sum_{|y| \le N} e(\alpha_3y^2 + 2\alpha_2 xy) \bigg|^d \, \mathrm d\alpha_2\mathrm d\alpha_3 \bigg\},
\end{equation}
where the notation $\sum^{(\bm\alpha)}$ indicates that we are summing only over $x$ with $|\theta_x| < 3P/(q_3N)$. When $|x| \le N$, under the latter condition, we have
\begin{align*}
|q_2m_x - 2q_3xa_2| &\le q_2|m_x - 2q_3x\alpha_2| + 2q_3|x|\cdot|q_2\alpha_2-a_2| \\
&\le 3q_2PN^{-1} + 2q_3|x|PN^{-2} \le 5P^2N^{-1} < 1.
\end{align*} 
Therefore, 
\[ \frac {m_x}{q_3} = \frac {2xa_2}{q_2}. \]
We conclude that for those $\bm\alpha$ and $x$ that appear on the right side of \eqref{eqN.2}, Vaughan's approximation can be rewritten as
\[ \sum_{|y| \le N} e(\alpha_3y^2 + 2\alpha_2 xy) = \frac 1{q}\bigg( \sum_{r = 1}^{q} e_{q}\big( b_3r^2 + 2b_2xr \big) \bigg) \int_{-N}^N e\big( \beta_3y^2 + 2\beta_2xy \big) \, \mathrm dy + O(P^{1/2}), \]
where 
\[ q = [q_2,q_3], \quad b_i = \frac{a_iq}{q_i}, \quad \beta_2 = \alpha_2 - \frac {a_2}{q_2} = \alpha_2 - \frac {b_2}q. \]
Since $(q,b_2,b_3) = 1$, Theorems 7.1 and 7.3 in Vaughan \cite{Vaug97} now give 
\begin{align} 
\sum_{|y| \le N} e(\alpha_3y^2 + 2\alpha_2 xy) &\lesssim_\eps \bigg( \frac {(q/q_3,x)}{q} \bigg)^{1/2-\eps} \frac {N}{(1 + N|x\beta_2| + N^2|\beta_3|)^{1/2}} + P^{1/2} \notag\\
&\lesssim_\eps  \frac {q_3^{-1/2+\eps}N}{(1 + N^2|\beta_3|)^{1/2}} + P^{1/2}. \label{eqN.3}
\end{align}
Thus, the contribution of an individual major arc $\mathfrak K(\mathbf{q;a}) = \mathfrak M(q_2,a_2) \times \mathfrak M(q_3,a_3)$ to the right side of \eqref{eqN.2} is bounded above by
\begin{align*}
&\frac {P}{q_2q_3^{d/2}} \int_{\mathbb R} \frac {N^{d-1+\eps} \, \mathrm d\beta_3}{(1 +  N^2|\beta_3|)^{d/2}} + NP^{d/2}|\mathfrak K(\mathbf{q;a})| \lesssim_\eps \frac {PN^{d-3+\eps}}{q_2q_3^{d/2}}.    
\end{align*} 
Summing this bound over the different choices for $\mathbf{q,a}$, we deduce from \eqref{eqN.2} that
\begin{equation}\label{eqp1.4}
\left\| T^*_{\mathfrak m} f \right\|_{1} \lesssim_\eps  \Lambda^{3 - \kappa + \eps}\| f \|_1, \quad \kappa = \frac {\min(d,12)}{8}.
\end{equation}

Interpolating between \eqref{eqp1.3} and \eqref{eqp1.4}, we get \eqref{eq3.6} with 
\begin{align*}
\alpha_p &= \left( \frac{d-8}{8} \right)\left( 2 - \frac{2}{p} \right) + (3-\kappa)\left( 1 - \frac{2}{p} \right) \\
&= \frac {d + 4 - 4\kappa}4 - \frac {d + 16 - 8\kappa}{4p} > 0,   
\end{align*} 
provided that $p_0(d) < p \le 2$. \qed

\begin{remark}
Note that in the above argument we interpolate between a non-trivial $\ell^2$-bound and a \emph{non-trivial $\ell^1$-bound}. This appears to be a novel feature in our work that leads to a considerable strengthening of our main results. Indeed, the reader can easily check that if we use the trivial version of \eqref{eqp1.4} with $\kappa = 0$, we get Theorems \ref{thm1} and \ref{thm3} only for 
\[ p > \frac {d+16}{d+4}. \]
While the idea we use to get a non-trivial bound on $\ell^1$ is clearly dependent on the bilinearity of our operator, it is also quite general and should be applicable to other multilinear operators.  This idea of using the multilinearity to improve a certain linear estimate echoes a common theme in harmonic analysis, yet perhaps is new in the discrete setting.
\end{remark}

\section{The major arcs}
\label{s5}

\subsection{Proof of Proposition \ref{p2}}
\label{s5.1}

We fix a major arc $\mathfrak M(q; \mathbf{a})$ and a function $f \in \ell^2(\mathbb Z^d)$, with $\| f \|_2 = 1$. Also, we assume at first that $d \ge 9$. Recall that the Fourier multiplier of $T_\lambda^{\mathbf{a}/q}$ can be expressed as 
\[ \widehat{T_\lambda^{\mathbf{a}/q}}(\bm\xi) =
\lambda^{3-d} e_q(-\lambda s(\mathbf a)) \int_{\mathfrak M_q} \mathcal F_N(q^{-1}\mathbf a + \bm\beta; \bm\xi) e(-\lambda s(\bm\beta)) \, \dbeta, \]
where 
\[ \mathfrak M_q = \mathfrak M(q; \mathbf{a}) -  q^{-1}\mathbf a. \] 

Lemma \ref{l4} suggests that a convolution with the following multiplier should define a good approximation to $T_\lambda^{\mathbf{a}/q}$:
\[ \widehat{A_\lambda^{\mathbf{a}/q}}(\bm\xi) = \lambda^{3-d} e_q(-\lambda s(\mathbf a)) \int_{\mathfrak M_q} \mathcal{G}_N(\bm\beta, q, \mathbf{a}; \bm\xi) e(-\lambda s(\bm\beta)) \, \dbeta, \]
where 
\[ \mathcal{G}_N(\bm\beta, q, \mathbf{a}; \bm\xi) = g(q; \mathbf {a, m}) V_N( \bm\beta; \bm\xi_{q,\mathbf m}), \]
with $m_j = \lfloor q\xi_j + \frac 12 \rfloor$, $\bm\xi_{q,\mathbf m} = \bm\xi - q^{-1}\mathbf m$, and
\[ g(q; \mathbf{a,m}) = \prod_{j=1}^d g(q; \mathbf a, m_j, 0), \quad
V_N( \bm\beta; \bm\xi) = \prod_{j=1}^d V_N( \bm\beta; \xi_j, 0). \]
Let $A_\lambda^{\mathbf{a}/q}$ denote the convolution operator with this Fourier multiplier. 

Similarly to \eqref{eql6.1} and \eqref{eql6.2} in the proof of Lemma \ref{l6} (but using the full strength of Lemma \ref{l1} this time), we find that
\begin{equation}\label{eqp2.1}
S_N(q^{-1}\mathbf a + \bm\beta; \xi, \eta) \lesssim_\eps \tilde w_q N^{2+\eps} \Psi(\bm\beta)^{-1/2} + N\Psi(\bm\beta),
\end{equation} 
where 
\begin{equation}\label{eqp2.2} 
\Psi(\bm\beta) = q(1 + N^2|\bm\beta|), \quad \tilde w_q = q^{-1/2}w_q(\mathbf a), 
\end{equation} 
$w_q(\mathbf a)$ being the function that appears in Lemmas \ref{l1} and \ref{l2}.

When $\bm\beta \in \mathfrak M_q$, we have $\Psi(\bm\beta) \le 4P$ and $\tilde w_q \ge P^{-1/2}$, so the first term on the right side of \eqref{eqp2.1} dominates the second. Thus, Lemma \ref{l4} and \eqref{eqp2.1} give
\[ \big| \mathcal F_N(q^{-1}\mathbf a + \bm\beta; \bm\xi) - \mathcal{G}_N(\bm\beta, q, \mathbf{a}; \bm\xi) \big| \lesssim_\eps \tilde w_q^{d-1}N^{2d-1+\eps}\Psi(\bm\beta)^{(3-d)/2}, \] 
uniformly in $\bm\xi$. Under the assumption $d \ge 9$, we deduce that
\begin{align*} 
& \int_{\mathfrak M_q} \sup_{\bm\xi} |\mathcal F_N(q^{-1}\mathbf a + \bm\beta; \bm\xi) - \mathcal{G}_N(\bm\beta, q, \mathbf{a}; \bm\xi)| \, \dbeta \\
&\lesssim_\eps \int_{\mathfrak M_q} \frac {\tilde w_q^{d-1} q^{(3-d)/2}N^{2d-1+\eps} }{(1 + N^2|\bm\beta|)^{(d-3)/2}} \,  \dbeta \lesssim_\eps \tilde w_q^{d-1}q^{(3-d)/2}N^{2d-7+\eps}.
\end{align*}
Using this bound and Lemma \ref{l10}, we obtain 
\begin{equation}\label{eqp2.3}
\Big\| \sup_{\lambda \in [\Lambda/2,\Lambda)} \big| \big( T_{\lambda}^{\mathbf{a}/q} - A_{\lambda}^{\mathbf{a}/q} \big) f \big| \Big\|_{2}
\lesssim_\eps \tilde w_q^{d-1}q^{(3-d)/2}N^{-1+\eps}.
\end{equation}

Note that we have also
\[ \mathcal{G}_N(\bm\beta, q, \mathbf{a}; \bm\xi) = \sum_{\mathbf m \in \mathbb Z^d} \mathbf 1_Q (q\bm\xi - \mathbf m) g(q; \mathbf {a, m}) V_N( \bm\beta; \bm\xi_{q,\mathbf m}), \]
where $\mathbf 1_Q$ is the indicator function of the unit cube $[-\frac 12, \frac 12)^d$. It is clear from this representation of $\mathcal G_N$ that its behavior changes abruptly as $\bm\xi$ moves around and $\mathbf m$ jumps from one lattice point to a neighboring one. To mitigate this effect, we now approximate $A_\lambda^{\mathbf{a}/q}$ by the convolution operator $B_\lambda^{\mathbf{a}/q}$ with Fourier multiplier
\[ \widehat{B_{\lambda}^{\mathbf{a}/q}}(\bm\xi) = \lambda^{3-d} e_q(-\lambda s(\mathbf a)) \int_{\mathfrak M_q} \mathcal{H}_N(\bm\beta, q, \mathbf{a}; \bm\xi) e(-\lambda s(\bm\beta)) \, \dbeta, \]
where
\[ \mathcal{H}_N(\bm\beta, q, \mathbf{a}; \bm\xi) = \sum_{\mathbf m \in \mathbb Z^d} \Phi (q\bm\xi - \mathbf m) g(q; \mathbf {a, m}) V_N( \bm\beta; \bm\xi_{q,\mathbf m}), \]
$\Phi$ being the smooth cutoff function that appears in the statements of Theorems \ref{thm2} and \ref{thm3}. The difference $\mathcal G_N - \mathcal H_N$ is supported on a set where $\frac 18 \le |q\xi_j - m_j| \le \frac 12$ for some $j$. For such $j$, Lemma \ref{l3} yields
\[ V_N(\bm\beta; \xi_j - m_j/q, 0) \lesssim_\eps q^{1/2}N^{3/2+\eps}. \]
We deduce that
\[  \sup_{\bm\xi} |\mathcal G_N( \bm\beta, q, \mathbf{a}; \bm\xi) - \mathcal H_N( \bm\beta, q, \mathbf{a}; \bm\xi)| \lesssim_\eps \frac {\tilde w_q^{d}q^{(1-d)/2}N^{2d-1/2+\eps}}{(1 + N^2|\bm\beta|)^{(d-1)/2}}, \]
and hence, 
\begin{align}\label{eqp2.4} 
& \int_{\mathfrak M_q} \sup_{\bm\xi} |\mathcal G_N( \bm\beta, q, \mathbf{a}; \bm\xi) - \mathcal H_N( \bm\beta, q, \mathbf{a}; \bm\xi| \, \dbeta \notag\\
&\lesssim_\eps \int_{\mathfrak M_q} \frac {\tilde w_q^{d} q^{(1-d)/2} N^{2d-1/2+\eps} }{(1 + N^2|\bm\beta|)^{(d-1)/2}} \, \dbeta \lesssim_\eps \tilde w_q^{d}q^{(1-d)/2}N^{2d-13/2+\eps}.
\end{align}
Lemma \ref{l10} and \eqref{eqp2.4} give
\begin{align}\label{eqp2.5}
\Big\| \sup_{\lambda \in [\Lambda/2,\Lambda)} \big| \big( A_{\lambda}^{\mathbf{a}/q} - B_{\lambda}^{\mathbf{a}/q} \big) f \big| \Big\|_{2}
\lesssim_\eps \tilde w_q^{d}q^{(1-d)/2}N^{-1/2+\eps}.
\end{align}

Next, we approximate $B_{\lambda}^{\mathbf{a}/q}$ by the convolution operator $M_{\lambda}^{\mathbf{a}/q}$ with multiplier  
\[ \widehat{M_{\lambda}^{\mathbf{a}/q}}(\bm\xi) = \lambda^{3-d}e_q(-\lambda s(\mathbf a))\sum_{\mathbf {m} \in \mathbb Z^d} \Phi( q\bm\xi - \mathbf m) g(q; \mathbf {a, m})J_{\lambda}(\bm\xi_{q,\mathbf m}; \mathbb R^3), \]
where
\[ J_{\lambda}(\bm\xi; {\mathfrak B}) = \int_{\mathfrak B} V_N(\bm\beta; \bm\xi) e(-\lambda s(\bm\beta)) \, \dbeta. \]
We can express $\widehat{B_{\lambda}^{\mathbf{a}/q}}(\bm\xi)$ in a matching form, with $J_{\lambda}(\bm\xi; \mathfrak M_q)$ in place of $J_{\lambda}(\bm\xi; \mathbb R^3)$. Thus, when $d \ge 7$, we deduce from Lemmas \ref{l1}, \ref{l3} and \ref{l10} that
\begin{align}\label{eqp2.6}
\Big\| \sup_{\lambda \in [\Lambda/2,\Lambda)} \big| \big( B_{\lambda}^{\mathbf{a}/q} - M_{\lambda}^{\mathbf{a}/q} \big) f \big| \Big\|_{2} 
&\lesssim_\eps \int_{\mathfrak M_q^c} \frac {\tilde w_q^{d}q^{-d/2}N^6 }{(1 + N^2|\bm\beta|)^{d/2-\eps}} \, \dbeta \notag\\
&\lesssim_\eps \tilde w_q^{d}q^{-3}P^{3-d/2+\eps}.
\end{align}
Here, $\mathfrak M_q^c$ denotes the complement of the box $\mathfrak M_q$ in $\mathbb R^3$. 

Using \eqref{eqp2.3}, \eqref{eqp2.5}, \eqref{eqp2.6}, and Lemma \ref{l2}, we conclude that
\begin{align*} 
&\sum_{\substack{1 \le \mathbf a \le q\\(q, a_1, a_2, a_3)=1}} \Big\| \sup_{\lambda \in [\Lambda/2,\Lambda)} \big| \big( T_{\lambda}^{\mathbf{a}/q} - M_{\lambda}^{\mathbf{a}/q} \big) f \big| \Big\|_{2} \\
&\lesssim_\eps \sum_{\substack{1 \le \mathbf a \le q\\(q, a_1, a_2, a_3)=1}} \big( q^{2-d}N^{-1+\eps} + q^{1-d}N^{-1/2+\eps} + q^{-(5+d)/2}P^{3-d/2+\eps} \big)w_q(\mathbf a)^{d-1} \\
&\lesssim_\eps q^{(5-d)/2}N^{-1/2+2\eps} + q^{-1}P^{3-d/2+2\eps}. 
\end{align*}
This completes the proof of the proposition when $d \ge 9$. 

\medskip

Suppose now that $d = 7$ or $8$. A quick examination of the above argument reveals that most of it carries without change. Indeed, the only place where a significant adjustment is needed is inequality \eqref{eqp2.3}, which changes to 
\begin{equation}\label{eqp2.7}
\Big\| \sup_{\lambda \in [\Lambda/2,\Lambda)} \big| \big( T_{\lambda}^{\mathbf{a}/q} - A_{\lambda}^{\mathbf{a}/q} \big) f \big| \Big\|_{2} \lesssim_\eps \tilde w_q^{d-1}q^{-3}P^{(9-d)/2}N^{-1+\eps}.
\end{equation}
Since the resulting contribution to the approximation error is still dominated by the contribution coming from inequality \eqref{eqp2.5}, this change does not affect the final result. \qed

\subsection{The main term}
\label{s5.2}

Recall Lemma \ref{l9}. Since $J_\lambda(\bm\xi; \mathbb R^3)$ in the definition of the operator $M_{\lambda}^{\mathbf{a}/q}$ is really the integral $I_N(\lambda; \bm\xi)$ in that lemma, we see that when $\lambda < \Lambda$, we can rewrite $\widehat{M_{\lambda}^{\mathbf{a}/q}}$ in a scale-independent form. Namely,
\[ \widehat{M_{\lambda}^{\mathbf{a}/q}}(\bm\xi) = c_d e_q(-\lambda s(\mathbf a))\sum_{\mathbf {m} \in \mathbb Z^d} \Phi( q\bm\xi - \mathbf m) g(q; \mathbf {a, m})\widetilde{\mathrm dS}\big( \lambda^{1/2}(\bm\xi - q^{-1}\mathbf{m}) \big), \]
where $c_d > 0$ and $\widetilde{\mathrm dS}(\bm\xi)$ are as in Lemma \ref{l9}. This representation allows us to give a quick proof of Proposition \ref{p3} and also verifies \eqref{eq3.9}. 

\begin{proof}[Proof of Proposition \ref{p3}]
The above form of the multiplier $\widehat{M_{\lambda}^{\mathbf{a}/q}}(\bm\xi)$ matches closely the form of the analogous multiplier in the work of Magyar, Stein and Wainger \cite{MSW}. In particular, the work in Section 3 of \cite{MSW} goes through for $M_{\lambda}^{\mathbf{a}/q}$ with minimal modifications. Using Lemma~\ref{l1} in place of the bound for the classical Gauss sum in the proof of \cite[Proposition 3.1(a)]{MSW}, we find that, for $p > d/(d-1)$, 
\[ \Big\| \sup_{\lambda \in \mathbb N} \big| M_{\lambda}^{\mathbf{a}/q} f \big| \Big\|_{p} \lesssim q^{-2d/p'}w_q(\mathbf a)^{2d/p'}\| f\|_p. \]
An appeal to Lemma \ref{l2} then completes the proof.
\end{proof}

\subsection{Proof of Proposition \ref{p4}}
\label{s5.3}

We revisit the dyadic maximal operator $T^*_{\mathfrak M(q; \mathbf{a})}$. By \eqref{eq3.2} and Minkowski's inequality, we have
\begin{equation}\label{eqp4.1} 
\| T^*_{\mathfrak M(q; \mathbf{a})}f \|_{p} \lesssim \Lambda^{3-d} \int_{\mathfrak M(q;\mathbf{a})} \left\| F_N(\bm\alpha; f) \right\|_{p} \, \dalpha.
\end{equation} 
Recall \eqref{eqp2.1} and the observation we made earlier that, when $q^{-1}\mathbf a + \bm\beta \in \mathfrak M(q;\mathbf{a})$, the second term on the right side of that inequality is superfluous. From \eqref{eq4.2} and \eqref{eqp2.1}, we get
\begin{equation}\label{eqp4.2}
\mathcal F_N(\bm\alpha; \bm\xi) \lesssim_\eps \tilde w_q^d N^{2d} \Psi(\bm\alpha)^{-d/2+\eps},
\end{equation} 
where $\tilde w_q$ is given by \eqref{eqp2.2} and $\Psi(\bm\alpha)$ is defined on $\mathfrak M(q; \mathbf{a})$ as 
\[ \Psi(\bm\alpha) = q + N^2|q\bm\alpha - \mathbf a|. \]

In $\ell^2(\mathbb Z^d)$, the Parseval--Plancherel identity and \eqref{eqp4.2} give
\begin{align*}
\left\| F_N(\bm\alpha; f) \right\|_{2}^2 &=
\int_{\mathbb T^d} |\mathcal F_N(\bm\alpha; \bm\xi) \hat f(\bm\xi) |^2 \, \mathrm d\bm\xi \notag\\
&\lesssim_\eps \tilde w_q^{2d} N^{4d}\Psi(\bm\alpha)^{-d+\eps} \int_{\mathbb T^d} |\hat f(\bm\xi) |^2 \, \mathrm d\bm\xi = \tilde w_q^{2d} N^{4d} \Psi(\bm\alpha)^{-d+\eps}\|f\|_{2}^2.
\end{align*}
We combine this inequality with the trivial $\ell^1$-bound
\[ \left\| F_N(\bm\alpha; f) \right\|_{1} \lesssim N^{2d}\|f\|_{1}. \]
When $1 < p < 2$, interpolation between these two inequalities yields
\begin{align}\label{eqp4.3}
\left\| F_N(\bm\alpha; f) \right\|_{p} \lesssim_\eps \tilde w_q^{2d/p'}N^{2d}\Psi(\bm\alpha)^{-d/p'+\eps}\|f\|_{p}, 
\end{align}
where $1/p' = 1 - 1/p$. 

Fix a function $f \in \ell^p(\mathbb Z^d)$ with $\| f\|_p = 1$. Applying \eqref{eqp4.3} to the right side of \eqref{eqp4.1}, we conclude that
\begin{align}\label{eqp4.4}
\| T^*_{\mathfrak M(q;\mathbf{a})}f \|_{p} &\lesssim_\eps \tilde w_q^{2d/p'}N^6 \int_{\mathfrak M(q; \mathbf{a})} \Psi(\bm\alpha)^{-d/p'+\eps} \, \dalpha \notag\\ 
&\lesssim_\eps q^{-d/p'+\eps}\tilde w_q^{2d/p'} \int_{|\bm\beta| \le P/q} (1 + |\bm\beta|)^{-d/p'+\eps} \, \dbeta \notag\\
&\lesssim_\eps q^{-2d/p'+\eps}w_q(\mathbf a)^{2d/p'},
\end{align} 
provided that $d/p' > 3$ and $\eps > 0$ is chosen sufficiently small. Finally, we sum \eqref{eqp4.4} over all major arcs to bound $\| T^*_{\mathfrak M}f \|_{p}$. When $d/p' > 3$, we obtain
\begin{align*}
\| T^*_{\mathfrak M}f \|_{p} &\le \sum_{q, \mathbf{a}} \| T^*_{\mathfrak M(q; \mathbf{a})}f \|_{p} \lesssim \sum_{\substack{1 \le \mathbf a \le q \le P \\(q, a_1, a_2, a_3)=1}} q^{-2d/p'+\eps} w_q(\mathbf a)^{2d/p'} \\  
&\lesssim_\eps \sum_{q \le P} q^{-d/p'+2+\eps} \lesssim_\eps 1,
\end{align*} 
after using Lemma \ref{l2} once again. Since the condition $d/p' > 3$ is equivalent to the hypothesis $p > \frac d{d-3}$, the proposition follows. \qed

\section{Counting lattice points: Proof of Theorem \ref{thm2}}
\label{s6}

Similarly to \eqref{eq3.1}, we have
\[ \widehat{T_\lambda}(\bm\xi, \bm\eta) = R_\lambda(\bm\xi, \bm\eta; \mathbb T^3), \]
where
\[ R_\lambda(\bm\xi, \bm\eta; \mathfrak B) = \lambda^{3-d} \int_{\mathfrak B} \mathcal F_N(\bm\alpha; \bm\xi, \bm\eta) e(-\lambda s(\bm\alpha)) \, \dalpha, \]
with $N = \lambda^{1/2}$ and
\[ \mathcal F_N(\bm\alpha; \bm\xi, \bm\eta) = \prod_{j=1}^{d} S_N(\bm\alpha; \xi_j, \eta_j).\]
We apply the circle method to $R_\lambda(\bm\xi, \bm\eta; \mathbb T^3)$, using a Hardy--Littlewood decomposition given by \eqref{eq2.8m} with $P = N^{2/7}$. Note that with this choice, Lemma \ref{l6} yields
\begin{equation}\label{eq6.0}
\sup_{\bm\alpha \in \mathfrak m} |S_N(\bm\alpha; \xi, \eta)| \lesssim_\eps N^{13/7 + \eps}. 
\end{equation}

It is straightforward to adapt the proof of Proposition \ref{p2} in Section \ref{s5.1} to show that
\[ R_\lambda(\bm\xi, \bm\eta; \mathfrak M) = \lambda^{3-d} \sum_{q \le P} \sum_{\mathbf{m,n} \in \mathbb Z^d} G_\lambda(q; \mathbf{m,n})\Phi_q(\bm\xi_{q,\mathbf m})\Phi_q(\bm\eta_{q,\mathbf n}) J_\lambda(\bm\xi_{q,\mathbf m}, \bm\eta_{q,\mathbf n}) + O_\eps\big(P^{-1/2 + \eps}\big), \]
where
\[ J_{\lambda}(\bm\xi, \bm\eta) = \int_{\mathbb R^3} V_N(\bm\beta; \bm\xi, \bm\eta) e(-\lambda s(\bm\beta)) \, \dbeta. \]
We have
\[ J_{\lambda}(\bm\xi, \bm\eta) = \lambda^{d-3} I_{\lambda}(\bm\xi, \bm\eta), \]
where $I_\lambda(\bm\xi, \bm\eta)$ is the integral appearing in the statement of Theorem \ref{thm2}. Since Lemmas \ref{l1}--\ref{l3} give
\begin{equation}\label{eq6.1} 
G_\lambda(q; \mathbf{m,n}) \lesssim_\eps q^{-d/2 + 2 + \eps}, \quad I_{\lambda}(\bm\xi, \bm\eta) \lesssim 1, 
\end{equation}
we conclude that
\[ R_\lambda(\bm\xi, \bm\eta; \mathfrak M) = \sum_{q = 1}^{\infty} \sum_{\mathbf{m,n} \in \mathbb Z^d} G_\lambda(q; \mathbf{m,n})\Phi_q(\bm\xi_{q,\mathbf m})\Phi_q(\bm\eta_{q,\mathbf n}) I_\lambda(\bm\xi_{q,\mathbf m}, \bm\eta_{q,\mathbf n}) + O_\eps\big(P^{-1/2 + \eps}\big). \]

On the other hand, by \eqref{eq4.2}, \eqref{eq6.0} and a variant of \eqref{eqp1.1}, 
\[ R_\lambda(\bm\xi, \bm\eta; \mathfrak m) \lesssim_\eps N^{-6 + (6-d)/7+\eps} \int_{\mathbb T^3} |S_N(\bm\alpha; \xi_j, \eta_j)|^6 \, \dalpha \]
for some $j \le d$. Lemma \ref{l8} then gives
\[ R_\lambda(\bm\xi, \bm\eta; \mathfrak m) \lesssim_\eps N^{(6-d)/7+\eps}, \]
and this completes the proof of \eqref{eqi.3}.

When $\bm\xi = \bm\eta = \bm0$, the sum over $\mathbf{m,n}$ on the right side of \eqref{eqi.3} always picks its contribution from the term $\mathbf m = \mathbf n = \bm 0$. Thus, 
\begin{equation}\label{eq6.2} 
\widehat{T_\lambda}(\bm 0, \bm 0) = \mathfrak S(\lambda) I_\lambda(\bm 0, \bm 0) + O_\eps \big(\lambda^{-1/14+\eps} \big), 
\end{equation}
where 
\[ \mathfrak S(\lambda) = \sum_{q=1}^{\infty} G_\lambda(q; \bm0, \bm0). \]
Since $I_\lambda(\bm 0, \bm 0)$ is also the integral $I_1(\lambda; \bm 0)$ in the notation of Lemma \ref{l9}, that lemma gives
\[ 1 \lesssim I_\lambda(\bm 0, \bm 0) \lesssim 1. \]
The second claim of Theorem~\ref{thm2} is therefore an immediate consequence of \eqref{eq6.2} and the following result.

\begin{lemma}
Let $d \ge 7$ and $\lambda \in \mathbb N$ be even. The singular series $\mathfrak S(\lambda)$ is absolutely convergent and satisfies
\begin{equation}\label{eq6.3} 
1 \lesssim \mathfrak S(\lambda) \lesssim 1.  
\end{equation}
\end{lemma}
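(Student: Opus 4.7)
The plan is to separate the upper and lower bounds in \eqref{eq6.3}: the upper bound follows quickly from the exponential sum estimates already developed in Section~\ref{s2}, while the lower bound requires a $p$-adic density analysis of the singular series.

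For absolute convergence and the upper bound, Lemma~\ref{l1} gives $|g(q; \mathbf a, 0, 0)| \lesssim q^{-1} w_q(\mathbf a)$, so
\[ |G_\lambda(q; \bm 0, \bm 0)| \lesssim q^{-d} \sum_{\substack{1 \le \mathbf a \le q \\ (q, a_1, a_2, a_3) = 1}} w_q(\mathbf a)^d \lesssim \tau(q)^2 \, q^{2 - d/2}, \]
by Lemma~\ref{l2} applied with $s = d$. Since $d \ge 7$, the series $\sum_q \tau(q)^2 q^{2 - d/2}$ converges absolutely, yielding $\mathfrak S(\lambda) \lesssim 1$ uniformly in $\lambda$.

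For the lower bound, I would first verify that $q \mapsto G_\lambda(q; \bm 0, \bm 0)$ is multiplicative, which follows from the Chinese remainder theorem applied separately to the sums over $\mathbf a$ and over $(r,s)$ in \eqref{eq1.3}. Combined with absolute convergence, this yields the Euler product $\mathfrak S(\lambda) = \prod_p \sigma_p(\lambda)$ with $\sigma_p(\lambda) = \sum_{k \ge 0} G_\lambda(p^k; \bm 0, \bm 0)$. A routine orthogonality calculation (expanding each $g(p^k; \mathbf a, 0, 0)$ into its defining sum and grouping the resulting $\mathbf a$-sum by $\gcd(\mathbf a, p^k)$) then produces the classical identity
\[ \sum_{k=0}^{K} G_\lambda(p^k; \bm 0, \bm 0) = p^{K(3 - 2d)} M_\lambda(p^K), \]
where $M_\lambda(q)$ is the number of $(\mathbf u, \mathbf v) \in (\mathbb Z/q\mathbb Z)^{2d}$ satisfying $|\mathbf u|^2 \equiv |\mathbf v|^2 \equiv 2\mathbf u\cdot\mathbf v \equiv \lambda \pmod q$. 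Thus $\sigma_p(\lambda)$ is the $p$-adic density of solutions, and it suffices to exhibit a nonsingular $\mathbb Z_p$-point of the defining variety uniformly in $\lambda$ and invoke Hensel's lemma.

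For odd primes $p$, such a point is straightforward: since $d - 1 \ge 6$ and every residue is a sum of four squares mod $p$, one can find $\mathbf u \in \mathbb F_p^d$ with $|\mathbf u|^2 \equiv \lambda$ and $u_1 \not\equiv 0$, then use $2\mathbf u \cdot \mathbf v \equiv \lambda$ to solve for $v_1$ in terms of $v_2, \dots, v_d$, and finally solve $|\mathbf v|^2 \equiv \lambda$ in $d-1 \ge 6$ variables with $\mathbf v$ linearly independent from $\mathbf u$---linear independence being exactly the condition that the $3 \times 2d$ Jacobian of the defining system has full rank. A sharper version of the same analysis (noting that for $q = p$ prime one has $|g(p; \mathbf a, 0, 0)| \lesssim p^{-1}$ outright unless $p \mid a_1 a_3 - a_2^2$) upgrades this to $\sigma_p(\lambda) = 1 + O(p^{-(d-5)/2})$ with an absolute implied constant, so the tail of the Euler product beyond any fixed threshold is bounded below by a positive constant. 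The main obstacle will be the prime $p = 2$, where the Jacobian of the defining system vanishes identically mod $2$ and so a strengthened Hensel lemma is needed. The hypothesis that $\lambda$ is even enters precisely here---it is the condition needed for $2\mathbf u \cdot \mathbf v \equiv \lambda \pmod{2^k}$ to be solvable at all---and I would handle this case by a direct construction of a $2$-adic solution whose Jacobian has bounded $2$-adic valuation, using the abundance $d \ge 7$ to provide enough lifting freedom in the remaining coordinates. Together with uniform positive lower bounds on the finitely many small-prime factors, this yields $\mathfrak S(\lambda) \gtrsim 1$ as required.
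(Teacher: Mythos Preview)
Your proposal is correct and follows essentially the same route as the paper's sketch: both deduce absolute convergence and the upper bound from the Gauss-sum estimates of Lemmas~\ref{l1}--\ref{l2}, factor $\mathfrak S(\lambda)$ as an Euler product via the multiplicativity of $G_\lambda(q;\bm 0,\bm 0)$, identify each local factor as a $p$-adic solution density, and then bound the densities below by a Hensel lifting argument, with the prime $p=2$ singled out as the delicate case where the evenness of $\lambda$ is used. Your write-up is in fact somewhat more explicit than the paper's (you spell out the tail estimate $\sigma_p(\lambda)=1+O(p^{-(d-5)/2})$ needed to control the infinite product and give a concrete construction of a nonsingular $\mathbb F_p$-point for odd $p$), whereas the paper simply cites the relevant lemmas in Vaughan~\cite{Vaug97} and Raghavan~\cite{Ragh59} for these steps.
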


\begin{proof}[Sketch of proof]
The absolute convergence of $\mathfrak S(\lambda)$ and the upper bound in \eqref{eq6.3} follow from~\eqref{eq6.1}. As to the lower bound, we observe that similarly to Lemmas 2.10 and 2.11 in Vaughan \cite{Vaug97}, one can show that $G_\lambda(q) := G_\lambda(q; \bm0, \bm0)$ is multiplicative in $q$. Together with the absolute convergence of the series, this allows us to factor $\mathfrak S(\lambda)$ as an Euler product:
\[ \mathfrak S(\lambda) = \sum_{q=1}^{\infty} G_\lambda(q) = \prod_p \big( 1 + G_\lambda(p) + G_\lambda(p^2) + \cdots  \big) =: \prod_p T(p). \]
Similarly to the proofs of Theorem 2.4 and Lemma 2.12 in Vaughan \cite{Vaug97}, we then see that $T(p) \ge 0$ and
\[ T(p) =  \lim_{t \to \infty} p^{(3-2d)t}\nu_d(p^t; \lambda), \] 
where $\nu_d(q; \lambda)$ is the number of solutions $\mathbf{x,y} \in \mathbb Z_q^d$ of the simultaneous congruences
\[ \sum_{i=1}^d x_i^2 \equiv \sum_{i=1}^d y_i^2 \equiv 2\sum_{i=1}^d x_iy_i \equiv \lambda \pmod q. \] 
Therefore, it remains to show that, for $d \ge 7$, $t \ge 3$, and $\lambda$ even, we have
\[ \nu_d(p^t; \lambda) \ge \begin{cases}
p^{(t-1)(2d-3)} &\text{if } p > 2;\\
8 \cdot 2^{(t-2)(2d-3)} &\text{if } p=2. 
\end{cases} \]
The proof of this inequality is a standard Hensel-type argument that first constructs a solution modulo $p$ (resp., modulo $8$) and then lifts that solution to $p^{(t-1)(2d-3)}$ solutions modulo $p^t$ (resp., $2^{(t-2)(2d-3) + 3}$ solutions modulo $2^t$). We omit the details and refer the reader to Lemmas 2.13--2.15 in Vaughan \cite{Vaug97} and Lemmas 5--7 in Raghavan \cite{Ragh59}. In particular, Lemma 2.15 in \cite{Vaug97} is used to construct the initial solution modulo $p$ for odd $p$, while the proofs in \cite{Ragh59} are indicative of the lifting argument (though considerably more technical due to the more general setting in that paper). 
\end{proof}

\section{Counting equilateral triangles}
\label{s7}

Motivated by the sharpness example for the Erd\H{o}s distance problem, where distances are counted in an integer lattice, it is natural to count other point configurations in such a lattice. In fact, counting triangles in the integer lattice has allowed for the only non-trivial sharpness examples for Falconer type theorems for triangles \cite{GILP15}. One has to be careful with counting equilateral triangles in $\mathbb{Z}^d$, for there are none in $\mathbb{Z}^2$; however, they do exist in higher dimensions \cite{Bee92}. Characterizations have been given for equilateral triangles in $\mathbb{Z}^3$ \cite{Ion07,CI08} and in $\mathbb{Z}^4$ \cite{Ion15}. Moreover, in dimensions $d=3$ and $4$ and for small values of $n$, all integer equilateral triangles in the cube $[0,n]^d$ have been counted using the Ehrhart polynomial: see \cite{CI08,Ion08,Ion15}. In those papers, the authors make also conjectures for the growth of the total number of such triangles as $n \to \infty$. Using the count of equilateral triangles established in Theorem \ref{thm2}, we can answer such questions for $d\geq 7$ and obtain an asymptotic upper bound of $n^{3d-4}$. This is achieved by observing that equilateral triangles pinned at every point in $[0,n]^d \cap \mathbb{Z}^d$ must have a side length squared $\lambda^2\in\lbrace 1, 2, \ldots, dn^2 \rbrace$ and each such triangle appears no more often than $\lambda^{2(d-3)}$ times. If our upper bound were to hold all the way down to $d=3$, we would obtain an asymptotic upper bound of $n^5$, which would match the conjecture made in \cite{Ion08}.

Falconer type theorems for triangles, established through incidence estimates, allow for counting triangles in homogeneous and well distributed sets through a certain continuous to discrete transference mechanism \cite{HI05,IL05}. Dense subsets of the integer lattice, such as $[0,n]^d \cap \mathbb{Z}^d$, are stereotypical homogeneous and well distributed sets. Through the best incidence estimates \cite{GI12,GGIP15} a fixed equilateral triangle in $[0,n]^d \cap \mathbb{Z}^d$ appears asymptotically no more than $n^{3d - \frac{12d}{3d+1}}$ times when $d\geq 2$, while here, through Theorem \ref{thm2}, we get an asymptotic upper bound of $n^{3d-6}$ when $d\geq 7$, which is always smaller in corresponding dimensions. If the incidence bounds, and therefore the Falconer type theorem for triangles, held true for sets of Hausdorff dimension down to the threshold $\frac{d}{2}$, as is conjectured in the case of the distance, then the transference mechanisms would yield an upper bound coinciding with what we obtain in this paper. However, as shown in \cite{GILP15}, in the plane there is a sharpness threshold of $\frac{3}{2}$ as opposed to $1$, for the Falconer type theorem for triangles. Is there yet again different behavior in lower dimensions, or are equilateral triangles perhaps not the extremal cases for the incidence theorems?

\section{Final remarks}
\label{s8}

We now return to the question of relaxing the restrictions on $p$ and $d$ in Theorems \ref{thm1} and \ref{thm3}. A look back at Propositions \ref{p1}--\ref{p4} shows that the constraint $d \ge 9$ is imposed by the treatment of the minor arcs in Section 4, where it is made necessary by the use of Lemmas \ref{l10} and \ref{l11}. Were the supremum over $\xi$ not present on the right side of \eqref{eqp1.1}, we would have been able to refer to Lemma \ref{l8} instead of Lemma \ref{l7} to obtain versions of Proposition \ref{p1} and Theorems~\ref{thm1} and~\ref{thm3} for $d \ge 7$ and $p > \max\big( \frac {36}{d+12}, \frac {d+6}{d} \big)$.

One way to circumvent the above issue is to switch from a conventional application of the circle method to one where all the arcs are treated as major. This idea goes back to the work of Kloosterman \cite{Kloo26} on representations of the integers by diagonal forms in four variables; in that context, it is known as the {\em Kloosterman refinement} of the circle method. Here, we will use a very basic form of this idea to demonstrate how one can leverage a hypothetical strong version of inequality \eqref{eqp2.1} above to bound our maximal operator for $d \ge 7$ and $p > d/(d-3)$. 

\medskip

We retain all the notation introduced in Sections \ref{s3}--\ref{s5}, and in particular, the definitions of $\mathfrak M$ and $\mathfrak m$ in \eqref{eq2.8m}, though here we choose $P = N^{\theta}$, where $\theta < 1$ will be fixed shortly. Also, we write $\mathfrak L(\mathbf{q;a})$ for the major arc $\mathfrak N(N, N; \mathbf{q,a})$ corresponding to the choice $P = N$ and define
\[ \mathfrak L = \bigcup_{[q_1,q_2,q_3] \le P} \bigcup_{\substack{1 \le \mathbf a \le \mathbf q\\(a_i,q_i)=1}} \mathfrak L(\mathbf{q;a}). \] 
We now assume the following stronger version of inequality \eqref{eqp2.1}: 
If $1 \le \mathbf a \le \mathbf q \le N$, with $(a_i,q_i)=1$, and  $\bm\alpha \in \mathfrak L(\mathbf{q;a})$, then for all $\xi,\eta$,
\begin{equation}\label{eq8.1}
S_N(\bm\alpha; \xi, \eta) \lesssim \tilde w_q N^{2+\eps}(q + N^2|q\bm\alpha - \mathbf b|)^{-1/2} + N^{1+\eps},    
\end{equation}  
where 
\[ q = [q_1, q_2, q_3], \quad b_i = \frac {a_iq}{q_i} \quad (1 \le i \le 3). \]    
We will use this hypothesis to obtain an alternative version of Proposition \ref{p1} using an argument similar to that we used in Section \ref{s5.3} to establish Proposition \ref{p4}. 

Let $f \in \ell^p(\mathbb Z^d)$, $d/(d-3) < p \le 2$, with $\|f\|_p = 1$. Under the hypothesis \eqref{eq8.1}, we find similarly to \eqref{eqp4.3} that 
\begin{equation}\label{eq8.2} 
 \| F_N(\bm\alpha; f) \|_p \lesssim_\eps N^{2d+\eps}\big( q^{-2d/p'}w_q(\mathbf b)^{2d/p'}(1 + N^2|\bm\beta|)^{-d/p'} + N^{-2d/p'} \big),  
\end{equation} 
where $1/p' = 1 - 1/p$ and $\beta_i = \alpha_i - a_i/q_i = \alpha_i - b_i/q$. When $d/p' > 3$ (recall that this inequality is equivalent to $p > d/(d-3)$), we deduce that
\begin{equation}\label{eq8.3} 
\int_{\mathfrak L(\mathbf q; \mathbf a)} \| F_N(\bm\alpha; f) \|_p \, \dalpha \lesssim_\eps  N^{2d - 6 + \eps} \big( q^{-2d/p'}w_q(\mathbf b)^{2d/p'} + (q_1q_2q_3)^{-1}N^{3 -2d/p'} \big). 
\end{equation} 
By Dirichlet's theorem on Diophantine approximation, the arcs $\mathfrak L(\mathbf q; \mathbf a)$ with $1 \le \mathbf a \le \mathbf q \le N$ cover $\mathbb T^3$.
Hence, summing \eqref{eq8.3} over all choices of $\mathbf{q,a}$ with $q = [q_1,q_2,q_3] > P$, we get
\begin{align}\label{eq8.4} 
\int_{\mathfrak m \setminus \mathfrak L} \| F_N(\bm\alpha; f) \|_p \, \dalpha &\lesssim_\eps N^{2d-6+\eps} \bigg\{ \sum_{q > P} q^{-2d/p'}\sum_{\substack{1 \le \mathbf a \le q\\(q, a_1 , a_2, a_3) = 1}}w_q(\mathbf a)^{2d/p'}  + \sum_{1 \le \mathbf q \le N} N^{3 - 2d/p} \bigg\} \notag\\
&\lesssim_\eps N^{2d-6+\eps} \bigg\{ \sum_{q > P} q^{2-d/p'+\eps}  + N^{6 - 2d/p'} \bigg\}  \lesssim_\eps N^{2d-6+\eps}P^{3-d/p'}, 
\end{align} 
by an appeal to Lemma \ref{l2}. On the other hand, when $[q_1,q_2,q_3] \le P$, \eqref{eq8.2} gives
\begin{align*} 
&\int_{\mathfrak L(\mathbf q; \mathbf a) \setminus \mathfrak M(q; \mathbf b)} \| F_N(\bm\alpha; f) \|_p \, \dalpha \\
&\lesssim_\eps  N^{2d - 6 + \eps} \big( q^{-3-d/p'}w_q(\mathbf b)^{2d/p'}P^{3-d/p'} + (q_1q_2q_3)^{-1}N^{3 -2d/p'} \big),
\end{align*}
whence
\begin{align}\label{eq8.5} 
\int_{\mathfrak m \cap \mathfrak L} \| F_N(\bm\alpha; f) \|_p \, \dalpha \lesssim_\eps N^{2d-6+\eps} P^{3-d/p'}. 
\end{align} 

As in the proof of Proposition \ref{p4}, combining \eqref{eq8.4} and \eqref{eq8.5}, we obtain a version of Proposition \ref{p1} for 
\[ d \ge 7, \quad p > \frac d{d-3}, \quad \alpha_p = \frac {\theta}2 \left( \frac d{p'} - 3 \right). \] 
The proof of Proposition \ref{p3} is independent of the choice of $P$, and the above argument shows that Proposition \ref{p4} remains true for all $P \le N$. As to Proposition \ref{p2}, it is easy to check that its proof also works for any $P \le N$, though the value of $\beta_2(d)$ is impacted by the value of $\theta$. When $P = N^{\theta}$, the argument in Section \ref{s5.1} yields
\[ \beta_2(d) = \min \bigg( \frac {2 + \theta(d-9)}4, \frac 14 , \frac {\theta(d-6)}4 \bigg). \]
Therefore, when $d \ge 8$, any choice of $\theta \in [\frac 12, 1)$ will result in $\beta_2(d) = \frac 14$, while for $d = 7$, the optimal choice of $\theta$ is $\theta=\frac 23$, resulting in $\beta_2(7) = \frac 16$. We summarize these observations in the following proposition.

\begin{prop}\label{pC}
Assume that inequality \eqref{eq8.1} above holds for all $\xi,\eta \in \mathbb R$ and all $\bm\alpha \in \mathfrak L(\mathbf q; \mathbf a)$ with $1 \le \mathbf a \le \mathbf q \le N$, $(a_i,q_i) = 1$. Then the conclusions of Theorems \ref{thm1} and \ref{thm3} hold for $p \ge 7$ and $p > \frac d{d-3}$. Moreover, the value of $\delta_2$ in \eqref{eqi.6} can be chosen as $\delta_2 = \min\big( \frac 14, \frac 16(d-6) \big)$.
\end{prop}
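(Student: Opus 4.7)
The plan is to execute the Kloosterman-style refinement outlined in the preceding discussion, with \eqref{eq8.1} taking on the role that Lemma \ref{l4} plays for narrow arcs in the main argument. Fix $\theta \in (0,1)$ to be chosen later, set $P = N^\theta$, and retain the major/minor arc decomposition \eqref{eq2.8m} with this new cutoff. By Dirichlet's theorem on Diophantine approximation the boxes $\mathfrak L(\mathbf{q;a})$ with $1 \le \mathbf q \le N$ cover $\mathbb T^3$, and the hypothesised \eqref{eq8.1} supplies a uniform exponential sum bound on every such box.

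The new minor arc bound is then derived along the lines of the proof of Proposition \ref{p4} rather than via the $\ell^2$-based argument of Section \ref{s4}. Parseval--Plancherel together with \eqref{eq8.1} gives the pointwise $\ell^2$ estimate $\|F_N(\bm\alpha;f)\|_2 \lesssim_\eps (\tilde w_q^d q^{-d/2} N^{2d+\eps}(1+N^2|\bm\beta|)^{-d/2} + N^{d+\eps})\|f\|_2$ on each box, which interpolates with the trivial bound $\|F_N(\bm\alpha;f)\|_1 \lesssim N^{2d}\|f\|_1$ to yield \eqref{eq8.2} for all $p > d/(d-3)$. Integrating \eqref{eq8.2} over a single box --- with $d/p' > 3$ ensuring convergence of the relevant singular integral --- produces \eqref{eq8.3}, and summing over the appropriate ranges of $\mathbf{q,a}$ (using Lemma \ref{l2} to handle $\sum_{\mathbf a} w_q(\mathbf b)^{2d/p'}$) delivers the contributions \eqref{eq8.4} from $\mathfrak m \setminus \mathfrak L$ and \eqref{eq8.5} from $\mathfrak m \cap \mathfrak L$. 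Together these yield the analogue of Proposition \ref{p1} with $\alpha_p = (\theta/2)(d/p' - 3)$.

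The remaining pieces of the apparatus transfer with only routine adjustments. Proposition \ref{p3} is a statement about $M_\lambda$ in isolation and is independent of $P$. The proof of Proposition \ref{p4} uses only the integrability of $\Psi(\bm\alpha)^{-d/p'+\eps}$ on the major arcs and thus persists for any $P \le N$. For Proposition \ref{p2}, tracking the exponents in \eqref{eqp2.3}, \eqref{eqp2.5}, \eqref{eqp2.6} with $P = N^\theta$ gives three candidate contributions to $\beta_2(d)$, namely $(2+\theta(d-9))/4$, $1/4$, and $\theta(d-6)/4$. For $d \ge 8$ any $\theta \in [1/2,1)$ forces $\beta_2(d) = 1/4$, while for $d = 7$ the balance optimises at $\theta = 2/3$, giving $\beta_2(7) = 1/6$. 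Interpolating exactly as in the original proof of Theorem \ref{thm3} then delivers $\delta_2 = \min\bigl(1/4,(d-6)/6\bigr)$, and assembling these pieces yields the full conclusions of Theorems \ref{thm1} and \ref{thm3} in the stated range.

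The principal difficulty is not in executing the plan --- given \eqref{eq8.1} the argument reduces to a careful reindexing of work already carried out --- but in the hypothesis itself, since \eqref{eq8.1} is currently known only on the narrow arcs $\mathfrak M(q;\mathbf b)$ of small denominator and is precisely the sort of Kloosterman-type refinement, in the intermediate regime $P < [q_1,q_2,q_3] \le N$ for the joint quadratic form $\bm\phi(x,y)$, that appears to be beyond reach of current technology.
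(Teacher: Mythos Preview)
Your proposal is correct and follows essentially the same route as the paper's own argument: the discussion preceding Proposition~\ref{pC} in Section~\ref{s8} carries out exactly the Kloosterman-style refinement you describe, deriving \eqref{eq8.2}--\eqref{eq8.5} in the same way and optimising $\theta$ identically to obtain $\beta_2(d)$. The only minor imprecision is that \eqref{eq8.2} itself holds for all $1 < p \le 2$ by interpolation, with the restriction $p > d/(d-3)$ entering only at the integration step \eqref{eq8.3}; but you acknowledge this in the next clause, so there is no real gap.
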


We remark that while our hypothetical bound \eqref{eq8.1} is quite strong and is not even close to what is presently known about $S_N(\bm\alpha; \xi, \eta)$, it represents a reasonable conjecture. Indeed, a strong form of the analogous bound for the one-dimensional Weyl sum
\[ \sum_{|x| \le N} e(\alpha x^2 + \xi x) \]
is known from the work of Vaughan \cite{Vaug09}.

\bibliographystyle{amsplain}

\end{document}